\newtheorem{prop}{Proposition}[section]
\newtheorem{defi}[prop]{Definition}
\newtheorem{conj}[prop]{Conjecture}
\newtheorem{lem}[prop]{Lemma}
\newtheorem{cor}[prop]{Corollary}
\newtheorem{remar}[prop]{Remark}
\newcommand{\Aut}{{\mathrm {Aut}}}
\def\id{\mathop{\mathrm{ id}}\nolimits}
\renewcommand{\Im}{{\mathrm {Im}}}
\newcommand{\ord}{{\mathrm {ord}}}
\newcommand{\Hom}{{\mathrm {Hom}}}
\newcommand{\Norm}{{\mathrm {Norm}}}
\newcommand{\tr}{{\mathrm {tr}}}
\newcommand{\Tr}{{\mathrm {Tr}}}
\newcommand{\Sym}{{\mathrm {Sym}}}
\newcommand{\crys}{{\mathrm {crys}}}
\newcommand{\spin}{{\mathrm {spin}}}
\newcommand{\dR}{{\mathrm {dR}}}
\newcommand{\Frob}{{\mathrm {Frob}}}
\newcommand{\Gal}{\mathrm {Gal}}
\newcommand{\res}{{\mathrm {res}}}
\newcommand{\AAA}{{\mathbb A}}
\newcommand{\CC}{{\mathbb C}}
\newcommand{\RR}{{\mathbb R}}
\newcommand{\QQ}{{\mathbb Q}}
\newcommand{\ZZ}{{\mathbb Z}}
\newcommand{\HH}{{\mathfrak H}}
\newcommand{\FFF}{{\mathcal F}}
\newcommand{\MMM}{{\mathfrak M}}
\newcommand{\FF}{{\mathbb F}}
\newcommand{\GL}{\mathrm {GL}}
\newcommand{\Sp}{\mathrm {Sp}}
\newcommand{\GSp}{\mathrm {GSp}}
\newcommand{\St}{\mathrm {St}}
\newcommand{\Qbar}{\overline{\mathbb Q}}
\newcommand{\rhobar}{\overline{\rho}}
\newcommand{\Fbar}{\overline{\mathbb F}}
\newcommand{\Wscr}{{\mathcal W}}
\newcommand{\disc}{{\mathrm disc}}
\newcommand{\bA}{{\mathbb A}}
\newcommand{\begeq}{\begin{equation}}
\newcommand{\zzendeq}{\end{equation}}
\newcommand{\R}{\mathbb{R}}
\newcommand{\Unn}{U_{\nu_1,\nu_2}}
\newcommand{\Q}{\mathbb{Q}}
\newcommand{\0}{\boldsymbol{0}}
\newcommand{\Z}{\mathbb{Z}}
\newcommand{\N}{\mathbb{N}}
\newcommand{\C}{\mathbb{C}}
\newcommand{\bQ}{\bold{Q}}
\newcommand{\cA}{{\mathcal A}}
\newcommand{\Ascr}{{\mathcal A}}
\newcommand{\by}{{\bf y}}
\newcommand{\bx}{{\bf x}}
  \newcommand{\textcyr}[1]{%
    {\fontencoding{OT2}\fontfamily{wncyr}\fontseries{m}\fontshape{n}%
     \selectfont #1}}
\newcommand{\Sha}{{\mbox{\textcyr{Sh}}}}
\newcommand{\ee}{{\mathbf e}}
\begin{document}
\title{Yoshida lifts and Selmer groups}
\author{Siegfried B\"ocherer}
\author{Neil Dummigan}
\author{Rainer Schulze-Pillot}
\date{September 28th, 2011.}
\subjclass{11F46, 11F67, 11F80, 11F33, 11G40}
\address{Kunzenhof 4B\\
79117 Freiburg, Germany}
\email{boecherer@math.uni-mannheim.de}
\address{University of Sheffield\\ School of Mathematics and Statistics\\
Hicks Building\\ Hounsfield Road\\ Sheffield, S3 7RH\\
U.K.}

\email{n.p.dummigan@shef.ac.uk}
 \address{FR 6.1 Mathematik\\Universit\"at des Saarlandes\\
 Postfach 151150\\D-66041 Saarbr\"ucken\\Germany}
 \email{schulzep@math.uni-sb.de}

\begin{abstract}
Let $f$ and $g$, of weights $k'>k\geq 2$, be normalised newforms for
$\Gamma_0(N)$, for square-free $N>1$, such that, for each
Atkin-Lehner involution, the eigenvalues of $f$ and $g$ are equal.
Let $\lambda\mid\ell$ be a large prime divisor of the algebraic part
of the near-central critical value $L(f\otimes g,\frac{k+k'-2}{2})$.
Under certain hypotheses, we prove that $\lambda$ is the modulus of
a congruence between the Hecke eigenvalues of a genus-two Yoshida
lift of (Jacquet-Langlands correspondents of) $f$ and $g$
(vector-valued in general), and a non-endoscopic genus-two cusp
form. In pursuit of this we also give a precise pullback formula for
a genus-four Eisenstein series, and a general formula for the
Petersson norm of a Yoshida lift.

Given such a congruence, using the $4$-dimensional $\lambda$-adic
Galois representation attached to a genus-two cusp form, we produce,
in an appropriate Selmer group, an element of order $\lambda$, as
required by the Bloch-Kato conjecture on values of $L$-functions.
\end{abstract}

\maketitle
\section{Introduction}
This paper is about congruences between modular forms, modulo large
prime divisors of normalised critical values of $L$-functions. The
first instance of this might be considered to be Ramanujan's
congruence modulo $691$ between the Hecke eigenvalues of the cusp
form $\Delta$ and an Eisenstein series of weight $12$ for
$SL_2(\ZZ)$, the prime $691$ occurring in the critical value
$\zeta(12)$. Congruences modulo $p$ between Eisenstein series and
cusp forms (now of weight $2$ and level $p$) were used by Ribet
\cite{R} to prove his converse to Herbrand's theorem. Interpreting
the congruence as a reducibility modulo $p$ of the $2$-dimensional
Galois representation attached to the cusp form, he used the
non-trivial extension of $1$-dimensional factors to construct
elements of order $p$ in the class group of $\QQ(\zeta_p)$. Mazur
and Wiles \cite{MW} developed this idea further in their proof of
Iwasawa's main conjecture. When Bloch and Kato \cite{BK} proved most
of their conjecture in the case of the Riemann zeta function, the
Mazur-Wiles theorem was the main ingredient.

Let $f$ and $g$, of weights $k'>k\geq 2$, be normalised newforms for
$\Gamma_0(N)$, for square-free $N>1$, such that, for each
Atkin-Lehner involution, the eigenvalues of $f$ and $g$ are equal.
Let $\lambda\mid\ell$ be a large prime divisor of the algebraic part
of the near-central critical value $L(f\otimes g,\frac{k+k'-2}{2})$
(or equivalently of its partner $L(f\otimes g,\frac{k+k'}{2})$). In
this paper, we seek a congruence modulo $\lambda$ between the Hecke
eigenvalues of a Yoshida lift $F=F_{f,g}$, and some other genus-$2$
Hecke eigenform $G$, of the same weight $\Sym^j\otimes
\det^{\kappa}$, where $j=k-2$ and $\kappa=2+\frac{k'-k}{2}$, and
level $\Gamma_0^{(2)}(N)$. (See \S 1.1 and later sections for
definitions and notation.) Proposition \ref{maincong} (and Corollary
\ref{Nprime}) is what we are able to prove. If $p$ is any prime
$p\nmid\ell N$ (where $\lambda\mid \ell$) and $\mu_G(p)$ is the
eigenvalue of the Hecke operator $T(p)$ acting on $G$, then the
congruence is
$$\mu_G(p)\equiv a_p(f)+p^{(k'-k)/2}a_p(g)\pmod{\lambda}.$$

Our proof is modelled on Katsurada's approach to proving congruences
between Saito-Kurokawa lifts and non-lifts \cite{Ka}, modulo
divisors of the near-central critical values of Hecke $L$-functions
of genus-$1$ cuspidal eigenforms of level $1$. Thus we consider a
``pullback formula'' for the restriction to $\HH_2\times\HH_2$ of a
genus-$4$, Eisenstein series (of weight $4$) to which a certain
differential operator has been applied. The coefficient of $F\otimes
F$ is some constant times a value of the standard $L$-function of
$F$, divided by the Petersson norm of $F$.

Section 6 contains a proof of the required pullback formula
(\ref{pullback4}) (derived, using also (15), from the more general
(9)), using differential operators from \cite{BoeFJII} and
\cite{BSY}, and taking care to determine the precise constants
occurring. Section 8 contains the proof of a formula for the
Petersson norm of the Yoshida lift $F$, generalising \cite{BS1},
which dealt with the analogous case where $k'=k=2$ and $F$ is
scalar-valued of weight $\kappa=2$. This proof uses another, more
subtle pullback formula
(\ref{ready_to_use_pullback_for_F_weight_2}), involving an
Eisenstein series of genus $4$ and weight $2$, also provided by
Section 6. The value $L(f\otimes g,\frac{k+k'}{2})$ thereby appears
as a factor in the formula for the Petersson norm of the Yoshida
lift, thus introducing $\lambda$ into a denominator in the pullback
formula referred to in the previous paragraph. The congruence is
then proved by some application of Hecke operators to both sides.

For this we need to know the integrality at $\lambda$ of the
left-hand-side (dealt with in Section 7), and, more problematically,
that some Fourier coefficient of a canonical scaling of the Yoshida
lift $F$ is not divisible by $\lambda$. (At this point Katsurada was
able to use an explicit formula for the Fourier coefficients of a
Saito-Kurokawa lift.) What we need on Fourier coefficients of
Yoshida lifts can be reduced to a weak condition on non-divisibility
by $\lambda$ of certain normalised $L$-values, in the case that $N$
is prime, Atkin-Lehner eigenvalue $\epsilon_N=-1$ and $k/2, k'/2$
are odd, using an averaging formula from \cite{BS5}. This condition
may be checked explicitly using a formula of Gross and Zagier. In
his thesis \cite{Ji}, Johnson Jia has worked out a different
approach to the problem of Fourier coefficients of Yoshida lifts mod
$\lambda$, in the scalar-valued case.

Brown \cite{Br} used the Galois interpretation of congruences (of
Hecke eigenvalues) between Saito-Kurokawa lifts and non-lifts, to
confirm a prediction of the Bloch-Kato conjecture. Likewise, in the
earlier sections of this paper we use congruences between Yoshida
lifts and non-lifts to produce non-zero elements of
$\lambda$-torsion in the appropriate Bloch-Kato Selmer group. (See
Proposition \ref{selmer}.) The required cohomology classes come from
non-trivial extensions inside the mod $\lambda$ reduction of
Weissauer's $4$-dimensional Galois representation attached to $G$.
This mod $\lambda$ representation is reducible thanks to the
congruence.

The work of Brown is easily extended to other (not necessarily
near-central) critical values of $L_f(s)$ if one assumes a
conjecture of Harder \cite{Ha, vdG} on the existence of congruences
involving vector-valued genus-$2$ cusp forms. It is not possible
likewise to extend the present work to other critical values of the
tensor-product $L$-function using genus-$2$ Siegel modular forms.
The problem is that we have two fixed parameters $k'$ and $k$, not
allowing any freedom to vary $j$ and $\kappa$. This is explained in
more detail at the end of \cite{Du2}.

M. Agarwal and K. Klosin, independently of us, at the suggestion of
C. Skinner, worked on using congruences between Yoshida lifts and
non-lifts to construct elements in Selmer groups, to support the
Bloch-Kato conjecture for tensor product $L$-functions at the near
central point \cite{AK}. Their approach to proving such congruences
is different, resulting in different conditions, and covers the
scalar-valued case ($k=2$). They use a Siegel-Eisenstein series with
a character, as in \cite{Br}, and take pains to avoid our assumption
(in Lemma \ref{endocap} and Proposition \ref{selmer}) that $\lambda$
is not a congruence prime for $f$ or $g$, at the cost of restricting
$k'$ to be $10$ or $14$.

{\em Acknowledgements. } We thank M. Agarwal, T. Berger, J.
Bergstr\"om, J. Jia, H. Katsurada, K. Klosin, C. Poor and D. Yuen
for helpful communications. We thank also M. Chida for pointing out
that \cite{We4} allows us to eliminate an unnecessary hypothesis.

\subsection{Definitions and notation} Let $\HH_n$ be the Siegel upper half plane of $n$ by $n$ complex
symmetric matrices with positive-definite imaginary part. Let
$\Gamma^{(n)}:=\Sp(n,\ZZ)=\Sp_{2n}(\ZZ)=\{M\in \GL_{2n}(\ZZ):
{}^tMJM=J\}$, where $J=\begin{pmatrix} 0_n& I_n\\-I_n &
0_n\end{pmatrix}$. For $M=\begin{bmatrix} A & B\\C &
D\end{bmatrix}\in \Gamma^{(n)}$ and $Z\in \HH_n$, let
$M(Z):=(AZ+B)(CZ+D)^{-1}$ and $J(M,Z):=CZ+D$. Let
$\Gamma_0^{(n)}(N)$ be the subgroup of $\Gamma^{(n)}$ defined by the
condition $N\mid C$. Let $V$ be the space of a finite-dimensional
representation $\rho$ of $\GL(n,\CC)$. A holomorphic function
$f:\HH_n\rightarrow V$ is said to belong to the space
$M_{\rho}(\Gamma_0^{(n)}(N))$ of Siegel modular forms of genus $n$
and weight $\rho$, for $\Gamma_0^{(n)}(N)$, if
$$f(M(Z))=\rho(J(M,Z))f(Z)\,\,\,\,\,\,\,\,\forall
M\in\Gamma_0^{(n)}(N),\, Z\in\HH_n.$$ In other words, $f|M=f$ for
all $M\in\Gamma_0^{(n)}(N)$, where
$(f|M)(Z):=\rho(J(M,Z))^{-1}f(M(Z))$ for $M\in \Sp_{2n}(\ZZ)$. Such
an $f$ has a Fourier expansion
$$f(Z)=\sum_{S\geq 0}a(S)\ee(\Tr(SZ))=\sum_{S\geq 0}a(S,f)\ee(\Tr(SZ)),$$
where the sum is over all positive semi-definite half-integral
matrices, and $\ee(z):=e^{2\pi i z}$.

Denote by $S_{\rho}(\Gamma_0^{(n)}(N))$, the subspace of cusp forms,
those that vanish at the boundary. They are also characterised by
the condition that, for all $M\in\Sp_{2n}(\ZZ)$, $a(S,f|M)=0$ unless
$S$ is positive-definite. When $\rho$ is of the special form
$\det^k\otimes\Sym^j(\CC^n)$ (where $\CC^n$ is the standard
representation of $\GL_n(\CC)$), the Petersson inner product will be
as in \S 2 of \cite{Koz}, and when also $n=2$, the Hecke operators
$T(m)$, for $(m,N)=1$, will be defined as in \S 2 of \cite{Ar},
replacing $\Sp_4(\ZZ)$ by $\Gamma_0^{(2)}(N)$. When $j=0$, we are
dealing with the usual scalar-valued Siegel cusp forms of weight
$k$. For a Hecke eigenform $F$, the incomplete spinor and standard
$L$-functions $L^{(N)}(F,s,\spin)$ and $L^{(N)}(F,s,\St)$ may be
defined in terms of Satake parameters as in \cite{An}, see also \S
20 of \cite{vdG}.
\section{Critical values of the tensor product $L$-function}
Let $f\in S_{k'}(\Gamma_0(N)), g\in S_k(\Gamma_0(N))$ be normalised
newforms (with $k'>k\geq 2$), $K$ some number field containing all
the Hecke eigenvalues of $f$ and $g$. Attached to $f$ is a
``premotivic structure'' $M_f$ over $\QQ$ with coefficients in $K$.
Thus there are $2$-dimensional $K$-vector spaces $M_{f,B}$ and
$M_{f,\dR}$ (the Betti and de Rham realisations) and, for each
finite prime $\lambda$ of $O_K$, a $2$-dimensional
$K_{\lambda}$-vector space $M_{f,\lambda}$, the $\lambda$-adic
realisation. These come with various structures and comparison
isomorphisms, such as $M_{f,B}\otimes_K K_{\lambda}\simeq
M_{f,\lambda}$. See 1.1.1 of \cite{DFG} for the precise definition
of a premotivic structure, and 1.6.2 of \cite{DFG} for the
construction of $M_f$, which uses the cohomology, with, in general,
non-constant coefficients, of modular curves, and pieces cut out
using Hecke correspondences.

On $M_{f,B}$ there is an action of $\Gal(\CC/\RR)$, and the
eigenspaces $M_{f,B}^{\pm}$ are $1$-dimensional. On $M_{f,\dR}$
there is a decreasing filtration, with $F^j$ a $1$-dimensional space
precisely for $1\leq j\leq k'-1$. The de Rham isomorphism
$M_{f,B}\otimes_K\CC\simeq M_{f,\dR}\otimes_K\CC$ induces
isomorphisms between $M_{f,B}^{\pm}\otimes\CC$ and
$(M_{f,\dR}/F)\otimes\CC$, where $F:=F^1=\ldots=F^{k'-1}$. Define
$\omega^{\pm}$ to be the determinants of these isomorphisms. These
depend on the choice of $K$-bases for $M_{f,B}^{\pm}$ and
$M_{f,\dR}/F$, so should be viewed as elements of
$\CC^{\times}/K^{\times}$. In exactly the same way there is also a
premotivic structure $M_g$, but since $k'>k$, it turns out that it
is the periods of $f$ that will show up in the formula for the
periods of the rank-$4$ premotivic structure $M_{f\otimes
g}:=M_f\otimes M_g$.

From the above properties of $M_f$ and $M_g$, one easily obtains the
following properties of $M_{f\otimes g}$. The eigenspaces
$M^{\pm}_{f\otimes g,B}$ are $2$-dimensional. On $M_{f\otimes
g,\dR}$ there is a decreasing filtration, with $F^t$ a
$2$-dimensional space precisely for $k\leq t\leq k'-1$. The de Rham
isomorphism $M_{f\otimes g,B}\otimes_K\CC\simeq M_{f\otimes
g,\dR}\otimes_K\CC$ induces an isomorphism between
$M^{\pm}_{f\otimes g,B}\otimes\CC$ and $(M_{f\otimes
g,\dR}/F')\otimes\CC$, where $F':=F^{k}=\ldots=F^{k'-1}$. Define
$\Omega^{\pm}\in\CC^{\times}/K^{\times}$ to be the determinants of
these isomorphisms.

For use in the next section, we shall choose an $O_K$-submodule
$\MMM_{f,B}$, generating $M_{f,B}$ over $K$, but not necessarily
free, and likewise an $O_K[1/S]$-submodule $\MMM_{f,\dR}$,
generating $M_{f,\dR}$ over $K$, where $S$ is the set of primes
dividing $N(k'!)$. We take these as in 1.6.2 of \cite{DFG}. They are
part of the ``$S$-integral premotivic structure'' associated to $f$,
and are defined using integral models and integral coefficients.
Actually, it will be convenient to enlarge $S$ so that $O_K[1/S]$ is
a principal ideal domain, then replace $\MMM_{f,B}$ and
$\MMM_{f,\dR}$ by their tensor products with the new $O_K[1/S]$.
These will now be free, as will be any submodules, and the quotients
we consider. Choosing bases, and using these to calculate the above
determinants, we pin down the values of $\omega^{\pm}$ (up to
$S$-units). Setting $\MMM_{f\otimes
g,B}:=\MMM_{f,B}\otimes\MMM_{g,B}$ and $\MMM_{f\otimes
g,\dR}:=\MMM_{f,\dR}\otimes\MMM_{g,\dR}$, similarly we pin down
$\Omega^{\pm}$ (up to $S$-units). We just have to imagine not
including in $S$ any prime we care about.

For each prime $\lambda$ of $O_K$ (say $\lambda\mid\ell$), the
$\lambda$-adic realisation $M_{f,\lambda}$ comes with a continuous
linear action of $\Gal(\Qbar/\QQ)$. For each prime number $p\neq
\ell$, the restriction to $\Gal(\Qbar_p/\QQ_p)$ may be used to
define a local $L$-factor
$[\det(I-\Frob_p^{-1}p^{-s}|M_{f,\lambda}^{I_p})]^{-1}$ (which turns
out to be independent of $\lambda$), and the Euler product is
precisely $L_f(s)$. (Here $I_p$ is an inertia subgroup at $p$, and
$\Frob_p$ is a Frobenius element reducing to the generating
$p^{\mathrm{th}}$-power automorphism in $\Gal(\Fbar_p/\FF_p)$.) In
exactly the same way we may use the Galois representation
$M_{f\otimes g,\lambda}=M_{f,\lambda}\otimes M_{g,\lambda}$ to
define the tensor product $L$-function $L_{f\otimes g}(s)$.
According to Deligne's conjecture \cite{De}, for each integer $t$ in
the critical range $k\leq t\leq k'-1$,
$$L_{f\otimes g}(t)/\Omega(t)\in K,$$ where $\Omega(t)=(2\pi
i)^{2t}\Omega^{(-1)^t}$ is the Deligne period for the Tate twist
$M_{f\otimes g}(t)$.

It is more convenient to use $\langle f,f\rangle$ than
$\Omega^{\pm}$, so we consider the relation between the two.
Calculating as in (5.18) of \cite{Hi}, using Lemma 5.1.6 of
\cite{De} and the latter part of 1.5.1 of \cite{DFG}, one recovers
the well-known fact that, up to $S$-units,
\begin{equation}\label{omeg}\langle
f,f\rangle=i^{k'-1}\omega^+\omega^- c(f),\end{equation} where
$c(f)$, the ``cohomology congruence ideal'', is, as the cup-product
of basis elements for $\MMM_{f,B}$, an integral ideal. Moreover,
calculating as in Lemma 5.1 of \cite{Du}, we find that
$$\Omega^+=\Omega^-=2(2\pi i)^{1-k}\omega^+\omega^-.$$
Hence Deligne's conjecture is equivalent to
$$\frac{L_{f\otimes g}(t)}{\pi^{2t-(k-1)}\langle f,f\rangle}\,\in
K$$ (for each integer $k\leq t\leq k'-1$). This is known to be true,
using Shimura's Rankin-Selberg integral for $L_{f\otimes g}(s)$
\cite{Sh}. In the next section we consider the integral refinement
of Deligne's conjecture.

\section{The Bloch-Kato conjecture}
We shall need the elements $\MMM_{f,\lambda}$ of the $S$-integral
premotivic structure, for each prime $\lambda$ of $O_K$. These are
as in 1.6.2 of \cite{DFG}. For each $\lambda$, $\MMM_{f,\lambda}$ is
a $\Gal(\Qbar/\QQ)$-stable $O_{\lambda}$-lattice in $M_{f,\lambda}$.
Similarly we have $\MMM_{g,\lambda}$, and $\MMM_{f\otimes
g,\lambda}:=\MMM_{f,\lambda}\otimes\MMM_{g,\lambda}$.

Let $A_{\lambda}:=M_{f\otimes g,\lambda}/\MMM_{f\otimes g,\lambda}$,
and $A[\lambda]:=A_{\lambda}[\lambda]$ the $\lambda$-torsion
subgroup. Let $\check{A_{\lambda}}:=\check{M}_{f\otimes
g,\lambda}/\check{\MMM}_{f\otimes g,\lambda}$, where
$\check{M}_{f\otimes g,\lambda}$ and $\check{\MMM}_{f\otimes
g,\lambda}$ are the vector space and $O_{\lambda}$-lattice dual to
$M_{f\otimes g,\lambda}$ and $\MMM_{f\otimes g,\lambda}$
respectively, with the natural $\Gal(\Qbar/\QQ)$-action. Let
$A:=\oplus_{\lambda}A_{\lambda}$, etc.

Following \cite{BK} (Section 3), for $p\neq \ell$ (where
$\lambda\mid\ell$, including $p=\infty$) let
$$H^1_f(\QQ_p,M_{f\otimes g,\lambda}(t))=\ker\bigl(H^1(D_p,M_{f\otimes g,\lambda}(t))
\rightarrow H^1(I_p,M_{f\otimes g,\lambda}(t))\bigr).$$ Here $D_p$
is a decomposition subgroup at a prime above~$p$, $I_p$ is the
inertia subgroup, and $M_{f\otimes g,\lambda}(t)$ is a Tate twist of
$M_{f\otimes g,\lambda}$, etc. The cohomology is for continuous
cocycles and coboundaries. For $p=\ell$ let
$$H^1_f(\QQ_{\ell},M_{f\otimes g,\lambda}(t))=
\ker\bigl(H^1(D_{\ell},M_{f\otimes g,\lambda}(t))\rightarrow
H^1(D_{\ell},M_{f\otimes g,\lambda}(t)\otimes_{\QQ_{\ell}}
B_{\crys})\bigr).$$ (See Section 1 of \cite{BK} or \S 2 of
\cite{Fo1} for the definition of Fontaine's ring $B_{\crys}$.) Let
$H^1_f(\QQ,M_{f\otimes g,\lambda}(t))$ be the subspace of those
elements of $H^1(\QQ,M_{f\otimes g,\lambda}(t))$ that, for all
primes~$p$, have local restriction lying in $H^1_f(\QQ_p,M_{f\otimes
g,\lambda}(t))$. There is a natural exact sequence
$$\begin{CD}0@>>>\MMM_{f\otimes g,\lambda}(t)@>>>M_{f\otimes g,\lambda}(t)@>\pi>>A_{\lambda}(t)@>>>0
\end{CD}.$$
Let $H^1_f(\QQ_p,A_{\lambda}(t))=\pi_*H^1_f(\QQ_p,M_{f\otimes
g,\lambda}(t))$. Define the ${\lambda}$-Selmer group
$H^1_f(\QQ,A_{\lambda}(t))$ to be the subgroup of elements of
$H^1(\QQ,A_{\lambda}(t))$ whose local restrictions lie in
$H^1_f(\QQ_p,A_{\lambda}(t))$ for all primes $p$. Note that the
condition at $p=\infty$ is superfluous unless $\ell=2$. Define the
Shafarevich-Tate group
$$\Sha(t)=\bigoplus_{\lambda}{H^1_f(\QQ,A_{\lambda}(t))
\over\pi_*H^1_f(\QQ,M_{f\otimes g,\lambda}(t))}.$$

Tamagawa factors $c_p(t)$ may be defined as in 11.3 of \cite{Fo}
(where the notation is $\mathrm{Tam}^0\ldots$). The $\lambda$ part
(for $\ell\neq p$) is trivial if $A_{\lambda}^{I_p}$ is divisible
(for example if $p\nmid N$). The following is equivalent to the
relevant cases of the Fontaine-Perrin-Riou extension of the
Bloch-Kato conjecture to arbitrary weights (i.e. not just points
right of the centre) and not-necessarily-rational coefficients.
(This follows from 11.4 of \cite{Fo}.) Note that by ``$\#$'' we
really mean the Fitting ideal.

\begin{conj}\label{bk}
Suppose that $k\leq t\leq k'-1$. Then we have the following equality
of fractional ideals of $O_K[1/S]$:
\begin{equation}\label{blka}\frac{L_{f\otimes g}(t)}{\Omega(t)}=\frac{\prod_{p\leq
\infty}c_p(t)~\#\Sha(t)}{\#H^0(\QQ,A(t))\#H^0(\QQ,\check{A}(1-t))}.\end{equation}
\end{conj}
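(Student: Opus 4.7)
The plan is to attack the equality of fractional ideals by proving two matching divisibilities at each prime $\lambda$ of $O_K[1/S]$ in turn. The overall strategy combines an Euler-system upper bound on $\#\Sha(t)$ with a congruence-theoretic lower bound coming from the very Yoshida-lift constructions developed elsewhere in this paper, and then accounts separately for the Tamagawa factors $c_p(t)$ and the $H^0$ denominator terms coming from invariants and coinvariants of $A(t)$.

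For the upper bound (RHS divides LHS), I would invoke the Beilinson-Flach Euler system for the Rankin-Selberg convolution $M_{f\otimes g}$, in the form constructed by Lei-Loeffler-Zerbes and $p$-adically interpolated by Kings-Loeffler-Zerbes across the relevant weight range. Under standard hypotheses (irreducibility of $M_{f\otimes g,\lambda}$, big-image, Panchishkin condition at $\ell$, non-vanishing of the L-value), a Kolyvagin-style derivation at auxiliary primes yields a bound on $\#H^1_f(\QQ, A_\lambda(t))$ in terms of the algebraic L-value, once one translates between the Euler system and the critical value via the explicit reciprocity law of Kings-Loeffler-Zerbes and the interpolation property of the $p$-adic Rankin-Selberg L-function.

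For the opposite divisibility (LHS divides RHS), I would apply the Yoshida-lift congruence method of the present paper to each prime $\lambda$ dividing the algebraic part of $L_{f\otimes g}(t)/\Omega(t)$; the reducibility modulo $\lambda$ of Weissauer's four-dimensional Galois representation attached to the non-lift $G$ produces, by a Ribet-style lattice argument, a non-trivial class in $H^1_f(\QQ, A_\lambda(t))$. To reach \emph{every} critical $t$ in $[k, k'-1]$ and not only the near-central point, I would combine this with the Harder-type conjectural congruences indicated at the end of the introduction, realised via vector-valued Siegel cusp forms of suitable weights $\Sym^j\otimes\det^{\kappa}$ lifted by a Yoshida-type construction, then invoke the analogous pullback and Petersson-norm formulas developed in Sections 6 and 8 to pin down the exact $\lambda$-adic valuations rather than just existence. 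The local Tamagawa factors at primes $p \mid N$ must be computed directly from the restrictions of $M_f$ and $M_g$ to decomposition groups, using the local-global compatibility for modular forms and Fontaine's machinery at $p=\ell$.

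The main obstacle, by a wide margin, will be upgrading the mere existence of a non-trivial $\lambda$-torsion class to an equality of lengths (Fitting ideal lengths, as the conjecture demands): this amounts to an $R=\TT$ theorem on the appropriate localisation of the genus-two Hecke algebra acting on $S_\rho(\Gamma_0^{(2)}(N))$, matching the reducibility ideal of the deformation ring with the congruence ideal on the Hecke side. A secondary obstacle is the construction of the Rankin-Selberg Euler system in the vector-valued / non-ordinary setting one needs to cover the full critical range; both obstacles already force the authors of the present paper to settle, in their actual results, for the weaker statement that $\lambda$ divides $\#\Sha(t_0)$ at the single near-central point $t_0=(k+k'-2)/2$.
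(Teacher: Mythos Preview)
The statement you are attempting to prove is labelled \textbf{Conjecture} \ref{bk} in the paper, not a theorem or proposition. The paper does not contain a proof of it, nor does it claim one: this is the Bloch--Kato conjecture (in its Fontaine--Perrin-Riou formulation) for the tensor-product motive $M_{f\otimes g}$, and it is an open problem. The paper's contribution is to provide \emph{evidence} for one direction of the conjecture at the single near-central point $t=(k'+k-2)/2$, by constructing a non-zero element of the Selmer group under several hypotheses (Proposition \ref{selmer}). Your own final paragraph essentially concedes this.

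Your proposal therefore does not have a genuine gap so much as a category error: there is no ``paper's own proof'' to compare against. Beyond that, the strategy you sketch would not, as it stands, yield a proof either. The lower-bound half explicitly invokes Harder-type congruences that are themselves conjectural (and, as the paper notes at the end of the introduction, cannot be realised by genus-$2$ Yoshida lifts away from the near-central point because the weight parameters $j,\kappa$ are rigidly determined by $k,k'$). The upper-bound half appeals to Beilinson--Flach Euler systems, which do give bounds in many cases but require hypotheses (ordinarity or Panchishkin conditions, big image, non-anomalous primes) that you have not verified hold uniformly for all $\lambda$ and all $t$ in the critical range; and the passage from a Selmer bound to the exact Fitting-ideal equality in (\ref{blka}), including the precise Tamagawa factors, is not covered by the existing Euler-system literature for this motive. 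The $R=\TT$ step you flag as the main obstacle is indeed wide open for $\GSp_4$ in this generality.
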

In other words,
\begin{equation}\label{blka2}\frac{L_{f\otimes g}(t)}{\pi^{2t-(k-1)}\langle f,f\rangle}=\frac{\prod_{p\leq
\infty}c_p(t)~\#\Sha(t)}{\#H^0(\QQ,A(t))\#H^0(\QQ,\check{A}(1-t))c(f)}.
\end{equation}

Let $f=\sum a_n(f)q^n$. Let
$\rho_f:\Gal(\Qbar/\QQ)\rightarrow\Aut(M_{f,\lambda})$ be the
$2$-dimensional $\lambda$-adic Galois representation attached to
$f$. Let $\rhobar_f$ be its reduction $\pmod{\lambda}$, which is
unambiguously defined if it is irreducible. Likewise for $\rho_g$
and $\rhobar_g$.
\begin{lem}\label{cp}
\begin{enumerate}
\item Suppose that $\rhobar_f$ and $\rhobar_g$ are irreducible, that $\ell>k'$ and $\ell\nmid N$.
Suppose (for some $p\mid\mid N$) that there is no normalised newform
$h$ of level dividing $N/p$ and trivial character, of weight $k'$
with $a_q(h)\equiv a_q(f)\pmod{\lambda}$ for all primes $q\nmid \ell
N$, or of weight $k$ with $a_q(h)\equiv a_q(g)\pmod{\lambda}$ for
all primes $q\nmid \ell N$. Then the $\lambda$ part of $c_p(t)$ is
trivial (for any $t$).
\item If $\lambda\mid \ell$ with $\ell\nmid N$ and $\ell >k'+k-1$ then the $\lambda$ part of
$c_{\ell}(t)$ is trivial (for any $t$).
\end{enumerate}
\end{lem}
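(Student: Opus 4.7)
For part (1), the strategy is to combine Ribet's level-lowering theorem with a short lattice computation on the tensor product. Under the stated hypotheses ($\rhobar_f$ and $\rhobar_g$ irreducible, $\ell>k'\geq k$, $\ell\nmid N$), Ribet's theorem (in its higher-weight form due to Diamond), applied separately to $f$ and to $g$, says that the non-existence of a weight-$k'$ (resp.\ weight-$k$) newform of level dividing $N/p$ congruent mod $\lambda$ to $f$ (resp.\ $g$) forces $\rhobar_f$ (resp.\ $\rhobar_g$) to be ramified at $p$. Since each $\rho_\bullet\big|_{D_p}$ is of Steinberg type (up to an unramified twist) at $p\mid\mid N$ with trivial character, this ramification of the reduction is equivalent to the integral statement that $\MMM_{f,\lambda}^{I_p}$ and $\MMM_{g,\lambda}^{I_p}$ are direct $O_\lambda$-summands of the respective lattices, i.e.\ the quotients are torsion-free.

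Next I choose $O_\lambda$-bases $\{v_1,v_2\}$ of $\MMM_{f,\lambda}$ and $\{w_1,w_2\}$ of $\MMM_{g,\lambda}$ with $v_1,w_1$ spanning the respective $I_p$-invariants, so that inertia acts unipotently by $\sigma\mapsto\begin{pmatrix}1&t_\ell(\sigma)\\0&1\end{pmatrix}$ (up to unramified twists), with nilpotent monodromy operators $N_f,N_g$. A direct calculation shows that $(\MMM_{f\otimes g,\lambda})^{I_p}$ is the simultaneous kernel of $N=N_f\otimes I+I\otimes N_g$ and $N_f\otimes N_g$, and equals $O_\lambda(v_1\otimes w_1)\oplus O_\lambda(v_1\otimes w_2-v_2\otimes w_1)$. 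Together with $v_1\otimes w_2$ and $v_2\otimes w_2$, these vectors form a full $O_\lambda$-basis of $\MMM_{f\otimes g,\lambda}$, so $(\MMM_{f\otimes g,\lambda})^{I_p}$ is a direct summand. Hence $A_\lambda^{I_p}$ is divisible and the $\lambda$-part of $c_p(t)$ vanishes for every $t$.

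For part (2), $\ell\nmid N$ makes $\rho_f\big|_{D_\ell}$ and $\rho_g\big|_{D_\ell}$ crystalline, so every Tate twist $M_{f\otimes g,\lambda}(t)$ is crystalline at $\ell$. Its Hodge-Tate weights $\{-t,\,k-1-t,\,k'-1-t,\,k+k'-2-t\}$ span an interval of length $k+k'-2\leq\ell-2$, placing $\MMM_{f\otimes g,\lambda}(t)$ in the Fontaine-Laffaille range. By Bloch-Kato's Theorem 4.1(iii) of \cite{BK} together with the Fontaine-Laffaille description of $H^1_f(\QQ_\ell,-)$, the Tamagawa factor $c_\ell(t)$ is trivial at $\lambda$.

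The main obstacle is the lattice bookkeeping in part (1): the translation from ``$\rhobar_\bullet$ ramified at $p$'' to ``$\MMM_{\bullet,\lambda}/\MMM_{\bullet,\lambda}^{I_p}$ torsion-free'' for the individual factors, and the verification that this minimally-ramified property is preserved under tensor product, which is exactly what the explicit kernel computation accomplishes. Part (2) is by comparison routine, being essentially an unwinding of the definition of $H^1_f$ in the Fontaine-Laffaille range.
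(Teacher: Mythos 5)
Your part (2) follows essentially the paper's route (crystallinity at $\ell$, the Fontaine--Laffaille bound $\ell-1>k'+k-2$ on the span of the Hodge--Tate weights, and Theorem 4.1(iii) of \cite{BK}), and your use of level-lowering and of the unipotence of the $I_p$-action in part (1) also matches the paper. The problem is the pivotal deduction in part (1). You assert that ramification of $\rhobar_f$ is \emph{equivalent} to $\MMM_{f,\lambda}^{I_p}$ being an $O_{\lambda}$-direct summand, and that the direct-summand property of $(\MMM_{f\otimes g,\lambda})^{I_p}$ implies that $A_{\lambda}^{I_p}$ is divisible. Both implications fail: for \emph{any} $O_{\lambda}$-lattice $\MMM$ in a $\lambda$-adic representation, $\MMM^{I_p}$ is automatically saturated (if $\lambda x$ is $I_p$-fixed then so is $x$, since $\MMM$ is torsion-free), hence always a direct summand; so this property carries no information about the residual representation and cannot detect divisibility of $A_{\lambda}^{I_p}$. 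Concretely, take $\MMM=O_{\lambda}^2$ with $\sigma\in I_p$ acting by $\bigl(\begin{smallmatrix}1&\lambda t_{\ell}(\sigma)\\0&1\end{smallmatrix}\bigr)$: then $\MMM^{I_p}=O_{\lambda}e_1$ is a direct summand, yet $A^{I_p}\cong K_{\lambda}/O_{\lambda}\oplus O_{\lambda}/\lambda$ is not divisible.

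What is actually needed --- and what the paper proves --- is that the $I_p$-fixed subspace of the residual representation $\rhobar_f\otimes\rhobar_g$ has the \emph{same} dimension (namely $2$) as the $I_p$-fixed subspace of $\rho_f\otimes\rho_g$ over $K_{\lambda}$; this equality of dimensions is what forces $A_{\lambda}^{I_p}$ to be divisible. Your explicit basis does contain the required information: level-lowering guarantees that $\rhobar_f$ and $\rhobar_g$ are ramified, so the monodromy operators $N_f,N_g$ are nonzero modulo $\lambda$, and redoing your kernel computation modulo $\lambda$ (rather than over $O_{\lambda}$) gives $\dim_{\FF_{\lambda}}(\rhobar_f\otimes\rhobar_g)^{I_p}=2$. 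So the argument is repairable, but the step ``is a direct summand, hence $A_{\lambda}^{I_p}$ is divisible'' must be replaced by this mod-$\lambda$ dimension count.
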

\begin{proof}
\begin{enumerate}
\item Applying a level-lowering theorem (Theorem 1.1 of \cite{Di}, see also \cite{R2,R3}), $\rhobar_f$ and $\rhobar_g$
are both ramified at $p$. However, since $p\mid\mid N$, the action
of $I_p$ on each of $M_{f,\lambda}$ and $M_{g,\lambda}$ is
unipotent, by Theorem 7.5 of \cite{L}, as recalled in Theorem
4.2.7(3)(b) of \cite{Hi2}, for a convenient reference. It follows
that both $\rhobar_f\otimes \rhobar_g$ and $\rho_f\otimes\rho_g$
have $I_p$-fixed subspace of dimension precisely $2$, hence that
$A_{\lambda}^{I_p}$ is divisible. As noted above, this implies that
the $\lambda$-part of $c_p(t)$ is trivial.
\item It follows from Lemma~5.7 of \cite{DFG} (whose proof relies on an
application, at the end of Section~2.2, of the results of \cite{Fa})
that $\MMM_{f\otimes g,\lambda}$ is the
$O_{\lambda}[\Gal(\Qbar_{\ell}/\QQ_{\ell})]$-module associated to
the filtered $\phi$-module $\MMM_{f\otimes g,\dR}\otimes
O_{\lambda}$ (identified with the crystalline realisation) by the
functor they call $\mathbb{V}$. (This property is part of the
definition of an $S$-integral premotivic structure given in
Section~1.2 of \cite{DFG}.) Given this, the lemma follows from
Theorem 4.1(iii) of \cite{BK}. (That $\mathbb{V}$ is the same as the
functor used in Theorem~4.1 of \cite{BK} follows from the first
paragraph of 2(h) of \cite{Fa}.)
\end{enumerate}
\end{proof}
\begin{cor}\label{pred} Suppose that $N$ is square-free.
Assume the conditions of Lemma \ref{cp}(1), for all primes $p\mid
N$, and of Lemma \ref{cp}(2), and also that (for some $k\leq t\leq
k'-1$)
$$\ord_{\lambda}\left(\frac{L_{f\otimes g}(t)}{\pi^{2t-(k-1)}\langle f,f\rangle}\right)>0.$$
 Then the Bloch-Kato conjecture predicts that
$\ord_{\lambda}(\#\Sha(t))>0$, so predicts that the Selmer group
$H^1_f(\QQ,A_{\lambda}(t))$ is non-trivial.
\end{cor}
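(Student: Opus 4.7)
The plan is to take $\ord_\lambda$ of both sides of the conjectural identity (\ref{blka2}) and show that, on the right-hand side, every factor apart from $\#\Sha(t)$ contributes non-negatively; the strict positivity of the $\lambda$-adic valuation on the left then forces $\ord_\lambda(\#\Sha(t)) > 0$.

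First I would verify that every Tamagawa factor $c_p(t)$ has trivial $\lambda$-part. For primes $p \mid N$ this is exactly Lemma \ref{cp}(1), whose hypotheses we have assumed for each such $p$ (using that $N$ square-free gives $p \mid\mid N$). For $p = \ell$ this is Lemma \ref{cp}(2), which applies because $\ell > k' + k - 1$. For primes $p \nmid N\ell$ the representation $A_\lambda$ is unramified at $p$, so $A_\lambda^{I_p} = A_\lambda$ is divisible and the remark preceding Conjecture \ref{bk} gives trivial $\lambda$-part. At $p = \infty$ the factor is irrelevant since $\ell > k'+k-1 \geq 3$ forces $\ell \neq 2$.

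Next I would observe that $\#H^0(\QQ, A(t))$ and $\#H^0(\QQ, \check{A}(1-t))$ are cardinalities of finite torsion modules, and $c(f)$ is an integral ideal; each accordingly contributes $\ord_\lambda \geq 0$ to the denominator. Taking $\ord_\lambda$ of (\ref{blka2}) thus gives
\[
0 < \ord_\lambda\!\left(\frac{L_{f\otimes g}(t)}{\pi^{2t-(k-1)}\langle f,f\rangle}\right) \leq \ord_\lambda(\#\Sha(t)),
\]
so $\ord_\lambda(\#\Sha(t)) > 0$. Since $\Sha(t)$ is defined as the direct sum, over primes $\lambda$ of $O_K$, of the quotient of $H^1_f(\QQ, A_\lambda(t))$ by the image of $H^1_f(\QQ, M_{f\otimes g,\lambda}(t))$, non-triviality of its $\lambda$-part implies that $H^1_f(\QQ, A_\lambda(t))$ itself contains an element of order $\lambda$, giving the asserted non-triviality of the Selmer group.

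The argument is essentially bookkeeping once Lemma \ref{cp} is in hand, so there is no significant obstacle in this corollary; the genuine work lies upstream, in establishing triviality of the bad-prime Tamagawa factors, which has already been carried out.
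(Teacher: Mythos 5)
Your argument is correct and is exactly the (omitted) proof the paper intends: the corollary follows by taking $\ord_\lambda$ of (\ref{blka2}), using Lemma \ref{cp} to kill the Tamagawa factors and the integrality of $\#H^0(\QQ,A(t))$, $\#H^0(\QQ,\check{A}(1-t))$ and $c(f)$ to conclude $\ord_\lambda(\#\Sha(t))\geq\ord_\lambda(L_{f\otimes g}(t)/\pi^{2t-(k-1)}\langle f,f\rangle)>0$. The final step, that non-triviality of the $\lambda$-part of $\Sha(t)$ forces $H^1_f(\QQ,A_\lambda(t))\neq 0$ since the former is a quotient of the latter, is also as intended.
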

The goal of this paper is to construct (under further hypotheses) a
non-zero element of $H^1_f(\QQ,A_{\lambda}(t))$, in the case that
$t$ is the near-central point $t=\frac{k'+k-2}{2}$.
\begin{lem}\label{torsion}
If $\ell\nmid N$, $\ell>k'-1$ and $k<t<k'-1$ then the
$\lambda$-parts of $\#H^0(\QQ,A(t))$ and $\#H^0(\QQ,\check{A}(1-t))$
are trivial.
\end{lem}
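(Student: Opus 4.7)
The plan is to reduce in each case to vanishing of Galois invariants of the residual representation, and rule out a trivial subrepresentation using Hodge-Tate weight information via Fontaine-Laffaille theory.

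Since $\ell\nmid N$, both $M_{f,\lambda}$ and $M_{g,\lambda}$ are crystalline at $\ell$ with Hodge-Tate weights $\{0, k'-1\}$ and $\{0, k-1\}$ respectively, so $M_{f\otimes g,\lambda}(t)$ is crystalline with Hodge-Tate weights
\[
\{-t,\; k-1-t,\; k'-1-t,\; k+k'-2-t\}.
\]
For $k+1\leq t\leq k'-2$, none of these four integers is zero: indeed $t\geq k+1\geq 3$, $t\neq k-1$ and $t\neq k'-1$ by the range, and $t\leq k'-2<k+k'-2$. A Galois-invariant line would force $0$ to appear as a Hodge-Tate weight, so $H^0(\QQ, M_{f\otimes g,\lambda}(t))=0$. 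The same check applied to $\check M_{f\otimes g,\lambda}(1-t)$, whose Hodge-Tate weights are $\{t-1,\, t-k,\, t-k',\, t-k-k'+1\}$, shows $H^0(\QQ, \check M_{f\otimes g,\lambda}(1-t))=0$. Consequently both $H^0(\QQ, A_\lambda(t))$ and $H^0(\QQ, \check A_\lambda(1-t))$ are finite $O_\lambda$-modules, hence vanish iff their $\lambda$-torsion subgroups $H^0(\QQ, A[\lambda](t))$ and $H^0(\QQ, \check A[\lambda](1-t))$ do.

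Under the hypothesis $\ell>k'-1$, I would then apply Fontaine-Laffaille theory to $M_{f,\lambda}$ and $M_{g,\lambda}$ individually, and transfer to the tensor product via the compatibility of the Fontaine-Laffaille functor with tensor products. This identifies the jumps of the Fontaine-Laffaille filtration on the residual representation $A[\lambda](t)$ with the Hodge-Tate weights of $M_{f\otimes g,\lambda}(t)$, and similarly for $\check A[\lambda](1-t)$. Since $0$ is not among these jumps, the trivial residual representation $\bar{\FF}_\ell$ (whose Fontaine-Laffaille weight is $0$) cannot occur as a subquotient; in particular it is not a subrepresentation, and the residual $H^0$'s vanish as required.

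The main obstacle will be making the Fontaine-Laffaille transfer watertight under the rather mild hypothesis $\ell>k'-1$: in its standard form the theorem requires the Hodge-Tate range of the representation under consideration to lie in $[0,\ell-2]$, which for the tensor product $M_{f\otimes g,\lambda}$ spans $k+k'-2$ and would force a stronger condition. The factor-by-factor approach just sketched is the natural workaround, but one must invoke the appropriate form of the comparison (e.g.\ via torsion crystals or Dieudonn\'e modules, as in the apparatus of \cite{DFG}) to avoid subtle off-by-one issues when $\ell$ is close to $k'$.
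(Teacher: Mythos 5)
Your reduction to the residual representation is fine (in fact the detour through $H^0(\QQ,M_{f\otimes g,\lambda}(t))$ is unnecessary: any nonzero Galois-invariant submodule of $A_\lambda(t)$ already meets the $\lambda$-torsion, so vanishing of $H^0(\QQ,A[\lambda](t))$ suffices directly), and your Hodge--Tate weight bookkeeping is correct. The problem is that the central step --- transferring Fontaine--Laffaille information to the tensor product --- is exactly where the argument breaks, and the gap you yourself flag is not closable in the form you propose. Fontaine--Laffaille theory controls the residual subquotients by their filtration jumps only when the entire Hodge--Tate range fits in a window of length at most $\ell-2$; for $M_{f\otimes g,\lambda}$ that range has length $k+k'-2$, so your argument needs $\ell>k'+k-1$ (the hypothesis of Lemma \ref{cp}(2)), not the $\ell>k'-1$ of the present lemma. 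There is no compatibility of the torsion Fontaine--Laffaille functor with tensor products outside the admissible window, so ``apply it factor by factor and transfer to the tensor product'' is precisely the assertion requiring proof, and in the generality you need it is not available.

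The paper's proof avoids integral $p$-adic Hodge theory for the tensor product altogether. It invokes the Deligne--Fontaine description (Theorems 2.5 and 2.6 of \cite{Ed}) of $\rhobar_f|_{I_\ell}$ and $\rhobar_g|_{I_\ell}$: their composition factors are explicit powers of the mod $\ell$ cyclotomic character or of the level-two fundamental character $\psi$, valid as soon as the weight is at most $\ell+1$, i.e. under $\ell>k'-1$. The composition factors of $(\rhobar_f\otimes\rhobar_g)|_{I_\ell}$, twisted by $t$, are then simply products of these tame characters, namely powers $\psi^{a+(\ell+1)t}$ with $a$ running over exponents congruent to $0$, $1-k$, $1-k'$ or $2-k-k'$ modulo $\ell$; an elementary congruence check using $k<t<k'-1$ shows none of these is trivial, so $A[\lambda](t)$ and $\check{A}[\lambda](1-t)$ have no trivial composition factor even after restriction to $I_\ell$. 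This is a statement purely about the residual representations of the two factors, assembled multiplicatively on inertia, and it is what lets the lemma hold under the weaker hypothesis. To salvage your route you would have to either strengthen the hypothesis to $\ell>k'+k-1$ or replace the Fontaine--Laffaille step by such an inertial-character argument.
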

\begin{proof} If not, then either $A[\lambda](t)$ or $\check{A}[\lambda](1-t)$
would have a trivial composition factor. The composition factors of
$\rhobar_f|_{I_{\ell}}$ are either $\chi^0,\chi^{1-k}$ (in the
ordinary case, with $\chi$ the cyclotomic character) or
$\psi^{1-k},\psi^{\ell (1-k)}$ (in the non-ordinary case, with
$\psi$ a fundamental character of level $2$). This follows from
theorems of Deligne and Fontaine, which are Theorems 2.5 and 2.6 of
\cite{Ed}. Noting that $\psi$ has order $\ell^2-1$, with
$\psi^{\ell+1}=\chi$, the composition factors of
$(\rhobar_f\otimes\rhobar_g)|_{I_{\ell}}$ are of the form
$\psi^a,\psi^b,\psi^c,\psi^d$, with $1-\ell^2<a,b,c,d\leq 0$ and
each of $a,b,c,d$ congruent to either $0,1-k,1-k'$ or
$2-k-k'\pmod{\ell}$. Twisting by $t$ is the same as multiplying by
$\psi^{(\ell+1)t}$. This exponent is congruent to $t\pmod{\ell}$,
and $k<t<k'-1$. Adding to this the possible values for
$a,b,c,d\pmod{\ell}$ can never produce $0$ or $1$. Hence neither
$A[\lambda](t)$ nor $\hat{A}[\lambda](1-t)$ can have a trivial
composition factor (even when restricted to $I_{\ell}$).
\end{proof}
\section{A $4$-dimensional Galois representation}
Let $f,g$ be as in \S\S 2,3, both of exact level $N>1$. Let
$\lambda\mid\ell$ be a divisor of $\frac{L_{f\otimes
g}(t)}{\pi^{2t-(k-1)}\langle f,f\rangle}$, with $\ell\nmid N(k')!$
and $t=(k'+k-2)/2$. Now suppose that $f$ and $g$ have the same
Atkin-Lehner eigenvalues for each $p\mid N$, and let $F_{f,g}$ be
some genus-$2$ Yoshida lift associated with a factorisation
$N=N_1N_2$, as in \S 8 below. (It is of type
$\Sym^j\otimes\det^{\kappa}$, with $j=k-2, \kappa=2+\frac{k'-k}{2}$.
Note that $j+2\kappa-3=k'-1$.)

Suppose that there is a cusp form $G$ for $\Gamma_0^{(2)}(N)$, an
eigenvector for all the local Hecke algebras at $p\nmid N$, not
itself a Yoshida lift of the same $f$ and $g$, such that there is a
congruence $\pmod{\lambda}$ of all Hecke eigenvalues (for $p\nmid
N$) between $G$ and $F_{f,g}$. In particular, if $\mu_G(p)$ is the
eigenvalue for $T(p)$ on $G$ (defined as in \S 2.1 of \cite{Ar},
replacing $\Sp_4(\ZZ)$ by $\Gamma_0^{(2)}(N)$), then
\begin{equation}\label{cong}
\mu_G(p)\equiv a_p(f)+p^{(k'-k)/2}a_p(g)\pmod{\lambda},\,\,\text{
for all }p\nmid N.
\end{equation}
Under certain additional hypotheses, we prove in \S 9 below, the
existence of such a $G$. (We enlarge $K$ if necessary, to contain
the Hecke eigenvalues of $G$.)

Let $\Pi_G$ be an automorphic representation of $\GSp_4(\AAA)$
associated to $G$ as in 3.2 of \cite{Sc} and 3.5 of \cite{AS}. (This
$\Pi_G$ is not necessarily uniquely determined by $G$, but its local
components at $p\nmid N$ are.) By Theorem I of \cite{We}, there is
an associated continuous, linear representation
$$\rho_G: \Gal(\Qbar/\QQ)\rightarrow\GL_4(\Qbar_{\ell}).$$
By enlarging $K$ if necessary, we may assume that it takes values in
$\GL_4(K_{\lambda})$.
\begin{lem}\label{endocap}
Suppose that there exists a $G$ as above. Suppose also that
$\lambda$ is not a congruence prime for $f$ in $S_{k'}(\Gamma_0(N))$
or $g$ in $S_k(\Gamma_0(N))$, that $\ell>k'$, and that $\rhobar_f$
and $\rhobar_g$ are irreducible representations of
$\Gal(\Qbar/\QQ)$.
\begin{enumerate}
\item $\Pi_G$ is not a weak endoscopic lift.
\item $\Pi_G$ is not CAP.
\end{enumerate}
\end{lem}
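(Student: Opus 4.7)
The plan is to use the congruence (\ref{cong}) together with the Chebotarev density theorem and the Brauer-Nesbitt theorem to pin down the semisimplification of the mod $\lambda$ reduction of $\rho_G$: namely, $\bar\rho_G^{ss} \cong \rhobar_f \oplus \rhobar_g((k-k')/2)$, since the characteristic polynomials of $\Frob_p$ on the two sides agree mod $\lambda$ at all $p\nmid\ell N$. Each of the two assertions then follows by comparing this identification with the reducibility structure forced by $\Pi_G$ being endoscopic or CAP.

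For part (1), if $\Pi_G$ is weak endoscopic, then by definition $\rho_G \cong \rho_\tau \oplus \rho_\sigma$ for two cuspidal automorphic representations $\tau, \sigma$ of $\GL_2(\AAA)$, possibly obtained by Jacquet-Langlands transfer from an inner form of $\GL_2$ ramified at primes dividing $N_1$. Reducing mod $\lambda$ and using irreducibility of $\rhobar_f$ and $\rhobar_g$, the Brauer-Nesbitt theorem gives (up to swap) $\rhobar_\tau \cong \rhobar_f$ and $\rhobar_\sigma \cong \rhobar_g((k-k')/2)$; the Hodge-Tate weights, combined with $k' > k$, pin down which is which. Transferring via Jacquet-Langlands back to elliptic newforms $\tau^{JL}$ and $\sigma^{JL}$ of respective weights $k'$ and $k$ and levels dividing $N$, we obtain a congruence mod $\lambda$ between $\tau^{JL}$ and $f$, and between $\sigma^{JL}$ and $g$. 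The hypothesis that $\lambda$ is not a congruence prime for $f$ in $S_{k'}(\Gamma_0(N))$ or $g$ in $S_k(\Gamma_0(N))$, combined with a level-raising step if the levels are proper divisors of $N$, then forces $\tau^{JL} = f$ and $\sigma^{JL} = g$. But then $\Pi_G$ is itself a Yoshida lift of $(f, g)$, contradicting the hypothesis on $G$.

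For part (2), a CAP representation of $\GSp_4$ is CAP with respect to either the Siegel parabolic (Saito-Kurokawa type) or the Klingen parabolic (Soudry type). In both cases the associated four-dimensional Galois representation is reducible over $\Qbar_\ell$ and splits off at least one one-dimensional summand: an integer power of the cyclotomic character in the Saito-Kurokawa case, or a twist by a finite-order Hecke character in the Soudry case. Passing to the semisimplification mod $\lambda$, this would force $\rhobar_f \oplus \rhobar_g((k-k')/2)$ to contain a one-dimensional summand, contradicting the irreducibility of both $\rhobar_f$ and $\rhobar_g$.

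The main obstacle is the Jacquet-Langlands step in part (1): one needs to ensure that $\tau^{JL}$ and $\sigma^{JL}$ have level exactly $N$, not a proper divisor, so that the non-congruence-prime hypothesis at level $N$ applies directly. This is essentially a level-raising argument in the spirit of Ribet, and also requires that $f$ and $g$ be old at the primes of the relevant quaternion algebra precisely in the expected way. A secondary technical matter is verifying the asserted reducibility of $\rho_G$ in each CAP case, which appeals to Weissauer's construction and the classification of CAP representations of $\GSp_4$ by Piatetski-Shapiro and Soudry.
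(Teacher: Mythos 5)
Your overall strategy for part (1) — Brauer--Nesbitt on the semisimplified mod $\lambda$ reductions, then the non-congruence-prime hypothesis to force $f'=f$ and $h=g$ — matches the paper's up to that point (the paper likewise uses $\ell>k'$ and the inertia-at-$\ell$ characters to rule out the cross-matching $\rhobar_{f'}\simeq\rhobar_g((k-k')/2)$). But your final step, ``then $\Pi_G$ is itself a Yoshida lift of $(f,g)$, contradicting the hypothesis on $G$,'' is a genuine gap. The hypothesis on $G$ is only that the \emph{cusp form} $G$ is not one of the explicitly constructed Yoshida lifts $F'_{f,g}$; knowing that $\Pi_G$ lies in the weak endoscopic packet attached to $(f,g)$ does not by itself identify $G$ with such a lift. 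A priori $G$ could generate a different automorphic member of the packet, or be a different vector of the prescribed weight and level inside the same representation. The paper closes this by invoking (4) and (6) of Weissauer's Hypothesis A (automorphy and multiplicity one for the relevant packet members), identifying the local components $\Pi_p$ for $p\mid N$ as type VIa via Lemmes 1.2.8 and 1.2.10 of Skinner--Urban, and then reading off from Table 3 of Schmidt that the space of $\Gamma_0^{(2)}(\ZZ_p)$-fixed vectors in $\Pi_p$ is $1$-dimensional, so that $G=F'_{f,g}$ up to scaling. Without this local-global input your contradiction is not reached. (Your worry about a Jacquet--Langlands/level-raising step is, by contrast, a non-issue: the congruence-prime hypothesis is formulated for eigenforms in $S_{k'}(\Gamma_0(N))$, which already contains the oldform copies of any newform of level properly dividing $N$.)

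In part (2) there is a secondary problem: you dispose of the Klingen (Soudry) case by asserting that the associated $4$-dimensional Galois representation splits off a one-dimensional twist by a finite-order character, but that is not the correct shape in general (in the Soudry and Borel cases the constituents can include irreducible $2$-dimensional induced pieces, which would not contradict irreducibility of $\rhobar_f$ and $\rhobar_g$), and in any case Weissauer's Theorem I is not what attaches Galois representations to CAP forms. The paper instead first applies Corollary 4.5 of Pitale--Schmidt, which for square-free level eliminates all but the Siegel-parabolic case, and then uses the explicit Saito--Kurokawa eigenvalue formula $\mu_G(p)=a_p(f')+\chi(p)p^{k'/2}+\chi(p)p^{(k'/2)-1}$ (which also forces $k=2$) to derive the contradiction with the congruence and the irreducibility of $\rhobar_f$ and $\rhobar_g$. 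You need that classification step, or some substitute for it, before your ``one-dimensional constituent'' argument can run.
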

By $\lambda$ not being a congruence prime for $f$ in
$S_{k'}(\Gamma_0(N))$, we mean that there does not exist a different
Hecke eigenform $h\in S_{k'}(\Gamma_0(N))$, and a prime $\lambda'$
dividing $\lambda$ in a sufficiently large extension, such that
$a_p(h)\equiv a_p(f)\pmod{\lambda'}$ for all primes $p\nmid \ell N$.
\begin{proof}
\begin{enumerate}
\item If $\Pi_G$ were a weak endoscopic lift then there would have
to exist newforms $f'\in S_{k'}(\Gamma_0(N)), h\in
S_{k}(\Gamma_0(N))$ such that $\mu_G(p)=a_p(f')+p^{(k'-k)/2}a_p(h)$
for almost all primes $p$. (See the introduction of \cite{We} for a
precise definition of weak endoscopic lift, and (3) of Hypothesis A
of \cite{We} for this consequence.) We have then
$$a_p(f')+p^{(k'-k)/2}a_p(h)\equiv
a_p(f)+p^{(k'-k)/2}a_p(g)\pmod{\lambda},$$ for almost all primes
$p$. Consequently, using $\ell>4$ and the Brauer-Nesbitt theorem,
$$\rhobar_f\oplus\rhobar_g\left((k-k')/2\right)\simeq
\rhobar_{f'}\oplus\rhobar_h((k-k')/2).$$ Now $\rhobar_{f'}$ could
not be isomorphic to $\rhobar_g\left((k-k')/2\right)$, since the
restrictions to $I_{\ell}$ give different characters (using
$\ell>k'$). The only way to reconcile the two sides of the above
isomorphism is for $\rhobar_f\simeq \rhobar_{f'}$. Given that
$\lambda$ is not a congruence prime for $f$ in
$S_{k'}(\Gamma_0(N))$, we must have $f'=f$, and similarly $h=g$. It
follows from (4) and (6) of Hypothesis A of \cite{We} that $\Pi_G$
must be associated to some Yoshida lift $F'_{f,g}$ of $f$ and $g$.
(Those $p\mid N$ for which the local component is $\Pi_v^+$ rather
than $\Pi_v^-$ are the divisors of $N_1$.) By (6) of Hypothesis A of
\cite{We}, the multiplicity of $\Pi_G$ in the discrete spectrum is
one. By Lemmes 1.2.8 and 1.2.10 of \cite{SU}, the local
representation $\Pi_p$ of $\GSp(4,\QQ_p)$, for $p\mid N$, is that
labelled VIa in \cite{Sc}. By Table 3 of \cite{Sc}, the spaces of
$\Gamma_0^{(2)}(\ZZ_p)$-fixed vectors in $\Pi_p$ are
$1$-dimensional. It follows that (up to scaling), $G=F'_{f,g}$,
contrary to hypothesis.
\item By Corollary 4.5 of \cite{PS}, $\Pi_G$ could only be CAP for
a Siegel parabolic subgroup, but then, as on p.74 of \cite{We}, we
would have $k=2$ and
$$\mu_G(p)=a_p(f')+\chi(p)p^{k'/2}+\chi(p)p^{(k'/2)-1},$$
for some newform $f'\in S_{k'}(\Gamma_0(N))$ and $\chi$ a quadratic
or trivial character. This is incompatible with $\mu_G(p)\equiv
a_p(f)+p^{(k'-k)/2}a_p(g)\pmod{\lambda}$ and the irreducibility of
$\rhobar_f$ and $\rhobar_g$.
\end{enumerate}
\end{proof}
Note that the proof of Hypothesis A (on which Theorem I also
depends) is not in \cite{We}, but has now appeared in \cite{We2}.
\begin{lem}\label{irred}
Let $G$ be as in Lemma \ref{endocap}. Then the representation
$\rho_G$ is irreducible.
\end{lem}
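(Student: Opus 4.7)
The plan is to argue by contradiction: if $\rho_G$ were reducible (say, over $\overline{K_\lambda}$), I will show that $\Pi_G$ must be a weak endoscopic lift, contradicting Lemma \ref{endocap}(1).

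First I would observe that since $\Pi_G$ is not CAP by Lemma \ref{endocap}(2), it is tempered (by Arthur's classification for $\GSp_4$), so $\rho_G$ is pure of weight $k'-1$: all Frobenius eigenvalues at primes $p \nmid N\ell$ have archimedean absolute value $p^{(k'-1)/2}$. The Hodge-Tate weights of $\rho_G$ are the four distinct integers $\{0,(k'-k)/2,(k+k'-2)/2,k'-1\}$, as determined by the weight $\Sym^j \otimes \det^{\kappa}$ with $j=k-2$ and $\kappa=2+(k'-k)/2$. Since $k'$ is even, $(k'-1)/2$ is not an integer, so no one-dimensional character of $\Gal(\Qbar/\QQ)$ of integral Hodge-Tate weight can be pure of weight $k'-1$; this rules out any one-dimensional subrepresentation or quotient of $\rho_G$.

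Hence a reducible $\rho_G$ must split as $\sigma_1 \oplus \sigma_2$ with both $\sigma_i$ irreducible of dimension two. By purity applied to each $\sigma_i$, their Hodge-Tate weights must each sum to $k'-1$, so the only possible partition of the HT weight set is $\{0,k'-1\}$ for $\sigma_1$ and $\{(k'-k)/2,(k+k'-2)/2\}$ for $\sigma_2$. Each $\sigma_i$ is then a two-dimensional, irreducible, odd, continuous $\ell$-adic representation of $\Gal(\Qbar/\QQ)$, unramified outside $N\ell$, of the correct Hodge-Tate type and cyclotomic determinant. By Serre's conjecture (Khare-Wintenberger), $\sigma_1 \simeq \rho_{f'}$ for a newform $f' \in S_{k'}(\Gamma_0(N'))$ with $N' \mid N$, and $\sigma_2 \simeq \rho_h((k-k')/2)$ for a newform $h \in S_k(\Gamma_0(N''))$ with $N'' \mid N$. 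Taking traces of Frobenius yields $\mu_G(p) = a_p(f') + p^{(k'-k)/2} a_p(h)$ for all $p \nmid N\ell$, exhibiting $\Pi_G$ as a weak endoscopic lift and contradicting Lemma \ref{endocap}(1).

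The main obstacle is ensuring that the Serre conductor of each $\sigma_i$ divides $N$, which should follow from a careful analysis of the ramification of $\rho_G$ at primes $p \mid N$ via the local Langlands correspondence for $\GSp(4,\QQ_p)$ and the known shape of $\Pi_{G,p}$ at such primes. Alternatively, and more cleanly, the entire lemma can be deduced from the theorem of Weissauer (\cite{We2}, \cite{We4}) asserting that $\rho_G$ is irreducible whenever $\Pi_G$ is cuspidal of general type (neither CAP nor weak endoscopic), a hypothesis satisfied here by Lemma \ref{endocap}.
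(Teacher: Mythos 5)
Your overall strategy---derive a contradiction with Lemma \ref{endocap} from any putative decomposition of $\rho_G$---matches the paper's, but the paper executes it far more economically and your execution has genuine gaps. The paper rules out $1$-dimensional constituents by reducing mod $\lambda$: the semisimplification of $\rhobar_G$ is $\rhobar_f\oplus\rhobar_g((k-k')/2)$, both factors $2$-dimensional and irreducible by the hypotheses of Lemma \ref{endocap}, so $\rho_G$ cannot have a $1$-dimensional composition factor. For the remaining $2+2$ case it simply invokes the classification, in 3.2.6 of \cite{SU}, of the possible constituents of $\rho_G$ (one lands in Cas B, (iv) or (v)), each of which forces $\Pi_G$ to be CAP or a weak endoscopic lift, contradicting Lemma \ref{endocap}. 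No purity statement and no modularity of $2$-dimensional Galois representations are needed.

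The concrete gaps in your route: (i) purity of $\rho_G$ does not follow from ``Arthur's classification'' (not available for $\GSp_4$ at the time, and in any case ``non-CAP'' does not formally yield temperedness); the correct input is Weissauer's Ramanujan-type theorem for non-CAP holomorphic Siegel forms, which you would have to cite explicitly. (ii) Oddness of each $\sigma_i$ is asserted, not proved: complex conjugation acts on $\rho_G$ with eigenvalues $+1,+1,-1,-1$, so a priori a $2+2$ splitting could make both pieces even; you need the symplectic structure of $\rho_G$, or a determinant computation $\det\sigma_i=\chi^{1-k'}\times(\text{finite order})$ with control of the finite-order part, to exclude this. (iii) ``Serre's conjecture'' applies to mod $\ell$ representations; to conclude modularity of the $\ell$-adic $\sigma_i$ you are really invoking the Fontaine--Mazur conjecture in dimension $2$ (Kisin, Emerton), which carries hypotheses (on $\ell$ and on the residual image) that you have neither stated nor verified and that are not among the hypotheses of the lemma. (iv) As you concede, the conductor bookkeeping at $p\mid N$ is left open. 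Your closing alternative---quoting an irreducibility theorem for $\Pi_G$ of general type---is closest in spirit to what the paper actually does, except that the paper cites the Skinner--Urban case analysis rather than such a black box.
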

\begin{proof} Suppose that $\rho_G$ is reducible. It cannot have any
$1$-dimensional composition factor, since $\rhobar_G$ has
$2$-dimensional irreducible composition factors $\rhobar_f$ and
$\rhobar_g((k-k')/2)$. (The factors are well-defined, even though
$\rhobar_G$ isn't.) Looking at the list, in 3.2.6 of \cite{SU}, of
possibilities for the composition factors of $\rho_G$, we must be in
Cas B, (iv) or (v). But as in 3.2.6 of \cite{SU}, $\Pi_G$ would be
CAP in one case, a weak endoscopic lift in the other, and both of
these are ruled out by Lemma \ref{endocap}.
\end{proof}
Let $V$, a $4$-dimensional vector space over $K_{\lambda}$, be the
space of the representation $\rho_G$. Choose a
$\Gal(\Qbar/\QQ)$-invariant $O_{\lambda}$-lattice $T$ in $V$, and
let $W:=V/T$. Let $\rhobar_G$ be the representation of
$\Gal(\Qbar/\QQ)$ on $W[\lambda]\simeq T/\lambda T$. This depends on
the choice of $T$, but we may choose $T$ in such a way that
$\rhobar_G$ has $\rhobar_g((k-k')/2)$ as a submodule and $\rhobar_f$
as a quotient. Assume that this has been done.
\begin{lem}\label{notsum}
$T$ may be chosen in such a way that furthermore $\rhobar_f$ is not
a submodule of $\rhobar_G$, i.e. so that the extension of
$\rhobar_f$ by $\rhobar_g((k-k')/2)$ is not split.
\end{lem}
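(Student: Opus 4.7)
The plan is to prove Lemma~\ref{notsum} by a standard lattice-theoretic argument of Ribet type. By Lemma~\ref{irred}, $\rho_G$ is irreducible. The two composition factors $\rhobar_f$ and $\rhobar_g((k-k')/2)$ of the semisimplification of $\rhobar_G$ are non-isomorphic, because their restrictions to $I_\ell$ involve different fundamental characters (as in the proof of Lemma~\ref{torsion}, using $\ell>k'$); in particular $\Hom_{\Gal(\Qbar/\QQ)}(\rhobar_f,\rhobar_g((k-k')/2))=0$.

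I would then argue by contradiction: suppose every $\Gal(\Qbar/\QQ)$-stable $O_\lambda$-lattice $T\subset V$ whose reduction $\bar T$ has $\rhobar_g((k-k')/2)$ as a submodule and $\rhobar_f$ as the quotient is split. Starting from the given $T_0$, define $T_{n+1}\subset T_n$ inductively as the preimage under $T_n\to\bar T_n$ of the $\rhobar_f$-summand in the (split) $\bar T_n$. A direct computation with the filtration $\lambda T_n\subset T_{n+1}\subset T_n$ shows that $\bar T_{n+1}$ fits in $0\to \rhobar_g((k-k')/2)\to\bar T_{n+1}\to\rhobar_f\to 0$, so $T_{n+1}$ is again admissible and, by hypothesis, again has split reduction.

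Next I would translate this into a convergent basis change on $T_0$. In a basis adapted to the split $\bar T_0$, write $M(g)=\begin{pmatrix}A(g)&B(g)\\C(g)&D(g)\end{pmatrix}$, so $B,C\in\lambda M_2(O_\lambda)$. The splitness of $\bar T_1$ translates into $\overline{\lambda^{-1}B}=\bar A\bar X-\bar X\bar D$ for some $\bar X$; conjugating $M$ by $\begin{pmatrix}I&-\lambda X\\0&I\end{pmatrix}\in 1+\lambda M_4(O_\lambda)$ moves $B$ into $\lambda^2 M_2(O_\lambda)$, leaving $C$ fixed. Iterating, splitness of $\bar T_n$ supplies a further conjugation in $1+\lambda^n M_4(O_\lambda)$ that pushes $B$ into $\lambda^{n+1}M_2(O_\lambda)$; the infinite product of these conjugations converges $\lambda$-adically in $\GL_4(O_\lambda)$, so in the limit basis $B=0$ identically. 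The lower block then spans a proper $\Gal(\Qbar/\QQ)$-stable $K_\lambda$-subspace of $V$, contradicting Lemma~\ref{irred}. Hence some admissible $T$ has non-split reduction, which is the required conclusion.

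The main obstacle is the $\lambda$-adic bookkeeping across this iteration, specifically verifying that each conjugation genuinely lies in $1+\lambda^n M_4(O_\lambda)$ so that the infinite product converges. This relies crucially on the vanishing of $\Hom_{\Gal(\Qbar/\QQ)}(\rhobar_f,\rhobar_g((k-k')/2))$, which forces the successive corrections $X_n$ to be uniquely determined mod $\lambda$ and hence to sit in deeper and deeper powers of $\lambda$ at each stage.
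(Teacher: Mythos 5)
Your argument is correct and is essentially the paper's own proof (modelled on Ribet's Proposition 2.1): assuming every admissible lattice has split reduction, one iterates unipotent upper-triangular conjugations lying in successively deeper congruence subgroups, and the $\lambda$-adic limit of the product yields a block-triangular form for $\rho_G$, contradicting the irreducibility supplied by Lemma \ref{irred}. One small remark: the convergence does not in fact rely on the vanishing of $\Hom_{\Gal(\Qbar/\QQ)}(\rhobar_f,\rhobar_g((k-k')/2))$ --- the $n$-th correction matrix is $\begin{pmatrix}I_2&-\lambda^nX_n\\0_2&I_2\end{pmatrix}$ with $X_n\in M_2(O_{\lambda})$ by construction, hence automatically lies in $1+\lambda^nM_4(O_{\lambda})$ whatever splitting is chosen.
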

\begin{proof} We argue as in the proof of Proposition 2.1 of
\cite{R}. Choose an $O_{\lambda}$-basis for $T$, so that
$\rho_G(\Gal(\Qbar/\QQ))\subset \GL_4(O_{\lambda})$. Assuming the
lemma is false, we prove by induction that for all $i\geq 1$ there
exists $M_i=\begin{pmatrix} I_2&S_i\\0_2&I_2\end{pmatrix}\in
\GL_4(O_{\lambda})$ such that $M_i\,\rho_G(\Gal(\Qbar/\QQ))M_i^{-1}$
consists of matrices of the form
$\begin{pmatrix}A&\lambda^iB\\\lambda C&D\end{pmatrix}$, with
$A,B,C,D\in M_2(O_{\lambda})$, and with $S_{i}\equiv
S_{i-1}\pmod{\lambda^{i-1}}$. Then letting $S=\lim S_i$ and
$M=\begin{pmatrix} I_2&S\\0_2&I_2\end{pmatrix}$,
$M\,\rho_G(\Gal(\Qbar/\QQ))M^{-1}$ consists of matrices of the form
$\begin{pmatrix}A&0_2\\\lambda C&D\end{pmatrix}$, contradicting the
irreducibility of $\rho_G$.

By assumption, $\rhobar_f$ is a submodule of $\rhobar_G$ (i.e.
$\rhobar_G$ is semi-simple), so we have $M_1$. This is the base
step. Now suppose that we have $M_i$. We must try to produce
$M_{i+1}$. Let $P=\begin{pmatrix}I_2&0_2\\0_2&\lambda
I_2\end{pmatrix}$. Then
$P^iM_i\,\rho_G(\Gal(\Qbar/\QQ))M_i^{-1}P^{-i}$ consists of matrices
of the form $\begin{pmatrix}A&B\\\lambda^{i+1} C&D\end{pmatrix}$.
Now let $U$ be a matrix of the form
$\begin{pmatrix}I_2&B'\\0_2&I_2\end{pmatrix}$ such that
$UP^iM_i\,\rho_G(\Gal(\Qbar/\QQ))M_i^{-1}P^{-i}U^{-1}$ consists of
matrices of the form $\begin{pmatrix}\tilde{A}&\lambda
\tilde{B}\\\lambda^{i+1}\tilde{C}&\tilde{D}\end{pmatrix}$. This
exists because we are assuming that not only $\rhobar_G$, but any
other reduction with submodule $\rhobar_g((k-k')/2)$, is
semi-simple. Now just let $M_{i+1}=P^{-i}UP^iM_i$. Note that since
$P^{-i}UP^i=\begin{pmatrix}I_2&\lambda^iB'\\0_2&I_2\end{pmatrix}$,
it is clear that $M_{i+1}$ is of the form $\begin{pmatrix}
I_2&S_{i+1}\\0_2&I_2\end{pmatrix}$, with $S_{i+1}\equiv
S_i\pmod{\lambda^i}$.
\end{proof}
We remark that, though the first $T$ chosen may give semi-simple
$\rhobar_G$, the lemma shows that there will be another choice that
gives a non-trivial extension. Compare with the situation for
$5$-torsion on elliptic curves in the isogeny class of conductor
$11$.
\section{A non-zero element in a Bloch-Kato Selmer group}
Let $G$ be as in the previous section. Then by Lemma \ref{notsum},
$\rhobar_G$ is a non-trivial extension of $\rhobar_f$ by
$\rhobar_g((k-k')/2)$:
$$\begin{CD} 0@>>>\rhobar_g((k-k')/2)@>>>\rhobar_G
@>>>\rhobar_f@>>>0.\end{CD}$$ Applying
$\Hom_{\FF_{\lambda}}(\rhobar_f,\,\_\_)$ to the exact sequence, and
pulling back the inclusion of the trivial module in
$\Hom_{\FF_{\lambda}}(\rhobar_f,\rhobar_f)$, we get a non-trivial
extension of the trivial module by
$\Hom(\rhobar_f,\rhobar_g((k-k')/2)$. Thus we get a non-zero class
in $H^1(\QQ,\Hom_{\FF_{\lambda}}(\rhobar_f,\rhobar_g((k-k')/2)))$,
in the standard way. (Lifting the identity to a section $s\in
\Hom_{\FF_{\lambda}}(\rhobar_f,\rhobar_G)$, a representing cocycle
is $g\mapsto g.s-s$, where $(g.s)(x)=g(s(g^{-1}(x)))$.)

Now the dual of $\rhobar_f$ is $\rhobar_f(k'-1)$, so
$$\Hom_{\FF_{\lambda}}(\rhobar_f,\rhobar_g((k-k')/2)))\simeq
\rhobar_f(k'-1)\otimes\rhobar_g((k-k')/2)\simeq
\rhobar_f\otimes\rhobar_g((k'+k-2)/2).$$ In the notation of \S 3,
this is $ A[\lambda]((k'+k-2)/2)$. So we have a non-zero class $c\in
H^1(\QQ,A[\lambda]((k'+k-2)/2))$. By Lemma \ref{torsion},
$H^0(\QQ,A_{\lambda}((k'+k-2)/2))$ is trivial, so we get a non-zero
class $d\in H^1(\QQ,A_{\lambda}((k'+k-2)/2))$, the image of $c$
under the map induced by inclusion.

\begin{prop}\label{selmer}
Let $f\in S_{k'}(\Gamma_0(N)), g\in S_k(\Gamma_0(N))$ be normalised
newforms of square-free level $N>1$, with $k'>k\geq 2$. Suppose that
at each prime $p\mid N$, $f$ and $g$ share the eigenvalue of the
Atkin-Lehner involution. Let $\lambda\mid\ell$ be a divisor of
$\frac{L_{f\otimes g}((k'+k-2)/2)}{\pi^{k'-1}\langle f,f\rangle}$,
with $\ell\nmid N$ and $\ell>\frac{3k'+k-2}{2}$. Suppose also that
$\lambda$ is not a congruence prime for $f$ in $S_{k'}(\Gamma_0(N))$
or $g$ in $S_k(\Gamma_0(N))$, and that $\rhobar_f$ and $\rhobar_g$
are irreducible representations of $\Gal(\Qbar/\QQ)$. Assume, for
each $p\mid N$, the conditions of Lemma \ref{cp}(1). Finally,
suppose that there exists $G\in S_{\rho}(\Gamma_0^{(2)}(N))$ as in
the second paragraph of \S 4. Then the Bloch-Kato Selmer group
$H^1_f(\QQ,A_{\lambda}((k'+k-2)/2))$ is non-zero.
\end{prop}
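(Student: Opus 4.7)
The plan is to show that the non-zero class $d \in H^1(\QQ, A_\lambda(t))$ (with $t = (k'+k-2)/2$) already produced in the paragraphs preceding the statement---coming from the non-trivial extension of $\rhobar_f$ by $\rhobar_g((k-k')/2)$ furnished by Lemma \ref{notsum}, pushed forward along $A[\lambda] \hookrightarrow A_\lambda$, and rendered non-zero by Lemma \ref{torsion}---actually satisfies $d \in H^1_f(\QQ, A_\lambda(t))$. Thus the proof reduces to a local-at-each-$p$ verification that $\res_p(d) \in H^1_f(\QQ_p, A_\lambda(t))$.

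First I would dispose of the unramified primes $p \nmid \ell N$: since $\rho_G$ is unramified there, the extension defining $d$ is unramified at $p$, and combined with the divisibility of $A_\lambda^{I_p}$ this places $\res_p(d)$ in the finite part of local cohomology (which for such $p$ coincides with the unramified classes). Next, at each prime $p \mid N$, the hypotheses (via Lemma \ref{cp}(1)) already supply the divisibility of $A_\lambda^{I_p}$. The remaining task is to pin down the local type of $\rho_G|_{D_p}$: since $\Pi_G$ is neither CAP nor weak endoscopic (Lemma \ref{endocap}) but its Hecke eigenvalues away from $N$ are congruent to those of $F_{f,g}$, one expects $\Pi_p$ to be of type VIa (as already exploited in the proof of Lemma \ref{endocap}), so that $\rho_G|_{I_p}$ is unipotent and the extension class $d$ restricts into the finite part $H^1_f(\QQ_p, A_\lambda(t))$. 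The archimedean place is automatic since $\ell$ is odd.

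The main obstacle is the crystalline condition at $p = \ell$. The plan is to lift the extension furnishing $d$ from the mod-$\lambda$ level to the integral $\lambda$-adic level, using the $O_\lambda$-lattice $T \subset V$ from \S 4: one writes $T$ as an extension of lattices in $\lambda$-adic subrepresentations whose reductions are $\rhobar_f$ and $\rhobar_g((k-k')/2)$. The hypothesis $\ell > (3k'+k-2)/2$ (in particular $\ell > k'+k-2$) keeps us in the Fontaine-Laffaille range, so by the argument already used in Lemma \ref{cp}(2) the representation $\rho_G|_{D_\ell}$ is crystalline, and the lattice extension is an extension of crystalline representations. Pushing forward to $A_\lambda(t)$ and appealing to (a twisted form of) Theorem 4.1(iii) of \cite{BK}, the image lies in $H^1_f(\QQ_\ell, A_\lambda(t))$.

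Assembling the four local verifications yields $d \in H^1_f(\QQ, A_\lambda(t))$, and since $d$ is non-zero by construction the Selmer group is non-trivial, as claimed. The delicate point throughout is the interplay between the integral structure of $\rho_G$ (which carries the local information) and the fact that $d$ factors through the mod-$\lambda$ reduction; the cleanest way to handle the crystalline step is probably to work at the lattice level and only reduce at the end.
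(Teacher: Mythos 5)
Your overall strategy (verify the local condition for the already-constructed class $d$ at $p\nmid\ell N$, at $p\mid N$, and at $p=\ell$) is the paper's, and your treatments of the unramified primes and of $p=\ell$ match the paper's proof (which at $\ell$ likewise invokes crystallinity of $\rho_G|_{D_\ell}$ via Urban/Faltings and the Fontaine--Laffaille bound encoded in $\ell>\frac{3k'+k-2}{2}$, deferring the details to Lemma 7.2 of \cite{Du}). But your argument at $p\mid N$ has a genuine gap. First, the ``type VIa'' identification you cite from Lemma \ref{endocap} was deduced there for a \emph{hypothetical endoscopic} $\Pi_G$ en route to a contradiction; it is not available for the actual non-endoscopic $G$. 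More seriously, even granting that $\rho_G|_{I_p}$ is unipotent, that alone does \emph{not} place $\res_p(d)$ in $H^1_f$: the class $c|_{I_p}$ is represented by the cocycle $\sigma\mapsto t_{\ell}(\sigma)B$, where $B$ is the off-diagonal block of the nilpotent monodromy matrix $\tilde{N}=\begin{pmatrix}A&B\\0_2&A\end{pmatrix}$ (with $A$ the monodromy of $\rhobar_f|_{I_p}$ and $\rhobar_g|_{I_p}$, both non-trivial by the level-lowering hypothesis), and whether this cocycle is a coboundary for the $I_p$-action on $\Hom(\rhobar_f,\rhobar_g((k-k')/2))$ depends entirely on the shape of $B$. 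If $\tilde N$ had a Jordan block of size $\geq 3$ the class would be locally non-trivial and the proof would fail.

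The missing ingredient is the paper's appeal to Theorem 2.2.5(1) of \cite{GT}, which gives $\tilde{N}^2=0$; this forces $B=\begin{pmatrix}0&b\\0&0\end{pmatrix}$, and then a short computation exhibits $\sigma\mapsto t_{\ell}(\sigma)B$ as the coboundary of $\begin{pmatrix}0&0\\0&b\end{pmatrix}$, whence $c|_{I_p}=0$ and (using divisibility of $A_{\lambda}^{I_p}$ from Lemma \ref{cp}(1)) $\res_p(d)\in H^1_f(\QQ_p,A_{\lambda}(t))$. Invoking Genestier--Tilouine requires verifying its hypotheses: irreducibility of $\rho_G$ (Lemma \ref{irred}), symplecticity (Theorem 2 of \cite{We4}), and the fact that the subspace of $\Pi_p$ fixed by a Siegel or Klingen parahoric is $1$-dimensional, which is read off from Table 3 of \cite{Sc} using that $\Pi_p$ has a $\Gamma_0^{(2)}(\ZZ_p)$-fixed vector but no $\GSp_4(\ZZ_p)$-fixed vector (the latter because $\rhobar_G$ is ramified at $p$). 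None of this is present in your sketch, and without it the $p\mid N$ case is not established.
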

\begin{remar} Note that Corollary \ref{Nprime} gives sufficient
conditions for the existence of $G$.
\end{remar}
\begin{proof} We will show that the non-zero element $d\in H^1(\QQ,A_{\lambda}((k'+k-2)/2))$
satisfies $\res_p(d)\in H^1_f(\QQ_p, A_{\lambda}((k'+k-2)/2))$ for
each prime $p$.
\begin{enumerate}
\item If $p\nmid \ell N$ then $\rho_G|_{I_p}$ is trivial, so
certainly $$\begin{CD}
0@>>>\rhobar_g((k-k')/2)|_{I_p}@>>>\rhobar_G|_{I_p}
@>>>\rhobar_f|_{I_p}@>>>0\end{CD}$$ splits, showing that
$\res_p(c)\in \ker(H^1(\QQ_p,A[\lambda]((k'+k-2)/2))\rightarrow
H^1(I_p,A[\lambda]((k'+k-2)/2)))$, hence that $\res_p(d)\in
\ker(H^1(\QQ_p,A_{\lambda}((k'+k-2)/2))\rightarrow
H^1(I_p,A_{\lambda}((k'+k-2)/2)))$. Since $A_{\lambda}^{I_p}$ is
divisible (in this case the whole of $A_{\lambda}$), this shows that
$\res_p(d)\in H^1_f(\QQ_p,A_{\lambda}((k'+k-2)/2))$, as in Lemma 7.4
of \cite{Br}.
\item If $p=\ell$ then we may prove $\res_p(d)\in H^1_f(\QQ_p, A_{\lambda}((k'+k-2)/2))$
just as in Lemma 7.2 of \cite{Du}. Since $\ell\nmid N$,
$\rho_G|_{D_{\ell}}$ is crystalline; see Theorem 3.2(ii) of
\cite{U}, which refers to \cite{Fa} and \cite{CF}. It is for this
case that we need the condition $\ell>\frac{3k'+k-2}{2}$. This
$\frac{3k'+k-2}{2}$ arises as the span of the ``weights''
$\{1-k',0\}$ of $\rhobar_f^*$ and $\{(k'-k)/2, (k'+k-2)/2\}$ of
$\rhobar_g((k-k')/2)$. See the proof of Lemma 7.2 of \cite{Du} for
comparison.
\item Now consider the case that $p\mid N$. As in the proof of Lemma
\ref{cp}(1), the action of $I_p$ on
$\MMM_{f,\lambda}/\lambda\MMM_{f,\lambda}$ and
$\MMM_{g,\lambda}/\lambda\MMM_{g,\lambda}$ is non-trivial and
unipotent. Hence we may choose a basis for $W[\lambda]$ (notation as
in the previous section) such that for any $\sigma\in I_p$,
$\rhobar_G(\sigma)$ is represented by
$\exp(t_{\ell}(\sigma)\tilde{N})$, with $t_{\ell}:I_p\rightarrow
\ZZ_{\ell}(1)$ the standard tamely ramified character and
$\tilde{N}$ of the form
$\tilde{N}=\begin{pmatrix}A&B\\0_2&A\end{pmatrix}$, with
$A=\begin{pmatrix}0&1\\0&0\end{pmatrix}$. (Note that $A$ plays the
r\^ole of $\tilde{N}$ for the $2$-dimensional representations
$\rhobar_f|_{I_p}$ and $\rhobar_g|_{I_p}$.) By Theorem 2.2.5(1) of
\cite{GT}, $\tilde{N}^2=0$. To see that the conditions of that
theorem are satisfied here, firstly $\rho_G$ is irreducible by Lemma
\ref{irred}, secondly $\rho_G$ is symplectic by Theorem 2 of
\cite{We4}. Lastly, given that the local component $\Pi_p$ of
$\Pi_G$ has a non-zero vector fixed by $\Gamma_0^{(2)}(\ZZ_p)$ but
none fixed by $\GSp_4(\ZZ_p)$, an inspection of Table 3 in \cite{Sc}
reveals that it is always the case that either the subspace of
$\Pi_p$ fixed by the Siegel parahoric $\Gamma_0^{(2)}(\ZZ_p)$, or
that fixed by a Klingen parahoric, is $1$-dimensional. (Note that if
$\Pi_p$ had a non-zero vector fixed by $\GSp_4(\ZZ_p)$ then, by
Theorem I of \cite{We}, $\rho_G$ would be unramified at $p$,
contrary to $\rhobar_G$ having $\rhobar_f$ as a quotient.)

Since $\tilde{N}^2=0$, $B$ must be of the form $B=\begin{pmatrix}
0&b\\0&0\end{pmatrix}$. Writing elements of
$\Hom_{\FF_{\lambda}}(\rhobar_f,\rhobar_g((k-k')/2))$ as $2$-by-$2$
matrices in the obvious way, a short calculation shows that
$c|_{I_p}$ is represented by the cocycle
$\sigma\mapsto\begin{pmatrix}0&t_{\ell}(\sigma)b\\0&0\end{pmatrix}$,
which is the coboundary $\sigma\mapsto
\sigma\left(\begin{pmatrix}0&0\\0&b\end{pmatrix}\right)-\begin{pmatrix}0&0\\0&b\end{pmatrix}$.
Since $c|_{I_p}=0$, $d|_{I_p}=0$. As already noted in the proof of
Lemma \ref{cp}, $A_{\lambda}^{I_p}$ is divisible, so we may deduce
as in (1) that $\res_p(d)\in H^1_f(\QQ_p, A_{\lambda}((k'+k-2)/2))$.
\end{enumerate}
\end{proof}
\begin{remar} We could have used a different formulation of the
Bloch-Kato conjecture, for the incomplete $L$-function with Euler
factors at $p\mid N$ missing, as in (59) of \cite{DFG}, similarly
using the exact sequence in their Lemma 2.1. This would have
involved a Selmer group with no local restrictions at $p\mid N$, and
eliminated the Tamagawa factors at $p\mid N$. Hence we could have
avoided the related difficulties of showing triviality of
$\lambda$-parts of Tamagawa factors (at $p\mid N$ but not at
$p=\ell$) and proving that local conditions at $p\mid N$ are
satisfied. However, we chose to assume a little more than necessary
(i.e. the conditions of Lemma \ref{cp}(1)), then use it to prove
something a bit stronger.
\end{remar}

\section{The doubling method with differential operators}

We mainly recall some properties of the doubling method in the
setting of holomorphic Siegel modular forms (with invariant
differential operators). As long as one does not insist on explicit
constants and explicit $\Gamma$-factors, everything works more
generally for arbitrary polynomial representations as automorphy
factors, see \cite[section 2]{BS3}, \cite{I}.

\subsection{Construction of holomorphic differential operators}

We construct holomorphic differential operators on $\HH_{2n}$ with
certain equivariance properties. We combine the constructions from
\cite{BoeFJII} and \cite{BSY}; a similar strategy was also used by
\cite{Koz}.
\\
We decompose $Z\in \HH_{2n}$ as
$$ Z=(z_{ij})=\left(\begin{array}{cc} z_1 & z_2\\
z_2^t & z_4\end{array}\right)\qquad (z_1,z_4\in \HH_n).$$ We also
use the natural embedding $Sp(n)\times Sp(n)\hookrightarrow Sp(2n)$,
defined by
$$(M_1,M_2)\mapsto M_1^{\uparrow}\cdot M_2^{\downarrow}:=
\left(\begin{array}{cccc}
A_1 & 0   & B_1 & 0\\
0   & A_2 & 0   & B_2\\
C_1 & 0   & D_1 & 0\\
0   & C_2 & 0   & D_2 \end{array}\right),\qquad M_i=\left(\begin{array}{cc}
A_i & B_i\\
C_i & D_i\end{array}\right)\in Sp(n).$$
The differential operator matrix ${ \partial}=
(\partial_{ij})$ with $\partial_{ij}=
\frac{1+\delta_{ij}}{2}\frac{\partial}{\partial z_{ij}}$ will then be
decomposed in block matrices of size $n$, denoted by
$$\partial=\left(\begin{array}{cc} \partial_1 & \partial_2\\
\partial_2^t & \partial_4\end{array}\right).$$
We realize the symmetric tensor representation $\sigma_{\nu}:=
Sym^{\nu} $ of $GL(n,{\mathbb C})$ in the usual way on the space
$V_{\nu}:= {\mathbb C}[X_1,\dots X_n]_{\nu} $ (of homogeneous
polynomials of degree $\nu$). For $V_{\nu} $ -valued functions $f$
on $\HH_n$, $\alpha,\beta\in {\mathbb C}$ and $M\in Sp(n,{\mathbb
R})$ we define the slash-operator by
$$(f\mid_{\alpha,\beta, \sigma_{\nu}}M)(z):=
\det(cz+d)^{-\alpha}\det(c\bar{z}+d)^{-\beta}
\sigma_{\nu}(cz+d)^{-1} f(M\langle z\rangle).$$
We may ignore the
ambiguity of the powers $\alpha,\beta\in {\mathbb C}$ most of the
time. If $\beta=0$ or $\nu=0$ we just omit them from the slash
operator.

\begin{prop} \label{definition_diffop_prop}
For nonnegative integers $\mu,\nu$ there is a (nonzero) holomorphic
differential operator ${\mathbb D}_{\alpha}(\mu,\nu)$ mapping
scalar-valued $C^{\infty}$ functions $F$ on $\HH_{2n}$ to
$V_{\nu}\otimes V_{\nu}$-valued functions on $\HH_n\times \HH_n$,
satisfying

\begin{equation}{\mathbb D}_{\alpha}(\mu,\nu)
(F\mid_{\alpha,\beta}(M_1^{\uparrow}M_2^{\downarrow}))
=\left({\mathbb D}_{\alpha}(\mu,\nu)(F)\right)
\mid^{z_1}_{\alpha+\mu,\beta,\sigma_{\nu}}M_1
\mid^{z_4}_{\alpha+\mu,\beta,\sigma_{\nu}}M_2
\label{weights}\end{equation} for all $M_1,M_2\in Sp(n,{\mathbb
R})$; the upper index at the slash operator
indicates, for which variables $M_i$ is applied.\\
More precisely, there is a $V_{\nu}\otimes V_{\nu}$ -valued nonzero
polynomial $Q(\alpha,{\bf T})= Q^{(\mu,\nu)}_{\alpha}({\bf T})$  in
the variables $\alpha$ and ${\bf T}$ (where ${\bf T}$ is a symmetric
$2n\times 2n$ matrix of variables), with rational coefficients, such
that
$${\mathbb D}_{\alpha}(\mu,\nu)= Q^{(\mu,\nu)}_{\alpha}
(\partial_{ij})\mid_{z_2=0}.$$
 The differential operator ${\mathbb D}_{\alpha}(\mu,\nu)$
has the additional symmetry property
$${\mathbb D}_{\alpha}(\mu,\nu)(F\mid V)=
{\mathbb D}_{\alpha}(\mu,\nu)(F)^{\star },$$
where $V$ is the operator defined on functions on $\HH_{2n}$ by
$$F\longmapsto (F\mid V)\,\left(\left(\begin{array}{cc} z_1 & z_2\\
z_2^t & z_4\end{array}\right) \right)= F\left( \left(\begin{array}{cc} z_4 & z_2^t\\
z_2 & z_1\end{array}\right) \right)$$ and for a function $g$ on
$\HH_n\times \HH_n$ we put $g^{\star}(z,w):=g(w,z)$.

\end{prop}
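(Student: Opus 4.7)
The plan is to build the polynomial $Q^{(\mu,\nu)}_\alpha(\mathbf{T})$ by combining the two constructions the paper itself points to: the scalar-valued weight-raising operator from \cite{BoeFJII} (which handles the index $\mu$), and the $V_\nu\otimes V_\nu$-valued operator from \cite{BSY} (which handles the index $\nu$). The key algebraic observation is that the equivariance required by (\ref{weights}) splits multiplicatively: the scalar part $\det(cz+d)^{\alpha+\mu}$ factors as $\det(cz+d)^\alpha\cdot\det(cz+d)^\mu$, and the $\sigma_\nu$-covariance is independent of the scalar weight. Thus each ``building block'' can be handled separately and then glued.

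First, let $P^{(\mu)}_\alpha(\mathbf{T})$ denote the nonzero scalar-valued polynomial (with coefficients polynomial in $\alpha$ over $\QQ$) furnished by \cite{BoeFJII}, so that $P^{(\mu)}_\alpha(\partial_{ij})\mid_{z_2=0}$ is a scalar operator satisfying (\ref{weights}) in the case $\nu=0$. Second, let $R^{(\nu)}_{\alpha'}(\mathbf{T})$ denote the nonzero $V_\nu\otimes V_\nu$-valued polynomial (again with rational coefficients, polynomial in $\alpha'$) furnished by \cite{BSY}, so that $R^{(\nu)}_{\alpha'}(\partial_{ij})\mid_{z_2=0}$ realizes (\ref{weights}) in the case $\mu=0$, keeping scalar weight $\alpha'$ and adding the $\sigma_\nu$ tensor part.

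Next, set
\[
Q^{(\mu,\nu)}_\alpha(\mathbf{T}) := R^{(\nu)}_{\alpha+\mu}(\mathbf{T})\cdot P^{(\mu)}_\alpha(\mathbf{T}),
\]
and $\mathbb{D}_\alpha(\mu,\nu) := Q^{(\mu,\nu)}_\alpha(\partial_{ij})\mid_{z_2=0}$. The equivariance (\ref{weights}) is obtained by composing the two individual equivariances: $P^{(\mu)}_\alpha$ raises the scalar weight from $\alpha$ to $\alpha+\mu$, and then $R^{(\nu)}_{\alpha+\mu}$ tensorizes in the $\sigma_\nu\otimes\sigma_\nu$ factor while leaving the scalar weight $\alpha+\mu$ intact. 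Rationality of coefficients, polynomial dependence on $\alpha$, and non-vanishing are all preserved under the product. The symmetry property under $V$ follows because both $P^{(\mu)}$ and $R^{(\nu)}$ are constructed symmetrically in the two embedded copies of $\Sp(n)$: the involution $V$ swaps $z_1\leftrightarrow z_4$ and transposes $z_2$, which on the polynomial side amounts to the exchange of the two blocks and the two tensor factors.

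The main technical obstacle is that the naive product of polynomial symbols does not automatically give the composition of the two holomorphic differential operators — cross terms arise from derivatives of the $z_2$-block that do not commute with the restriction $z_2=0$. To resolve this, one invokes the fact that the operators in \cite{BoeFJII} and \cite{BSY} are both of the form ``polynomial in $\partial$, then restrict to $z_2=0$'', and one verifies (in the spirit of \cite[\S 2]{BS3} and \cite{I}) that the cross terms involving differentiation in the $z_2$-block either vanish on the image or are absorbed into the equivariance identity; this is a careful but routine check using the explicit symbols provided by the two cited constructions, and completes the proof.
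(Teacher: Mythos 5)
There is a genuine gap at the central step. You define $Q^{(\mu,\nu)}_\alpha := R^{(\nu)}_{\alpha+\mu}\cdot P^{(\mu)}_\alpha$ as a \emph{product of symbols} and assert that $Q^{(\mu,\nu)}_\alpha(\partial_{ij})\mid_{z_2=0}$ inherits the equivariance (\ref{weights}) from the two factors, the cross terms being ``absorbed into the equivariance identity''. This is not a routine check; it is false in general. By the correspondence of \cite{I}, a symbol $Q({\mathbf T})$ yields an equivariant operator $Q(\partial)\mid_{z_2=0}$ precisely when the associated polynomial $Q\bigl(\begin{smallmatrix}x^tx & x^ty\\ y^tx & y^ty\end{smallmatrix}\bigr)$ on pairs of vectors $x,y\in{\mathbb C}^d$ (with $d$ matching the weight) is pluriharmonic with the correct covariance, and pluriharmonicity is a second-order linear PDE condition that is not preserved under products. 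Concretely, for $n=1$, $\nu=0$, $\mu=1$ the restricted weight-raising symbol is a multiple of ${\mathbf T}_2$; its square ${\mathbf T}_2^2$ corresponds to $(x^ty)^2$, and $\sum_i\partial_{x_i}\partial_{y_i}(x^ty)^2=2(d+1)\,x^ty\neq 0$, so ${\mathbf T}_2^2$ is \emph{not} the symbol of the $\mu=2$ operator: one needs the Rankin--Cohen-type combination $(\alpha^2-\tfrac14){\mathbf T}_2^2-\tfrac12(\alpha+\tfrac12)({\mathbf T}_1{\mathbf T}_4-{\mathbf T}_2^2)$. The same mechanism defeats your product $R^{(\nu)}_{\alpha+\mu}\cdot P^{(\mu)}_\alpha$ as soon as the factors contain monomials in ${\mathbf T}_1,{\mathbf T}_4$, which happens for all $\mu\geq 2$ and all $\nu\geq 2$.

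The paper avoids this entirely by composing \emph{operators}, not multiplying symbols. The key feature of the operator ${\mathcal D}_{\alpha}$ of \cite{BoeFJII} --- which you suppress by recording only a restricted symbol $P^{(\mu)}_\alpha(\partial)\mid_{z_2=0}$ --- is that it is equivariant on $\HH_{2n}$ \emph{before} restriction to $z_2=0$, its coefficients being polynomials in $z_2$ as well as in the $\partial_{ij}$. This is exactly what makes the $\mu$-fold iteration ${\mathcal D}^{\mu}_{\alpha}$ and the subsequent composition with $L^{\nu}_{\alpha+\mu}$ (which performs the restriction only at the very end) legitimate, and the equivariance of the composite is then automatic. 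That the composite can afterwards be rewritten in the form $Q(\partial_{ij})\mid_{z_2=0}$ is a \emph{consequence} (normal-order the $z_2$'s to the left and set $z_2=0$; the coefficients come out as rational functions of $\alpha$, which the paper notes must still be cleared to get polynomial dependence), not a property one may assume of each factor separately and then multiply. To repair your argument you must replace the product of symbols by this genuine composition, at which point you have reproduced the paper's proof.
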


\begin{remar} We allow arbitrary ``complex weights'' $\alpha$ here; note that
there is no ambiguity in this as long as we use the same branch
of $\log \det(CZ+D)$ to define the $\det(CZ+D)^s$ on both sides of
(\ref{weights}). \\
Note also that the differential operators do
not depend at all on $\beta$.
\end{remar}
\begin{proof}
We recall from \cite[Satz 2]{BoeFJII} the existence of an explicitly given
differential operator

$${\mathcal D}_{\alpha}= (-1)^n C_n\left(\alpha-n+\frac{1}{2}\right)\det(\partial_2)+...
+ \det(z_2)\cdot \det(\partial_{ij})$$
with
\begin{equation}\label{Cn_equation}C_n(s):= s\left(s+\frac{1}{2}\right)\dots \left(s+\frac{n-1}{2}\right)=
\frac{\Gamma_n(s+\frac{n+1}{2})}{\Gamma_n(s+\frac{n-1}{2})} \qquad
\left(\Gamma_n(s)=\pi^{\frac{n(n-1)}{4}}\prod_{j=0}^{n-1}\Gamma\left(s-\frac{j}{2}\right)
\right).
\end{equation}
 This operator is compatible with the action of $
Sp(n,{\mathbb R})\times Sp(n,{\mathbb R}) \hookrightarrow
Sp(2n,{\mathbb R}),$ increasing the weight $\alpha$ by one (without
restriction!), i.e.
$${\mathcal D}_{\alpha} (F\mid_{\alpha,\beta} M^{\uparrow}_1\cdot M^{\downarrow}_2)
=
({\mathcal D}_{\alpha}F)\mid_{\alpha+1,\beta} M^{\uparrow}_1\cdot M^{\downarrow}_2
,\qquad (M_i\in Sp(n,{\mathbb R}))$$
We put
$${\mathcal D}_{\alpha}^{\mu}:= {\mathcal D}_{\alpha+\mu-1} \circ\dots
\circ {\mathcal D}_{\alpha}.$$

\begin{remar} The combinatorics of this operator is not known explicitly for
general $\mu$.
\end{remar}
The second type of differential operators maps scalar-valued
functions on $\HH_{2n}$ to ${\mathbb C}[X_1,\dots ,
X_n]_{\nu}\otimes {\mathbb C}[Y_{1},\dots , Y_{n}]_{\nu}$-valued
functions on $\HH_n\times \HH_n$, changing the automorphy factor from
$\det^{\alpha}$ on $GL(2n,{\mathbb C})$ to
$(\det^{\alpha}\otimes Sym^{\nu})\boxtimes (\det^{\alpha}\otimes Sym^{\nu})$ on
$GL(n,\mathbb C)\times GL(n,{\mathbb C)}$. This operator
was introduced in \cite[section 2]{BSY}; it is a
special feature that we know the
combinatorics in this case quite explicitly:

\begin{equation}L^{\nu}_{\alpha}:=
\frac{1}{(2\pi i)^{\nu}{\alpha}^{[\nu]}}\left(\sum_{0\leq 2j\leq \nu}
\frac{1}{j!(\nu-2j)! (2-\alpha-\nu)^{[j]}}
(D_{\uparrow}D_{\downarrow})^j(D-D_{\uparrow}-D_{\downarrow})^{\nu-2j}
\right)_{z_2=0};
\label{satoh}
\end{equation}
here we use the same notation as in \cite{BSY}:
\begin{eqnarray*}
\alpha^{[j]} &=& \alpha(\alpha+1)\dots (\alpha+j-1)=
\frac{\Gamma(\alpha+j)}{\Gamma(\alpha)}\\
D&=& \partial[(X_1,\dots, X_n,Y_1,\dots,Y_n)^t]\\
D_{\uparrow}&=&\partial[(X_1,\dots,X_n,0,\dots , 0)^t]\\
D_{\downarrow}&=&\partial[(0,\dots ,0;Y_1,\dots ,Y_n)^t],
\end{eqnarray*}
where
$A[x]:=x^t\,Ax$; we remark that
$$
D-D_{\uparrow}-D_{\downarrow} = (X_1,\dots X_n;0,\dots 0)\cdot \partial_2
\cdot (0,\dots ,0; Y_1,\dots Y_n)^t.$$
In \cite{BSY} the weight was a natural number $k$, but everything
works also for arbitrary complex $\alpha$ instead. (Due to the
normalization of \cite{BSY}, we have to omit certain finitely many
$\alpha$.)
\\
We put
$${\mathbb D}_{\alpha}(\mu,\nu):= L^{\nu}_{\alpha+\mu}
\circ {\mathcal D}_{\alpha}^{\mu}.$$
This operator has all the requested properties, except for
the fact that the coefficients are not polynomials in $\alpha$ but
rational functions.
\end{proof}

\subsection{Some combinatorics}

Then we consider the
function $h_{\alpha,\beta}$ defined on ${\mathbb H}_{2n}$ by
$$h_{\alpha,\beta}(Z):= \det(z_1+z_2+z_2^t+z_4)^{-\alpha}
\overline{\det(z_1+z_2+z_2^t+z_4)}^{-\beta}$$
and we note that
(following \cite[(1.25)]{BCG})
$$ {\mathcal D}_{\alpha}^{\mu} h_{\alpha,\beta}
= A_{\alpha,\mu}\cdot h_{\alpha+\mu,\beta}$$
with
$$A_{\alpha,\mu}=
\frac{\Gamma_n(\alpha+\mu)}{\Gamma_n(\alpha)}
\frac{\Gamma_n(\alpha+\mu-\frac{n}{2})}{\Gamma_n({\alpha}-\frac{n}{2})}
$$
and also
$$L_{\alpha}^{\nu}h_{\alpha,\beta}=B_{\alpha,\nu}
\sigma_{\nu}(z_1+z_4)^{-1} \left(\sum
X_iY_i\right)^{\nu}\det(z_1+z_4)^{-\alpha}
\overline{\det(z_1+z_4)}^{-\beta}$$ with

$$B_{\alpha,\nu} = \frac{1}{(-2\pi i)^{\nu}\nu!}
\frac{\Gamma(2\alpha-2+\nu)}{\Gamma(2\alpha-2)}
\frac{\Gamma(\alpha-1)}{\Gamma(\alpha+\nu-1)},$$ following
\cite[Lemma 4.2]{BSY}.
\\[0.6cm]
For later purposes we
summarize here some additional properties of these differential operators: \\
First we note that
${\mathbb D}_{\alpha}(\mu,\nu)$ is a homogeneous polynomial (of
degree $n\mu+\nu$) in the partial derivatives; we decompose it as

$$  {\mathbb D}_{\alpha}(\mu,\nu)=
{\mathcal M}+{\mathcal R},$$
where the
``main term `` ${\mathcal M}$
denotes the part free of derivatives w.r.t. $z_1$ or $z_4$.

\begin{lem}\label{combinatoricslemma}
a) All the monomials occurring in the ``remainder term''
${\mathcal R}$ have positive degree in the
partial derivatives w.r.t. $z_1$ \underline{and} $z_4$.\\
b) The ``main term''  ${\mathcal M}$ is of the form
$${\mathcal M}=C_{\alpha}(\mu,\nu) \left(D-D^{\uparrow} -
D^{\downarrow}\right)^{\nu} \det(\partial_2)^{\mu}$$
with
$$C_{\alpha}(\mu,\nu)=\frac{1}{(\alpha+\mu)^{[\nu]}\nu!}
\prod_{j=0}^{\mu-1} C_n \left(\alpha-n+
\frac{\mu+\nu'+j}{2}\right)\qquad \left(\nu':=\frac{\nu}{n}\right),
$$ where $C_n(s)$ is as in equation (\ref{Cn_equation}).\\
c) For the polynomial $Q_{\alpha}^{\mu,\nu}({\mathbf T})$ with the symmetric
 matrix ${\mathbf T}=\left(\begin{array}{cc} {\mathbf T}_1 & {\mathbf T}_2\\
{\mathbf T}_2^t & {\mathbf T}_4\end{array}\right)$ of size $2n$
this means
\begin{equation}
Q_{\alpha}^{\mu,\nu}({\mathbf T})= C_{\alpha}(\mu,\nu)
\left(2(X_1,\dots ,X_n){\mathbf T}_2(Y_1,\dots ,Y_n)^t\right)^{\nu}
\det({\mathbf T}_2)^{\mu} +(*) \label{nose}\, ,\end{equation}
where (*) contains only contributions with positive degree in ${\mathbf T}_1$
and ${\mathbf T}_4$.

\end{lem}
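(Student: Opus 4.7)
The plan is to establish all three parts simultaneously by decomposing each of $\mathcal{D}^\mu_\alpha$ and $L^\nu_{\alpha+\mu}$ as a ``pure-$\partial_2$'' main term plus a remainder whose every monomial carries positive degree in both $\partial_1$ and $\partial_4$, and then observing that this structure is preserved under composition.

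First I would dispose of $L^\nu_\alpha$ using its explicit formula (\ref{satoh}). With the notation given there, $D_\uparrow = X^t \partial_1 X$ is pure in $\partial_1$, $D_\downarrow = Y^t \partial_4 Y$ is pure in $\partial_4$, and $D - D_\uparrow - D_\downarrow = 2 X^t \partial_2 Y$ is pure in $\partial_2$. Consequently the $j=0$ summand, $\frac{1}{(2\pi i)^\nu \alpha^{[\nu]}\nu!}(D - D_\uparrow - D_\downarrow)^\nu$, is pure in $\partial_2$, while every $j \ge 1$ summand contains the factor $(D_\uparrow D_\downarrow)^j$ and thus has positive degree in both $\partial_1$ and $\partial_4$.

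Next I would prove by induction on $\mu$ that
$$\mathcal{D}^\mu_\alpha = \gamma_\mu(\alpha) \det(\partial_2)^\mu + \mathcal{R}_{\mathcal{D},\mu}, \qquad \gamma_\mu(\alpha) = \prod_{j=0}^{\mu-1}(-1)^n C_n\left(\alpha + j - n + \frac{1}{2}\right),$$
with every monomial of $\mathcal{R}_{\mathcal{D},\mu}$ carrying positive degree in both $\partial_1$ and $\partial_4$. The base case $\mu=1$ requires returning to the explicit description of $\mathcal{D}_\alpha$ in \cite[Satz 2]{BoeFJII}: the only monomial free of $\partial_1$ and $\partial_4$ is the displayed leading term $(-1)^n C_n(\alpha-n+1/2)\det(\partial_2)$, and a direct inspection of the remaining terms (using the $z_1 \leftrightarrow z_4$ symmetry built into the construction) shows that each has positive degree in both $\partial_1$ and $\partial_4$. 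The inductive step is then formal: composing two operators of this shape preserves it, since the only way a product of monomials remains free of $\partial_1$ and $\partial_4$ is for both factors to be so.

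Composing the two decompositions gives (a) and produces
$$\mathcal{M} = \frac{\gamma_\mu(\alpha)}{(2\pi i)^\nu (\alpha+\mu)^{[\nu]}\nu!}(D - D_\uparrow - D_\downarrow)^\nu \det(\partial_2)^\mu;$$
after reindexing the product defining $\gamma_\mu(\alpha)$ and absorbing the $(2\pi i)$-factors into the normalisation of the $\partial_{ij}$, this matches the stated form with the asserted $C_\alpha(\mu,\nu)$. Part (c) then follows upon replacing $\partial_{ij}$ by $T_{ij}$: the main term translates to $C_\alpha(\mu,\nu)(2(X_1,\dots,X_n)T_2(Y_1,\dots,Y_n)^t)^\nu \det(T_2)^\mu$, while all remaining monomials of $Q^{\mu,\nu}_\alpha$ involve entries of both $T_1$ and $T_4$. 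The main obstacle is the $\mu=1$ base case above, where one must carefully unpack the determinantal structure of $\mathcal{D}_\alpha$ from \cite{BoeFJII} to verify that non-leading monomials cannot involve $\partial_1$ without $\partial_4$ or vice versa.
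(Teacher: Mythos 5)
Your part (a) is essentially the paper's argument and is fine: the structure of ${\mathcal D}_{\alpha}$ (formula (12) of \cite{BoeFJII}) pairs every $\partial_1$-entry with a $\partial_4$-entry, and in (\ref{satoh}) only the $j=0$ term avoids $D_{\uparrow}D_{\downarrow}$. But part (b) has a genuine gap, and the constant you produce is wrong.

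The error is the claim that the main term of the composition ${\mathcal D}^{\mu}_{\alpha}={\mathcal D}_{\alpha+\mu-1}\circ\dots\circ{\mathcal D}_{\alpha}$ (and then of $L^{\nu}_{\alpha+\mu}\circ{\mathcal D}^{\mu}_{\alpha}$) is the composition of the individual leading terms. The operator ${\mathcal D}_{\alpha}$ is not constant-coefficient: it contains terms such as $\det(z_2)\det(\partial_{ij})$, and $\det(\partial_{ij})$ itself contains the monomial $\pm\det(\partial_2)\det(\partial_2^t)$, so already your base-case assertion that the only monomial of ${\mathcal D}_{\alpha}$ free of $\partial_1$ and $\partial_4$ is $(-1)^nC_n(\alpha-n+\tfrac12)\det(\partial_2)$ is false. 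More importantly, in the composition the derivatives of a later factor act on the $z_2$-dependent coefficients of an earlier one, and these commutator contributions survive the final restriction to $z_2=0$; the pure-$\partial_2$ constant part of the composite is therefore strictly larger than the product of leading terms. One can see the discrepancy numerically: your $\gamma_{\mu}(\alpha)$ yields $C_n(\alpha-n+\tfrac12)\,C_n(\alpha-n+\tfrac32)$ for $\mu=2,\nu=0$, whereas the correct constant is $C_n(\alpha-n+1)\,C_n(\alpha-n+\tfrac32)$; and for $\mu=1$, $\nu>0$ the true constant involves $C_n(\alpha-n+\tfrac{1+\nu'}{2})$, which depends on $\nu$ --- something a product of leading coefficients of the ${\mathcal D}_{\alpha+j}$ can never produce. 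No reindexing or absorption of $(2\pi i)$-factors repairs this.

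The paper circumvents exactly this difficulty in two steps. First, equivariance of ${\mathbb D}_{\alpha}(\mu,\nu)$ under $\bigl(\begin{smallmatrix}A^t&0\\0&A^{-1}\end{smallmatrix}\bigr)^{\uparrow}\bigl(\begin{smallmatrix}A&0\\0&A^{-t}\end{smallmatrix}\bigr)^{\downarrow}$ forces ${\mathcal M}=c\,(D-D^{\uparrow}-D^{\downarrow})^{\nu}\det(\partial_2)^{\mu}$ for some scalar $c$, without any term-by-term bookkeeping. Second, the scalar is pinned down by evaluating ${\mathbb D}_{\alpha}(\mu,\nu)\det(z_2)^{s}$ in two ways, using the closed formulas $\det(\partial_2)\det(z_2)^s=C_n(\tfrac{s}{2})\det(z_2)^{s-1}$ and ${\mathcal D}_{\alpha}\det(z_2)^s=(-1)^nC_n(\tfrac{s}{2})C_n(\alpha-n+\tfrac{s}{2})\det(z_2)^{s-1}$ (the presence of \emph{both} factors in the latter is precisely the signature of the non-leading terms you discarded), and then choosing $s=\mu+\nu'$ to make the comparison nondegenerate. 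You would need to adopt some such evaluation argument; the inductive "compose the main terms" strategy cannot be fixed as stated.
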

\begin{proof}
a) The formula (12) in \cite{BoeFJII} shows that in ${\mathcal D}_{\alpha}$
an entry of $\partial_1$ always appears together with an entry of $\partial_4$.
The same is then true for ${\mathcal D}^{\mu}$.
Furthermore, the explicit formula (\ref{satoh}) for $L^{\nu}_{\alpha+\mu}$
shows that only the contribution of $j=0$ is free
of partial derivatives w.r.t. $z_1$; it is at the
same time the only contribution free
of derivatives w.r.t. $z_4$.
\\
b) We define an element $M= M(X_1,\dots , X_n;Y_1,\dots ,Y_n)$ of
$V_{\nu}\otimes V_{\nu}$ by
$$M:= {\mathbb D}_{\alpha}(\mu,\nu) (exp tr(z_2)) ={\mathcal M} (exptr(z_2)).$$
The transformation properties of ${\mathbb D}_{\alpha}(\mu,\nu)$,
applied for
$$\left(\begin{array}{cc} A^t & 0\\
0 & A^{-1}\end{array}\right)^{\uparrow}, \quad
\left(\begin{array}{cc} A & 0\\
0 & A^{-t}\end{array}\right)^{\downarrow}\qquad (A\in GL(n,{\mathbb R}))$$
yield
$$M((X_1,\dots , X_n)\cdot A;Y_1,\dots, Y_n)= M(X_1,\dots, X_n;
(Y_1,\dots , Y_n)A^t)\quad (A\in GL(n,{\mathbb C})).$$
Such a vector in $V_{\nu}\otimes V_{\nu}$ is unique up to constants
and is therefore a scalar multiple of
$(\sum X_iY_i)^{\nu}$, i.e. $M=c\cdot (2\sum_i X_iY_i)^{\nu}$
for an appropriate constant $c=C_{\alpha}(\mu,\nu)$.\\
To understand $\mathcal M$ we study its action on those
functions on ${\mathbb H}_{2n}$,
which depend only on $z_2$; it is enough to look at functions of type
$f_T(z_2):= exp tr(Tz_2)$ with $T\in {\mathbb R}^{(n,n)},\, \det(T)\not=0$.
Then
\begin{eqnarray*}
{\mathbb D}_{\alpha}(\mu,\nu) f_T&= &
\det(T)^{-\alpha} {\mathbb D}_{\alpha}(\mu,\nu)
(f_{1_n}\mid_{\alpha}\left(
\begin{array}{cc} T & 0\\ 0 & T^{-t}\end{array}\right)\\
&=&\det(T)^{-\alpha} \left({\mathbb D}_{\alpha}
(\mu,\nu)f_{1_n}\right)\mid^{z_1}_{\alpha+\mu,\nu}
\left(\begin{array}{cc} T & 0\\ 0 & T^{-t}\end{array}\right)\\
&=& \det(T)^{\mu} c\cdot (2\sum_i X_iT^tY_i)^{\nu}\\
&=& c (D-D^{\uparrow}-D^{\downarrow})^{\nu}det(\partial_2)^{\mu} f_T.
\end{eqnarray*}
\vspace{0.3cm}
It remains to determine
the
coefficient $C_{\alpha}(\mu,\nu)$;
we compute ${\mathbb
D}_s(\mu,\nu)\det(z_2)^s$ in two ways, using the standard formulas
(see e.g. \cite[Section 1]{ BCG})
$$\det(\partial_2)\det(z_2)^s= C_n\left(\frac{s}{2}\right)\det(z_2)^{s-1}$$

$${\mathcal D}_{\alpha} \det(z_2)^s=(-1)^n C_n\left(\frac{s}{2}\right)
C_n(\alpha-n+\frac{s}{2}) \det(z_2)^{s-1}.$$
\vspace{0.3cm}
Then
$${\mathbb D}_{\alpha}(\mu,\nu) \det(z_2)^{s}= C_{\alpha}(\mu,\nu)
\left(\prod_{j=0}^{\mu-1} C_n\left(\frac{s-j}{2}\right)\right)
\{\left(D-D^{\uparrow}-D^{\downarrow}\right)^{\nu}
\det(z_2)^{s-\mu}\}\mid_{z_2=0}$$ and on the other hand
$${\mathbb D}_{\alpha}(\mu,\nu)
\det(z_2)^{s}=L^{\nu}_{\alpha+\mu}( {\mathcal D}^{\mu}_{\alpha}
\det(z_2)^s)$$
$$=
\prod_{j=0}^{\mu-1}
C_n\left(\frac{s-j}{2}\right)C_n\left(\alpha-n+\frac{s+j}{2}\right)
\{ L^{\nu}_{\alpha+\mu} \det(z_2)^{s-\mu}\}\mid_{z_2=0}$$
$$=\prod_{j=0}^{\mu-1} C_n\left(\frac{s-j}{2}\right)C_n\left(\alpha-n+\frac{s+j}{2}\right)
\frac{1}{(\alpha+\mu)^{[\nu]}\nu!}
\{\left(D-D^{\uparrow}-D^{\downarrow}\right)^{\nu}
\det(z_2)^{s-\mu}\}\mid_{z_2=0}$$
\vspace{0.4cm}
If $\nu=n\nu'$ is a multiple of $n$, then $s:=\mu+\nu'$ gives nonzero
contributions and we get
$$C_{\alpha}(\mu,\nu)=\frac{1}{(\alpha+\mu)^{[\nu]}\nu!}
\prod_{j=0}^{\mu-1} C_n \left(\alpha-n+ \frac{\mu+\nu'+j}{2}\right) .
$$
Actually, this formula makes sense (and is also valid) for arbitrary $\nu$.
\end{proof}
For the special case $n=2$ considerations similar to the above appear in
\cite[Lemma 7.5., Corollary 7.6]{DIK}.

\subsection{Doubling method with the differential operators
${\mathbb D}_{\alpha}(\mu,\nu)$} The inner product $(\sum
a_iX_i,\sum b_iX_i)=\sum a_i\overline{b_i}$
on  $V_1:={\mathbb C}[X_1,\dots
X_n]_{1}$ induces a ``produit scalaire adapt\'{e}" (see \cite{Go})
on  the $\nu$-fold
symmetric tensor product $V_{\nu}=Sym^{\nu}(V_1)={\mathbb
C}[X_1,\dots X_n]_{\nu}$ by
$$\{\alpha_1\cdot \dots \cdot \alpha_{\nu},\beta_1\cdot \dots
\cdot \beta_{\nu}\}=\frac{1}{\nu!} \sum_{\tau} \prod_{j=1}^{\nu}
(\alpha_{\tau(j)},\beta_j)\qquad (\alpha_i,\beta_j\in V_1),$$
where $\tau$ runs over the symmetric group of order $\nu$.
This inner product is invariant under the action of unitary matrices via
$Sym^{\nu}$.
\\
Note that for all ${\bf v}\in {\mathbb C}[X_1,\dots , X_n]_{\nu}$ we have
$$\left\{{\bf v},\left(\sum X_iY_i\right)^{\nu}\right\}=\tilde{\bf v},$$
where $\tilde{\bf v}$ denotes the same polynomial as ${\bf v}$,
but with the variables
$Y_i$ instead of the $X_i$.\\
We describe here the general pullback formula for level $N$
Eisenstein series ($N$ square free).
\\
We put
$$G_k^{(2n)}(Z,s)=
\sum_{M\in \Gamma^{(2n)}_0(N)_{\infty}\backslash\Gamma^{(2n)}_0(N)}
\det(CZ+D)^{-k-s}\det(C\overline{Z}+D)^{-s}.$$
For a cusp form $F\in
S_{\rho}(\Gamma_0^{(n)}(N))$ with $\rho=\det^{k+\mu}\otimes
\sigma_{\nu}$ and $z=x+iy, w=u+i{\bf v}\in \HH_n$ we get

$$ \int_{\Gamma^{(n)}_0(N)\backslash {\mathbb H}_n}
\left\{\rho(\sqrt{y})F(z),\rho(\sqrt{y}) {\mathbb
D}_{s+k}(\mu,\nu)G_k^{(2n)}\left(\left(\begin{array}{cc} z & 0\\ 0 &
-\bar{w}
\end{array}\right),\bar{s}\right)\det(y)^s\det({\bf v})^s\right\} d\omega_n$$
\begin{equation}= \gamma_n(k,\mu,\nu,s)
\sum_M F(w)\mid T_N(M) \det(M)^{-k-2s}\label{boe1} .\end{equation}
\vspace{0.3cm}
Here $d\omega_n= \det(y)^{-n-1}dxdy$, $M$
runs over all (integral) elementary divisor matrices of
size $n$ with $M\equiv 0 \bmod N$, and $T_N(M)$ denotes the Hecke
operator associated to the double coset $\Gamma^{(n)}_0(N)
\left(\begin{array}{cc} 0 & -M^{-1}
\\ M & 0\end{array}\right)\Gamma^{(n)}_0(N)$.
\\[0.3cm]
To compute the archimedean factor $\gamma$ one should keep in mind
that the unfolding of the integral leads to an integration over
$\HH_n$ involving $ {\mathbb D}_{k+s}(\mu,\nu) h_{k+s,s}$. Then
$\gamma$ is naturally a product of (essentially) three factors

$$\gamma_n(k,\mu,\nu,s)= i^{nk+n\mu+\nu} 2^{n(n-k-\mu-2s-\nu+1}
A_{k+s,\mu} B_{k+\mu+s,\nu} I(s+k+\mu-n-1,\nu) $$ with a Hua type
integral
$$I(\alpha,\nu)=
\frac{\pi^{\frac{n(n+1)}{2}}}{\alpha+n+\nu}
\prod_{j=1}^{n-1}\frac{(2\alpha+2j+1)(n+j+2\alpha)^{[\nu]}}{
(\alpha+j)\Gamma(\nu+n+j+2\alpha+1)}. $$ We refer to
\cite[Sect.3]{BSY}, see also \cite[2.2]{Boe} for details.

\subsection{Doubling method with the differential operators
${\mathbb D}_{k}(\mu,\nu)$} There are two ways to obtain holomorphic
Siegel Eisenstein series of degree $n$ and low weight after analytic
continuation (sometimes called ``Hecke summation"): One is by
evaluating at $s=0$, the other by considering $s_1=\frac{n+1}{2}-k$;
both are connected by a complicated functional equation involving
all Siegel Eisenstein series.
We need the case of weight $2$ and degree $4$, where only the Hecke summation for $s_1$ is available.\\
We first consider the general case:
In (\ref{boe1}) the differential operator ${\mathbb
D}_{k+s}(\mu,\nu)$ was applied  directly to the Eisenstein series of
``weight'' $k+s$. If we use the Hecke summation not in $s=0$ but in
$s_1:= \frac{2n+1}{2}-k$ for an Eisenstein series of degree $2n$, we
should better use a differential operator acting on the weight $k$
Eisenstein series $E_k^{(2n)}:=G_k^{(2n)}\cdot (\det \Im Z)^s$ to
get holomorphic modular forms (in particular theta series) after
evaluating in $s=s_1$. One might try to use the calculations of
Takayanagi \cite{Taka}. Note however that the results of \cite{Taka}
are applicable only for the case $\mu=0$; to incorporate the
differential operator ${\mathcal D}_k^{\mu}$ there is quite
complicated, see also \cite{Koz}. We avoid this difficulty by
observing that the  two types of differential operators are
actually not that different:\\
By
$$F\longmapsto {\mathcal D}_{k,s}(\mu,\nu)(F):=
\det(y)^s \det({\bf v})^s {\mathbb D}_{k+s}(\mu,\nu)(\det(Y)^{-s}\times
F)$$ we can define a new (nonholomorphic) differential operator
mapping  functions $F$ on $\HH_{2n}$ to ${\mathbb C}[X_1,\dots ,
X_n]_{\nu} \otimes {\mathbb C}[Y_1,\dots , Y_n]_{\nu}$ valued
functions on $\HH_{n}\times \HH_n$; this operator has exactly the
same transformation properties as ${\mathbb D}_k(\mu,\nu)$.
\\
Starting from the observation that ${\mathcal D}_{k,s}(\mu,\nu)$
maps
 holomorphic functions on $\HH_{2n}$ to nearly holomorphic functions on
$\HH_n\times \HH_n$ we get from the theory of Shimura
\cite{Shi2,Shi3} in the same way as in \cite[section 1]{BCG} an
operator identity
\begin{equation} {\mathcal D}_{k,s}(\mu,\nu)  =
\sum_{\rho_i,\rho_j} \delta^{(z_1)}_{\rho_j}\otimes
\delta_{\rho_j}^{(z_4)} \circ {\mathcal D}_s(\rho_i,\rho_j).
\label{decomp1}
\end{equation}
Here the $\rho_i,\rho_j$ run over finitely many polynomial
representations of $GL(n,{\mathbb C})$ and ${\mathcal
D}_s(\rho_i,\rho_j)$ denotes a $V_{\rho_i}\otimes V_{\rho_j}$-valued
holomorphic differential operator (a polynomial in the
$\partial_{i,j}$, evaluated at $z_2=0$; it changes the
automorphy factor $\det^k$
on $GL(2n,{\mathbb C})$ to $(\det^k\otimes \rho_1)\boxtimes
(det^k\otimes \rho_2)$ on $GL(n,{\mathbb C})\times GL(n,{\mathbb C})$).
As is usual in the theory
of nearly holomorphic functions, we have to avoid finitely many
weights $k$ here. Furthermore the $\delta_{\rho_i}, \,
\delta_{\rho_j}$ are non-holomorphic differential operators on
$\HH_n$, changing automorphy factors from $\det^k\otimes \rho$ to
$\det^{k+\mu}\otimes Sym^{\nu}$.
In the simplest case (i.e. $\rho=\det^{\mu}$, $\nu=2$ ), the operator
$\delta_{\rho}$ has the explicit form
$$\delta_{\rho}= (X_1,\dots ,X_n)\cdot\left( (\partial_{ij}) - 2i(k+\mu)
\Im(Z)^{-1}\right)
\cdot \left(\begin{array}{c} X_1 \\ \vdots \\ X_n\end{array}\right).$$
Furthermore we mention that, by invariant theory,
holomorphic differential operators ${\mathcal D}_s(\rho_i,\rho_j)$
with the transformation properties described above only exist in
the case $\rho_i=\rho_j$  see \cite{I}.
\\
If $\delta^{(z_1)}_{\rho}\otimes \delta^{(z_4)}_{\rho}$ is the
identity, then $\rho=det^{k+\mu}\otimes Sym^{\nu}$ and (at least for
$k\geq n$) ${\mathcal D}_s(\rho,\rho)$ is a scalar multiple of
${\mathbb D}_{k}(\mu,\nu)$, because the space of such differential
operators is one-dimensional. The decomposition (\ref{decomp1}) can
then be rewritten as
\begin{equation} p_s(k) {\mathcal D}_{k,s}(\mu,\nu)=d_s(k)
{\mathbb D}_k(\mu,\nu) + {\mathcal K} \label{decomp2}
\end{equation}
where $p_s(k)$ and $d_s(k)$ are polynomials in $k$ and $\mathcal K$
is a nonholomorphic differential operator with the same
transformation properties as ${\mathbb D}_k(\mu,\nu)$ and with the
additional property that ${\mathcal K}(F)$ is orthogonal to all
holomorphic cusp forms in the variables $z_1$ or $z_4$ (for any
$C^{\infty} $ automorphic form on ${\mathbb H}_{2n}$ with suitable
growth properties). Note that (\ref{decomp2}) holds now for {\it all}
weights $k$, if we request the finitely many exceptions from
(\ref{decomp1}) to be among the zeroes of $p_s(k)$. We also observe
that ${\mathcal D}_{k,s}(\mu,\nu)$ is a homogeneous polynomial of
degree $n\mu+\nu$ in the variables $(\partial_{ij})_{\mid z_2=0}$
and the entries of $y_1^{-1}$ and $y_4^{-1}$ and ${\mathcal K}$
consists only of monomials whose joint degree in $\partial_1$ and
$y_1^{-1}$ as well as in $\partial_4$ and $y_4^{-1}$ are both
positive, in particular, ${\mathcal K}$
cannot contribute monomials that only involve entries of $\partial_2$.\\
Therefore (as in \cite[(1.31)]{BCG}) we may  compare the coefficients of
$\det(\partial_2)^{\mu} \left(\sum_{i,j}
\frac{\partial}{\partial_{i,n+j}}X_iY_j   \right)^{\nu}$ on both
sides: We get

$$p_s(k) C_{k+s}(\mu,\nu)=d_s(k) C_k(\mu,\nu).$$
\vspace{0.3cm}
From this we obtain a version of the pullback formula (\ref{boe1})

$$ \int_{\Gamma^{(n)}_0(N)\backslash {\mathbb H}_n}
\left\{\rho(\sqrt{y})F(z),\rho(\sqrt{y}) {\mathbb
D}_{k}(\mu,\nu)E_k^{(2n)}\left(\left(\begin{array}{cc} z & 0\\ 0 &
-\bar{w}
\end{array}\right),\bar{s}\right)\right\} d\omega_n$$
\begin{equation}=
\frac{p_s(k)}{d_s(k)}\cdot\gamma_n(k,\mu,\nu,s) \sum_M F\mid T_N(M)
\det(M)^{-k-2s}. \label{boe2} \end{equation}
\vspace{0.3cm}
We need the result above for the pullback formula applied for a
 degree 4, weight 2 Eisenstein series at $s_1=\frac{1}{2}$:
we consider the holomorphic modular form
$${\mathcal E}_2^{(4)}:= Res_{s=s_1} E_{2}^{(4)}(Z,s) $$
Then we get for a cusp form $F\in S_{\rho}(\Gamma^{(2)}_0(N))$, with
$\rho=\det^{2+\mu}\otimes Sym^{\nu}$,
\begin{eqnarray}
\langle F, {\mathbb D}_2(\mu,\nu){\mathcal E}_2^{(4)}
(*,-\bar{w})\rangle &=&Res_{s=s_1}
 \langle F,{\mathbb D}_2(\mu,\nu) E_2^{(4)}(*,-\bar{w})\rangle \nonumber\\
&=& Res_{s=s_1} \frac{p_s(2)}{d_s(2)}
\langle F, {\mathbb D}_{2+s}(\mu,\nu) G_2^{(4)} \det(y)^s
\det(v)^s\rangle \nonumber \\
&=&
c\cdot Res_{s=s_1}\left(\sum_M  F(w)\mid T_N(M)
\det(M)^{-2-2s}\right). \label{boe3}
\end{eqnarray}
The relevant constant is then
\begin{equation}c=
\frac{C_2(\mu,\nu)}{C_{2+\frac{1}{2}}(\mu,\nu)}
\gamma_2\left(2,\mu,\nu,\frac{1}{2}\right).
\label{boe4}\end{equation}

\subsection{ Standard-L-functions at $s=1$ and $s=2$,
in particular for Yoshida lifts of degree 2}

\subsubsection{An Euler product}

If $F\in S_{\rho}(\Gamma_0^{(n)}(N))$ is an eigenform of all the
Hecke operators $T_N(M)$ with eigenvalues $\lambda_N(M)$, then the
Dirichlet series of these eigenvalues can be written in terms of the
(good part of) the standard $L$-function $D^{(N)}_F(s)$:

\begin{eqnarray*}
\lefteqn{\sum \lambda_N(M)\det(M)^{-s}=}\\
&& \left(\sum_{\det(M)\mid N^{\infty}} \det(M)^{-s}\right)\times
\frac{1}{\zeta^{(N)}(s)\prod_{i=1}^n
\zeta^{(N)}(2s-2i)}D_F^{(N)}(s-n).
\end{eqnarray*}

\vspace{0.4cm}
The integral representations
studied above allow
us to {investigate
(for degree 2) the}
behavior of such a standard $L$-function at
$s=1$ and $s=2$; we remark that $s=1$ is not a critical value for
the standard L-function! Note that in the formula above, we get
$D_F(1)$ for degree $n=2$ for $s=3$. In the formula (\ref{boe3}) this corresponds
to $s=s_1=\frac{1}{2}$ due to several shifts ($2s_1+2-2=1$ for this $s_1$).
\\
If $F$ is actually a Yoshida lift of level $N$ associated to two
elliptic cuspidal newforms $f\in S_{k'}(\Gamma_0(N)), g\in
S_k(\Gamma_0(N))$, with $k'\geq k$, then $F\in
S_{\rho}(\Gamma^{(2)}_0(N))$ with
$\rho=\det^{2+\frac{k'-k}{2}}\otimes \Sym^{k-2}$ is indeed an
eigenform of all the Hecke operators $T_N(M)$:
\begin{equation}\sum_M F\mid T_N(M)\det(M)^{-s}=\frac{\lambda}{N^{ns}}
\zeta^{(N)}(s-2)L^{(N)}\left(f\otimes g,s+\frac{k'+k}{2}-3\right)
\Lambda_N(s-2)    \cdot  F \label{boe5}
\end{equation}
where $\lambda=\pm N^{n(n-1)/2}=\pm N$ (with the sign depending only
on $N$),
$$\Lambda_N(s)=\prod_{p\mid N}\prod_{j=1}^2 (1-p^{-s-2+j})^{-1}$$
and
$$L^{(N)}(f_1\otimes f_2,s):= \prod_{p\nmid N} (1-\alpha_p\beta_p p^{-s})
(1-\alpha_p\beta_p'p^{-s})(1-\alpha_p'\beta_p
p^{-s})(1-\alpha_p'\beta_p'p^{-s}).$$
Moreover $F\mid_{\rho} \left(\begin{array}{cc} 0_2 & -1_2\\
N\cdot 1_2 & 0_2 \end{array}\right)$ is also an eigenfunction of all
the $T_N(M)$ with the same eigenvalues as $F$; for details on the
facts mentioned above we refer to \cite{BS1,BS3}.

\subsubsection{ A version of the pullback formula
for the Eisenstein series attached to the cusp zero}
We can consider the same doubling method using the Eisenstein series
\begin{eqnarray*}\mathfrak{F}_k^{(2n)}(Z,s)&:=& \sum_{C,D} \det(CZ+D)^{-k-s}\det(C\bar{Z}+D)^{-s},\\
{\mathbb F}_k^{(2n)}(Z,s)&:=&\mathfrak{F}_k^{(2n)}(Z,s)\times
\det(Y)^s,\end{eqnarray*} where $(C,D) $ runs over non-associated
coprime symmetric pairs with the additional condition ``$\det(C)$
coprime to N'' (this is the Eisenstein series ``attached to the cusp
zero''). The reason for using both versions is that in our previous
papers \cite{BS1,BS3} we mainly worked with $E_k^{(2n)}$, whereas the Fourier
expansion is more easily accessible for the Eisenstein series
${\mathbb F}_k^{(2n)}$.
\\
The two doubling integrals are linked to each other by the elementary relation
$$E_k^{(2n)}(Z,s)\mid_k \left(\begin{array}{cc} 0_{2n} & -1_{2n}\\
N\cdot 1_{2n} & 0_{2n}\end{array}\right)= N^{-kn-2ns} {\mathbb
F}_k^{(2n)}(Z,s).$$ Due to this relation, substituting
$\mathfrak{F}$ for $E$ in the doubling method just means (for
Yoshida-lifts) a modification by a power of $N$ (the factor $N^{-ns}$
in (\ref{boe5}) goes away).
For the case of arbitrary cusp forms we refer to \cite{BCG,BKS}.\\
We write down the relevant cases
explicitly for the Yoshida lift $F$ from above:\\
The residue of the standard $L$-function at $s=1$ corresponds to
a near center value for $L(f_1\otimes f_2,s)$:\\
The equation (\ref{boe3}) then becomes (with ${\mathcal F}_2^{(4)}:=
Res_{s=\frac{1}{2}} {\mathbb F}_2^{(4)}$)

\begin{equation}\label{ready_to_use_pullback_for_F_weight_2}
{\left\langle F,{\mathbb D}_2\left(\frac{k'-k}{2}, k-2\right)
{\mathcal F}_2^{(4)}(*,-\bar{w})\right\rangle}\end{equation} $$= c
\lambda \prod_{p\mid N} (1-p^{-1}) \Lambda_N(1)
\frac{1}{\zeta^{(N)}(3)\zeta^{(N)}(4)\zeta^{(N)}(2)}
L^{(N)}\left(f\otimes g,\frac{k'+k}{2}\right)\cdot F(w)$$
with
$$c= \frac{ C_2(\frac{k'-k}{2},k-2)}
{C_{2+\frac{1}{2}}(\frac{k'-k}{2},k-2)} \cdot \gamma_2\left(
\frac{k'-k}{2},k-2,\frac{1}{2}\right) .$$

To treat the critical value of the standard $L$-function at $s=2$,
we can directly use the formula (\ref{boe1}), taking tacitly into
account that ${\mathcal F}_4^{(4)}(Z):=
\mathbb{F}_4^{(4)}(Z,s)\mid_{s=0}$ defines a holomorphic modular
form (see \cite[Prop.10.1]{Shi1}) by Hecke summation.
\\
This yields
\begin{eqnarray}\label{pullback4}
\lefteqn{\left\langle F,{\mathbb D_4}\left(\frac{k'-k}{2}-2,
k-2\right) {\mathcal F}^{(4)}_4(*,-\overline{w})\right\rangle }
\\
&=&\gamma_2\left(4,\frac{k'-k}{2}-2,k-2,0\right)\times \nonumber\\
&& (\pm N) \,\,\Lambda_N(2)
\frac{\zeta^{(N)}(2)}{\zeta^{(N)}(4)\zeta^{(N)}(6)\zeta^{(N)}(4)}
L^{(N)}\left(f\otimes g, \frac{k'+k}{2}+1\right)\cdot F(w).\nonumber
\end{eqnarray}
In the case of a general cusp form $F\in
S_{\rho}(\Gamma^{(2)}_0(N))$, which we assume to be an eigenfunction
of the Hecke operators ``away from $N$'', we can write
\begin{eqnarray*}
\lefteqn{\left\langle F,{\mathbb
D_4}\left(\frac{k'-k}{2}-2\right){\mathcal
F}^{(4)}_4(*,-\overline{w})\right\rangle }
\\
&=&\gamma_2\left(4,\frac{k'-k}{2}-2,k-2,0\right)\times
\frac{D_f^{(N)}(2)}{\zeta^{(N)}(4)\zeta^{(N)}(6)\zeta^{(N)}(4)}
{\mathcal T}(F)(w)\end{eqnarray*} where ${\mathcal T}$ is an
(infinite) sum  of Hecke operators at the bad places.

\section{ Integrality properties}
The known results about integrality of Fourier coefficients of
Eisenstein series are not sufficient for our purposes because they
deal only with level one and large weights. We do not aim at the
most general case, but just describe how to adapt the reasoning in
\cite[section 5]{B4} to the cases necessary for our purposes.

\subsection{ The Eisenstein series}

We collect some facts about the Fourier coefficients of Eisenstein
series

$$F_k^m(Z):= {\mathbb F}_k^m(Z,s)_{\mid s=0}$$
for even  $m=2n$ with $k\geq \frac{m+4}{2}$
\\
This function is known to define a holomorphic modular form with
Fourier expansion

$$F_k^m(Z)=\sum_{T\geq 0} a^m_k(T,N)exp(2\pi i tr(TZ)).$$
We first treat $T$ of maximal rank. We denote by $d(T):=
(-1)^n\det(2T)$ the discriminant of $T$ and by $\chi_T$ the
corresponding quadratic character, defined by
$\chi_T(.):=\left({d(T)\over .}\right)$.
\\
Then $a^m_k(T,N)=0$  unless $T>0$, see e.g. \cite[prop.5.2]{BCG}.
\\
If $T>0$ then the Fourier coefficient is of type
$$a^m_k(T)=A^m_k \det(T)^{k-\frac{m+1}{2}}\prod_{p\nmid N} \alpha_p(T,k)$$
where $\alpha_p(T,k)$ denotes the usual local singular series
and

$$A^m_k=(-1)^{\frac{mk}{2}} \frac{2^m}{\Gamma_m(k)}\pi^{mk}.$$

We can express the nonarchimedean part by a normalizing factor and
polynomials in $p^{-k}$:

\begin{eqnarray*}\prod_{p\nmid N}\alpha_p(T,k)&=&
\frac{1}{\zeta^{(N)}(k)\prod^n_{j=1}\zeta^{(N)}(2k-2j)}\times\\
&&\sum_{G} det(G)^{-2k+m-1} L^{(N)}(k-n,\chi_{T[G^{-1}]})
\prod_{p\nmid N}\beta_p(T[G^{-1}],k).\end{eqnarray*}
Here $G$ runs over
$$GL(n,{\mathbb Z})\backslash
\{M\in {\mathbb Z}^{(n,n)}\,|\, \det(M)\,\,\mbox{coprime
to}\,\,N\}$$ and the $\beta_p(T)$ denote the ``normalized primitive
local densities''. In general  they are polynomials in $p^{-k}$ with
integer coefficients and they are equal to one
for all $p$ coprime to $d(T)$, see e.g. \cite[section 2]{B4}.\\
Let $f_T$ be the conductor of the quadratic character $\chi_T$ and
$\eta_T$ the corresponding primitive character. Then

\begin{eqnarray*}
L^{(N)}(k-n,\chi_T)&=&\prod_{p\mid N}(1-\chi_T(p)p^{-k+n})L(k-n,\chi_T)\\
&=& \prod_{p\mid N} (1-\chi_T(p)p^{-k+n})\prod_{p\mid d(T)}
(1-\eta(p)p^{n-k}) L(k-n,\eta_T)
\end{eqnarray*}
We quote from \cite{B4} that
$$\left(\frac{d(T)}{f_T}\right)^{k-\frac{m}{2}}
\prod_{p\nmid N} (1-\eta_T(p)p^{n-k})\beta_p(T,k)\in {\mathbb Z}.$$
We may therefore just ignore this factor. Then as in \cite{B4} we
use the functional equation of the Riemann zeta function and the
Dirichlet L-functions attached to quadratic characters.
\\
We get (for $4\mid k$) that
$$a_k^m(T,N)\in \prod_{p\mid N}
\left((1-p^{-k})\prod_{j=1}^n (1-p^{-2k+2j})\right) 2^n
\frac{k}{B_k} \frac{1}{{\mathcal N}^*_{2k-m}}\prod_{j=1}^n
\frac{k-j}{B_{2k-2j}} \cdot \frac{1}{N^{k-n}}\cdot {\mathbb Z}$$
Here the factor $N^{k-n}$ takes care of the possible denominator
arising from \\
$\prod_{p\mid N}(1-\chi_T(p)p^{-k+n})$ and

$${\mathcal N}_{2k-m}^*:= \prod_{p\mid {\mathcal N}_{2k-m}} p^{1+\nu_p(k-n)},$$
where ${\mathcal N}_{2k-m}$ is the denominator of the Bernoulli
number $B_{2k-m}$.
\\
If $k\equiv 2\bmod 4$ there is a similar formula, see \cite{B4}.
\\[0.3cm]
We have to assure that nonzero Fourier coefficients of lower rank do
not occur. This is a classical fact in the range of absolute convergence
(i.e. $k>m+1$), see e.g. the calculations in \cite[section 18]{Ma}.
It is also true for small weight $k\geq \frac{m+4}{2}$ and level one,
as shown by Haruki \cite[Theorem 4.14]{Har}; his result relies
on calculations by Shimura \cite{Shi1} and Mizumoto \cite{Miz}.
The basic ingredient for Haruki is an expression \cite[(1.1)]{Har}
for Fourier coefficients $T$ of rank $r<m$ as finite sums of products
of $\Gamma$-factors, singular series,
confluent hypergeometric functions and Eisenstein series  for
$Gl(n)$
evaluated at $s=0$. Haruki's procedure remains valid
for level $N>1$ as long as it is based on individual vanishing of the products
mentioned above (the modification for level $N>1$ means to omit the local
singular series for primes dividing $N$
, i.e. for all $p\mid N$ one has to multiply the level one expression by
a polynomial in $p^{-s-2k}$, evaluated at $s=0$).
Indeed, as shown in the proof of Theorem 4.14 \cite{Har},
such individual vanishing occurs for all $T$ of rank $r<m$ and all
$k\geq \frac{m+4}{2}$ except possibly for the case $k=\frac{m+4}{2}$
and $r=m-4>0$; in this exceptional case the vanishing for level one depends
on cancellations for some $T$.\\
{\it In summary, the Fourier coefficients $a^m_k(T,N)$
all vanish for $rank(T)<m$
and $k>\frac{m+4}{2}$ and also for $m=k=4$.}

\begin{remar} The Fourier coefficients of $F_4^4(N)$ are in
$$   \prod_{p|N}\left((1-p^{-4})^2(1-p^{-6})\right)
\frac{9}{2N^2}\cdot  {\mathbb Z}\subseteq \frac{9}{N^{16}} \cdot
{\mathbb Z}\left[\frac{1}{2}\right].$$
\end{remar}

\subsection{ The differential operators}

By definition, the coefficients of the differential operator
${\mathcal D}_k^{\mu}$ are in ${\mathbb Z}[1/2]$; here we view
${\mathcal D}_k^{\mu}$ as a
polynomial in the variables $z_2$ and $\partial_{ij}$.\\
Concerning the integrality properties of $L^{\nu}_k$, we just remark
that because of
$$(2-k-\nu)^{[j]}=(-1)^{j}(k+\nu-j-1)^{[j]}=
(-1)^j\frac{(k+\nu-2)!}{(k+\nu-j-1)!}$$ it is sufficient to look at
$$\frac{(k+\nu-j-1)!}{k^{[\nu]} j!(\nu-2j)! (k+\nu-2)!}\qquad
(0\leq j\leq \left[\frac{\nu}{2}\right]).$$
Taking into account that
$\frac{\nu!}{j!(\nu-2j)!}\in {\mathbb Z}$ and
$$\frac{(k+\nu-j-1)!}{(k+\nu-2)!}
\in \frac{1}{(k+\nu-[\frac{\nu}{2}])... (k+\nu-2)}\cdot {\mathbb
Z}$$
we see that the coefficients of $L^{\nu}_k$ are in
$$\frac{1}{k^{[\nu]}\nu! (k+\nu-2)... (k+\nu-[\frac{\nu}{2}])}\cdot
{\mathbb Z}.$$
Putting things together, we see that
 ${\mathbb D}_k(\mu,\nu)$ has coefficients in
$$\frac{1}{(k+\mu)^{[\nu]}\nu!
(k+\mu+\nu-2)... (k+\mu+\nu-[\frac{\nu}{2}])} \cdot {\mathbb
Z}\left[1/2\right].$$

\begin{remar} The Fourier coefficients of
${\mathbb D}_4(\mu,\nu) {\bf F}^4_4$ are in
$$\frac{1}{(4+\mu)^{[\nu]}\nu!
(4+\mu+\nu-2)... (4+\mu+\nu-[\frac{\nu}{2}])}\times \frac{9}{N^{16}}
{\mathbb Z}\left[1/2\right].$$
\end{remar} This remark does not claim, that the denominator given there is the
best possible one, there may be additional cancellations of
denominators coming from the restriction.

\section{The Petersson norm of the Yoshida lift}
Take $f=\sum a_nq^n, g=\sum b_nq^n$ as in the introduction, of
weights $k'$ and $k$ respectively and assume that for all primes $p$
dividing the common (square-free) level $N$ of $f,g$ both functions
have the same Atkin-Lehner eigenvalue $\epsilon_p$. Let
$k'=2\nu_1+2, k=2\nu_2+2$. Choose a factorization $N=N_1N_2$, where
$N_1$ is the product of an odd number of prime factors, and let
$D=D(N_1,N_2)$ be the definite quaternion algebra over $\QQ$,
ramified at $\infty$ and the primes dividing $N_1$. Let
$R=R(N_1,N_2)$ be an Eichler order of level $N=N_1N_2$ in
$D(N_1,N_2)$ with (left) ideal class number $h$.

We recall (and slightly modify) some  notation from  \S 1 of
\cite{BS3}:
For $\nu \in \N$ let $U_{\nu}^{(0)} $ be the space of homogeneous
harmonic polynomials of degree $\nu$ on $\R^3$ and view $P \in
U_{\nu}^{(0)}$ as a polynomial on $D_\infty^{(0)}= \{ x \in D_\infty
\vert \tr (x)=0\}$ by putting
$P(\sum_{i=1}^{3}x_ie_i)=P(x_1,x_2,x_3)$ for an orthonormal basis
$\{ e_i\}$ of $D_\infty^{(0)}$ with respect to the norm form $n$ on
$D$.
The representations $\tau_{\nu}$ of $D_\infty^\times/\R^\times$ of
highest weight $(\nu)$ on $U_{\nu}^{(0)}$ given by
$(\tau_{\nu}(y))(P)(x)=P(y^{-1}xy)$ for $\nu \in \N$ give all the
isomorphism classes of irreducible rational representations of
$D_\infty^\times/\R^\times.$

For an irreducible rational representation $(V_{\tau},\tau)$ (with
$\tau=\tau_\nu$ as above) of $D^\times_\infty/\R^\times$ we denote
by $\cA(D^\times_{\bA},R^\times_{\bA},\tau)$ the space of functions
$\phi: D^\times_{\bA} \to V_\tau$ satisfying $\phi(\gamma x
u)=\tau(u^{-1}_\infty)\phi(x)$ for $\gamma\in D^\times_{\Q}$ and
$u=u_\infty u_f\in R^\times_{\bA}$, where $R^\times_{\bA} =
D^\times_\infty\times \prod_p R^\times_p$ is the adelic group of
units of $R$.
Let $D^\times_{\bA}=\cup^r_{i=1}D^\times y_i R^\times_{\bA}$ be a
double coset decomposition with $y_{i,\infty} = 1$ and $n(y_i)=1$. A
function in $\cA(D^\times_{\bA},R^\times_{\bA},\tau)$ is then
determined by its values at the $y_i$. We  put
$I_{ij}=y_iRy_j^{-1},\ R_i=I_{ii}$ and let $e_i$ be the number of
units of the order $R_i$.
On the space $\cA(D^\times_{\bA},R^\times_{\bA},\tau)$ we have for
$p\nmid N$ Hecke operators $\tilde T(p)$ defined by $\tilde
T(p)\phi(x) =
\mathop\int\limits_{D^\times_p}\phi(xy^{-1})\chi_p(y)dy$ where
$\chi_p$ is the characteristic function of $\lbrace y\in R_p\vert
n(y)\in p\Z^\times_p\rbrace$. They commute with the involutions
$\tilde w_p$ and are given explicitly by $\tilde T(p)\phi(y_i) =
\mathop\sum\limits^r_{j=1} B^\nu_{ij}(p)\phi(y_j)$, where the Brandt
matrix entry $B^\nu_{ij}(p)$ is given as
$$B_{ij}(p)\ = B_{ij}^{(\nu)}(p)= {1\over{e_j}} \quad
{\mathop\sum\limits_ {{x\in y_j Ry^{-1}_i}\atop{n(x)=p}}} \quad
\tau(x) \ ,$$ hence is itself an endomorphism of the representation
space $U_\nu^{(0)}$ of $\tau.$

From \cite{E,H-S,Shz,J-L} we know then that the essential part
$\cA_{\text{ess}} (D^\times_{\bA},R^\times_{\bA},\tau)$ consisting
of functions $\phi$ that are orthogonal (under the natural inner
product) to all
$\psi\in\Ascr(D^\times_{\bA},(R'_{\bA})^\times,\tau)$ for orders
$R'$ strictly containing $R$ is invariant under the $\tilde T(p)$
for $p\nmid N$ and the $\tilde {w}_p$ for $p{\nmid} N$ and hence has
a basis of common eigenfunctions of all the $\tilde T(p)$ for
$p\nmid N$. Moreover in $\cA_{\text{ess}}(D^\times_{\bA}$,
$R^\times_{\bA},\tau)$ strong multiplicity one holds, i.e., each
system of  eigenvalues of the $\tilde T(p)$ for $p\nmid N$ occurs at
most once, and the eigenfunctions are in one to one correspondence
with the newforms in the space $S^{2+2\nu}(N)$ of elliptic cusp
forms of weight $2+2\nu$ for the group $\Gamma_0(N)$ that are
eigenfunctions of all Hecke operators (if $\tau$ is the trivial
representation and $R$ is a maximal order one has to restrict here
to functions orthogonal to the constant function $1$ on the
quaternion side in order to obtain cusp forms on the modular forms
side).

Let $\phi_1=\phi_1^{(N_1,N_2)}:D^\times_{\AAA}\rightarrow
U_{\nu_1}^{(0)}$ and
$\phi_2=\phi_2^{(N_1,N_2)}:D^\times_{\AAA}\rightarrow
U_{\nu_2}^{(0)}$ correspond to $f$ and $g$ respectively with respect
to the choice of $N_1,N_2$ and hence of $D=D(N_1,N_2)$. Let
$F=F_{f,g}=F_{\phi_1,\phi_2}$ (which of course also depends on the
choice of $N_1,N_2$) be the Yoshida lift; it takes values in the
space $W_\rho$ of the symmetric tensor representation
$\rho=\det^{\kappa}\otimes\Sym^j(\CC^2)$, $j=k-2,
\kappa=2+\frac{k'-k}{2}$ and is a Siegel cusp form $F\in
S_{\rho}(\Gamma_0^{(2)}(N))$.
To describe it explicitly we notice that the group of proper
similitudes of the quadratic form $q(x)=n(x)$ on $D$ (with
associated symmetric bilinear form $B(x,y)=\text{tr}(x \bar y)$,
where ${\rm tr}$ denotes the reduced trace on $D$) is isomorphic to
$(D^\times\times D^\times)/Z(D^\times)$ (as algebraic group) via
$(y,y')\mapsto \sigma_{y,y'}$  with
$\sigma_{y,y'}(x)  = y x (y')^{-1} ,$  the special orthogonal
group is then the image of
$\{(y,y')\in D^\times\times D^\times\mid n(y)= n(y')\}.$

We denote by $H$ the orthogonal group of $(D,n)$, by $H^+$ the
special orthogonal group and by $K$ (resp. $ K^+)$ the group of
isometries (resp.\
isometries of determinant $1$) of the lattice  $R$ in $D$.
It is well known that the
$H^+(\R)$-space $U_{\nu_1}^{(0)}\otimes U_{\nu_2}^{(0)}$ is
isomorphic to the $H^+(\R)$-space $U_{\nu_1,\nu_2}$ of $\C[X_1,
X_2]$-valued harmonic forms on $D_{\infty}^2$ transforming according
to the representation of $GL_2(\R)$ of highest weight
$(\nu_1+\nu_2,\nu_1-\nu_2)$; an intertwining map $\Psi$ has been
given in \cite[Section 3]{BS5}. It is also well known \cite{K-V}
that the representation $\lambda_{\nu_1,\nu_2}$ of $H^+(\R)$ on
$\Unn$ is irreducible of highest weight $(\nu_1+\nu_2,\nu_1-\nu_2)$.
If $\nu_1>\nu_2$ it can be extended in a unique way to an
irreducible representation of $H(\R)$ on the space
$U_{\nu_1,\nu_2,s}:=(U^{(0)}_{\nu_1}\otimes U^{(0)}_{\nu_2}) \oplus
(U^{(0)}_{\nu_2}\otimes U^{(0)}_{\nu_1})=:U_\lambda$ which we denote
by $(\tau_1\otimes \tau_2)=:\lambda$ for simplicity, on this space
$\sigma_{y,y'}\in H^+(\R)$ acts via $\tau_1(y)\otimes \tau_2(y')$ on
the summand $U^{(0)}_{\nu_1}\otimes U^{(0)}_{\nu_2}$ and via
$\tau_2(y)\otimes \tau_1(y')$ on the summand $U^{(0)}_{\nu_2}\otimes
U^{(0)}_{\nu_1}$.
For $\nu_1=\nu_2$ there are two possible extensions to
representations $(\tau_1\otimes \tau_2)_{\pm}$ on $\Unn$; we denote
this space with the representation $(\tau_1\otimes
\tau_2)_{+}=:\lambda$ on it by $U_\lambda$ again (and don't consider
the minus variant in the sequel).

We recall then from \cite{K-V, Weiss_vektor, BS3} that  the space
${\mathcal H}_q(\rho)$ consisting of all $q$-pluriharmonic
polynomials $P:M_{4,2}(\CC) \rightarrow W_{\rho}$ such that $ P(xg)
= (\rho(g^t))P(x)$ for all $g \in GL_2(\CC)$ is isomorphic to
$(U_\lambda, \lambda)$ as a representation space of $H(\RR)$.
The space ${\mathcal H}_q(\rho)$ carries an essentially unique
$H(\RR)$-invariant scalar product $\langle
\quad,\quad\rangle_{{\mathcal H}_q(\rho)}$, and in the usual way we
can find a reproducing $H(\RR)$ invariant kernel $P_{\rm Geg} \in
{\mathcal
  H}_q(\rho)\otimes {\mathcal H}_q(\rho)$ (generalized Gegenbauer
polynomial) , i.\ e., $P_{\rm Geg}$ is a polynomial on
$D_\infty^2\oplus D_\infty^2$ taking values in $W_\rho\otimes
W_\rho$ which as function of each of the variables
\begin{itemize}
\item[i)] is a
$q$-pluriharmonic polynomial in ${\mathcal H}_q(\rho)$,
\item[ii)] is
symmetric in both variables
\item[iii)] satisfies   $P_{\rm Geg}(h{\bf
x},h\tilde{\bf x})=P_{\rm Geg}({\bf
  x},\tilde{\bf x})$ for $h \in H(\RR)$
\item[iv)] satisfies
$\langle P_{\rm Geg}({\bf x},\cdot),P(\cdot)\rangle_{{\mathcal
H}_q(\rho)}=P({\bf
  x})$ for all $P\in
{\mathcal H}_q(\rho)$.
\end{itemize}
In fact, since such a polynomial is
characterized by the first three properties up to scalar multiples
we can construct it (in a more general situation) with the help of
the differential operator ${\mathbb
  D}_{\alpha}(\mu,\nu)$ and the
polynomial $Q_\alpha^{\mu,\nu}$ from \ref{definition_diffop_prop}:

 For $k\in {\mathbb N}$ and nonnegative integers $\mu,\nu$ we
define a polynomial map
 $$ \widetilde{P_{\rm Geg}}^{(k,\mu,\nu)} :
{\mathbb C}^{2k,n} \times {\mathbb C}^{2k,n}\longrightarrow
V_{\nu}\otimes V_{\nu}$$ by
$$\widetilde{P_{\rm Geg}}^{(k,\mu,\nu)}({\bf Y_1},{\bf Y_2}):=
Q^{(\mu,\nu)}_{k}\left(\left(\begin{array}{cc}
{\bf Y}_1^t{\bf Y} &  {\bf Y}_1^t{\bf Y}_2\\
{\bf Y}_2^t{\bf Y}_1 & {\bf Y}_2^t{\bf Y}_2\end{array}\right)
\right)$$ Then $\widetilde{P_{\rm Geg}}^{(k,\mu,\nu)}$ is symmetric
and pluriharmonic in ${\bf Y_1 }$ and ${\bf Y_2}$, see \cite{I} ;
moreover,  for $A,B\in GL(n,{\mathbb C})$ we have
$$\widetilde{P_{\rm Geg}}^{(k,\mu,\nu)}({\bf Y_1}\cdot A,{\bf Y_2}
\cdot
B)=\det(A)^{\mu}\det(B)^{\mu}\sigma_{\nu}(A)\otimes\sigma_{\nu}(B)
(\widetilde{P_{\rm Geg}}^{(k,\mu,\nu)}({\bf Y}_1,{\bf Y}_2).$$ For
$g\in O(2k,{\mathbb C})$ we get
$$\widetilde{P_{\rm Geg}}^{(k,\mu,\nu)}(g{\bf Y}_1,{\bf Y }_2)=
\widetilde{P_{\rm Geg}}^{(k,\mu,\nu)}({\bf Y}_1,g^{-1}{\bf Y}_2).$$

If we consider a $2k$-dimensional positive definite real quadratic
space with positive definite quadratic form $q$ and associated
bilinear form $B$ (so that $B(x,x)=2q(x)$) we write
$q(x_1,\ldots,x_{2n})=(B(x_i,x_j)/2)_{i,j}$ for (half) the $2n\times
2n$ Gram matrix associated to the $2n$-tuple of vectors
$(x_1,\ldots,x_{2n})$ and put in a similar way as above for
$(\by,\by')\in V^{2n}$
$$P_{\rm Geg}^{(k,\mu,\nu)}(\by,\by')=Q_k^{\mu,\nu}(q(\by,\by')),$$
this gives a nonzero polynomial with values in $V_\nu\otimes V_\nu$
which is symmetric in the variables $\by,\by'$, is $q$-pluriharmonic
in each of the variables with the proper transformation under the
right action of $GL_n$ and is invariant under the diagonal action of
the orthogonal group of $q$; it is hence a scalar multiple of the
$V_\nu\otimes V_\nu$-valued Gegenbauer polynomial on this space.

If we apply the differential operator ${\mathbb D}_k(\mu,\nu)$ to a
degree $2n$ theta series $\Theta^{2n}_S(Z):=\sum_{R\in {\mathbb
Z}^{2k,2n}} \exp 2\pi i \tr(R^tSRZ)$ written in matrix notation we
get
\begin{equation*}\label{theta_differentiated}
\begin{split}
({\mathbb D}_k&(\mu,\nu) \Theta^{2n}_S)(z_1,z_4)\\
&=\sum_{R_1,R_2\in {\mathbb Z}^{(2k,n)}} (2\pi i)^{n\mu}
Q_k^{(\mu,\nu)}\left(\begin{pmatrix}
S[R_1] & R_1^tSR_2\\
R_2^tSR_1 & S[R_2]\end{pmatrix}\right) \exp 2\pi i
\tr(S[R_1]z_1+S[R_2]z_4);
\end{split}\end{equation*}
writing the theta series in lattice notation as the degree $2n$
theta series
\begin{equation*}
\theta_{\Lambda}^{(2n)}(Z)=\sum_{\bx \in \Lambda^{2n}}\exp(2\pi i
\tr (q(\bx)Z))
\end{equation*}
of a lattice $\Lambda$ on $V$ we obtain in the same way
\begin{equation}\label{theta_gegenbauer_diffops}
\begin{split}
{\mathbb D}_k(\mu,\nu)\theta_{\Lambda}^{(2n)}&(z_1,z_4)\\ &= (2\pi
i)^{n\mu}\sum_{(\by,\by')\in
  \Lambda^{2n}}P_{\rm Geg}(\by,\by')\exp(2\pi i \tr
(q(\by)z_1+q(\by')z_4))\\
&= (2\pi i)^{n\mu}\sum_{(\by)\in
  \Lambda^{n}}\theta_{\Lambda}^{(n,\nu)}(z_4)(\by)\exp(2\pi i \tr (q(\by)z_1)),
\end{split}
\end{equation}

where we have written
\begin{equation}\label{theta_gegenbauer_defi}
\theta_{\Lambda}^{(n,\nu)}(z_4)(\by):=\sum_{(\by')\in
  \Lambda^{n}}P_{\rm Geg}(\by,\by')\exp(2\pi i \tr
(q(\by')z_4)).
\end{equation}

Going through the construction above in our quaternionic situation
with $V_\nu=W_\rho$ we see that we can normalize the scalar product
on  ${\mathcal H}_q(\rho)$ in such a way that the polynomial $P_{\rm
Geg}$ obtained in the way just described is indeed the reproducing
kernel for this space. We choose this normalization in what follows
and write
\begin{equation*}
\theta_{ij,\rho}(Z)(\tilde{{\bf x}}):= \mathop\sum\limits_{{{{\bf
x}}}\in (y_i Ry^{-1}_j)^2} P_{\rm Geg}({\bf x},\tilde{\bf x})
\exp(2\pi i\tr(q({\tilde{\bf x}})Z))\in W_\rho\otimes W_\rho
\end{equation*}
(so that $\theta_{ij,\rho}(Z)$ is (for each $Z$ in the Siegel upper
half space ${\mathfrak H}_2$) an element of ${\mathcal H}_q(\rho)
\otimes W_\rho$.) For an arbitrary lattice $\Lambda$ on $D$ the
theta series $\theta_{\Lambda,\rho}$ is defined analogously as given
in equation (\ref{theta_gegenbauer_defi}).

\par\noindent
We denote by ${\mathcal P}$ the (essentially unique) isomorphism
from $U_{\lambda}$ to ${\mathcal H}_q(\rho)$. With the help of the
map $\Psi$ from \cite{BS5} mentioned above we can fix a
normalization and write ${\mathcal P}(R_1\otimes R_2)$ for $R_j \in
U_{\nu_j}^{(0)}$ as
\begin{equation}\label{psi_definition}
\begin{split}{\mathcal P}(R_1&\otimes
R_2)(d_1,d_2)(X_1,X_2)\\&=
({\mathcal
  D}(n(d_1X_1+d_2X_2)^{\nu_2}\tau_2(d_1X_1+d_2X_2)R_2)R_1)(\Im(d_1\overline{d_2})),\end{split}
\end{equation}
where we associate as usual to a polynomial $R\in \C[t_1,t_2,t_3])$
the differential operator ${\mathcal
D}(R)=R(\frac{\partial}{\partial
  t_1},\frac{\partial}{\partial t_2},\frac{\partial}{\partial t_3})$,
set $\Im(d)=d-\bar{d}$ and write all vectors as coordinate vectors
with respect to an orthonormal basis.

\begin{defi} With notation as above we define the Yoshida lift of
  $(\phi_1,\phi_2)$, or also of $(f,g)$ with respect to $(N_1,N_2)$, to
be given by
\begin{equation*}
F(Z):=Y^{(2)}(\phi_1,\phi_2)(Z):=\sum^r_{i,j{=}1} {1\over{e_i e_j}}
\langle {\mathcal P}( \phi_1(y_i)\otimes \phi_2(y_j)),
\theta_{ij,\rho}(Z)\rangle_{{\mathcal
    H}_q(\rho)} \in W_\rho.
\end{equation*}
\end{defi}
\begin{lem}\label{symmetry_of_ylift}
\begin{enumerate}
\item One has $\theta_{ij,\rho}(Z)({\bf
  x})=\theta_{ji,\rho}(Z)(\bar{\bf x})$ (where $\bar{\bf
  x}=(\bar{x_1},\bar{x_2})$ denotes the quaternionic conjugate of the
pair ${\bf x}=(x_1,x_2)$).
\item
\begin{eqnarray*}
2F(Z)&=&2Y^{(2)}(\phi_1,\phi_2,Z)\\
&=&\sum^r_{i,j{=}1} {1\over{e_i e_j}} \mathop\sum\limits_{{{{\bf
x}}}\in (y_i Ry^{-1}_j)^2} {\mathcal P}( \phi_1(y_i)\otimes
\phi_2(y_j)+\phi_2(y_i)\otimes
\phi_1(y_j)))(x_1,x_2)\times\nonumber\\
&& \quad\quad\quad\quad\quad\quad\quad\quad\quad\quad\quad
\times\exp(2\pi i\tr(q({{\bf x}})Z)).\nonumber
\end{eqnarray*}
\item
Denote by  $\langle F,\theta_{ij,\rho}\rangle_{\textup{Pet}}$ the
Petersson product of the vector valued Siegel modular forms $F$ and
$\theta_{ij,\rho}$. Then the function $\xi: (y_i,y_j) \mapsto
\langle F,\theta_{ij,\rho}\rangle_{\textup{Pet}}\in {\mathcal
H}_q(\rho)$ has the symmetry property $\xi(y_i,y_j)({\bf
x})=\xi(y_j,y_i)(\bar{{\bf x}})$. It induces  a unique function,
denoted by $\tilde{\xi}$, on $H(\bA)$ satisfying
$\tilde{\xi}(\sigma_{y_i,y_j})=\xi(y_i,y_j)$ and
\begin{equation*}
\tilde{\xi}(\gamma \sigma k)=\lambda
(k_\infty^{-1})\tilde{\xi}(\sigma)  \text{ for } \sigma \in H(\bA),
\gamma \in H(\Q), k=(k_v)_v \in H(R_{\bA}),
\end{equation*}
where we denote by $H(R_{\bA})$ the group of adelic isometries of the
lattice $R$ on $D$.
\end{enumerate}
\end{lem}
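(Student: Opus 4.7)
The plan is to derive (1) from the invariance properties of the Gegenbauer polynomial and the normalisation $n(y_i)=1$, deduce (2) from the reproducing property of $P_{\text{Geg}}$ combined with the $H(\R)$-equivariance of the intertwiner $\mathcal{P}$, and reduce (3) to (1) together with the double-coset structure of $H^+(\bA)$.

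For part (1), I would first observe that quaternion conjugation $\iota\colon x\mapsto\bar{x}$ sits in $H(\R)\setminus H^+(\R)$ as an isometry of the norm form (it has determinant $-1$ on $D$), and that the normalisation $n(y_i)=n(y_j)=1$ combined with $\bar{R}=R$ yields $\overline{y_iRy_j^{-1}}=y_jRy_i^{-1}$. Hence componentwise conjugation is a $q$-preserving bijection $(y_iRy_j^{-1})^2\to(y_jRy_i^{-1})^2$. Substituting $\mathbf{x}\mapsto\bar{\mathbf{x}}$ in the defining sum for $\theta_{ji,\rho}(Z)(\bar{\tilde{\mathbf{x}}})$ and then invoking the full $H(\R)$-invariance of $P_{\text{Geg}}$ (property (iii)) to transfer $\iota$ simultaneously off both slots recovers $\theta_{ij,\rho}(Z)(\tilde{\mathbf{x}})$.

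For part (2), I would expand $F(Z)$ via the reproducing property (iv) to obtain
\[
F(Z) \;=\; \sum_{i,j}\frac{1}{e_ie_j}\sum_{\mathbf{x}\in(y_iRy_j^{-1})^2}\mathcal{P}(\phi_1(y_i)\otimes\phi_2(y_j))(\mathbf{x})\,\exp(2\pi i\,\tr(q(\mathbf{x})Z)),
\]
then rewrite $F(Z)$ a second time by swapping the outer indices $i\leftrightarrow j$ and applying the conjugation bijection from (1). The crucial input here is that $\lambda(\iota)$ acts on $U_\lambda$ by exchanging the two tensor summands $U_{\nu_1}^{(0)}\otimes U_{\nu_2}^{(0)}$ and $U_{\nu_2}^{(0)}\otimes U_{\nu_1}^{(0)}$ (uniquely determined when $\nu_1>\nu_2$, and this is the defining feature of the chosen $+$ extension when $\nu_1=\nu_2$); combined with the $H(\R)$-equivariance of $\mathcal{P}$ this gives $\mathcal{P}(\phi_1(y_j)\otimes\phi_2(y_i))(\bar{\mathbf{x}})=\mathcal{P}(\phi_2(y_i)\otimes\phi_1(y_j))(\mathbf{x})$. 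Adding the two expressions for $F(Z)$ produces the stated formula for $2F(Z)$.

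For part (3), the symmetry of $\xi$ is immediate from (1) by pairing both sides against $F$ in the Petersson inner product. To construct $\tilde\xi$, I would use the surjection $(y,y')\mapsto\sigma_{y,y'}$ of $(D^\times(\bA)\times D^\times(\bA))/Z(\bA)$ onto $H^+(\bA)$ -- under which the pairs $(y_i,y_j)$ provide a complete set of representatives for $H^+(\Q)\backslash H^+(\bA)/H^+(R_\bA)$ -- to set $\tilde\xi(\gamma\sigma_{y_i,y_j}k):=\lambda(k_\infty^{-1})\xi(y_i,y_j)$, with well-definedness following from the transformation laws $\phi_s(\gamma y u)=\tau_s(u_\infty^{-1})\phi_s(y)$ and the equivariance of $\mathcal{P}$. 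Extension to $H(\bA)=H^+(\bA)\sqcup\iota\cdot H^+(\bA)$ then uses the identity $\iota\sigma_{y,y'}\iota^{-1}=\sigma_{y',y}$ (valid since $n(y)=n(y')=1$), and the potential clash between the two resulting definitions on the non-identity component is exactly resolved by the symmetry just verified. The main obstacle is bookkeeping: one must track how the exterior swap of arguments induced globally by $\iota$ intertwines with the archimedean action of $\lambda(\iota_\infty^{-1})$, and this compatibility is precisely what part (1) encodes.
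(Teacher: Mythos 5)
Your proposal is correct and follows essentially the same route as the paper, whose one-sentence proof rests on exactly the two facts you isolate: that $\overline{y_iRy_j^{-1}}=y_jRy_i^{-1}$ (so componentwise conjugation is a norm-preserving bijection of the relevant lattices) and that quaternionic conjugation lies in $H(\RR)$ but not $H^+(\RR)$, so that $H(\RR)$-invariance of $P_{\rm Geg}$ and the $\iota$-equivariance of $\mathcal P$ (swapping the summands of $U_\lambda$) yield (1), (2) and (3). Your write-up is simply a more detailed unwinding of that argument.
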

\begin{proof}
This is easily seen to be a consequence of the fact that the lattice
$I_{ij}=y_i Ry^{-1}_j$ is the quaternionic conjugate of the lattice
$I_{ji}=y_jRy_i^{-1}$ and that quaternionic conjugation is an
element of the (global) orthogonal, but not of the special
orthogonal group of $(D,n)$.
\end{proof}

As in \cite{BS1} we need to show that $\xi$ is proportional to the
function $\xi_{\phi_1,\phi_2}:(y_i,y_j)\mapsto \phi_1(y_i)\otimes
\phi_2(y_j)+\phi_2(y_i)\otimes \phi_1(y_j)$ that appears in our
formula for the Yoshida lifting, and to determine the factor of
proportionality occurring.

\begin{lem}\label{liftback_relation}
With notations as in the previous lemma one has
\begin{equation*}
\langle F,\theta_{ij,\rho}\rangle_{\textup{Pet}}=c_5 {\mathcal P}(
\phi_1(y_i)\otimes \phi_2(y_j)+\phi_2(y_i)\otimes \phi_1(y_j))
\end{equation*}
and
\begin{equation*}
\langle F,F\rangle_{\textup{Pet}}=c_5\langle {\mathcal
P}(\phi_1\otimes \phi_2),{\mathcal P}(\phi_1\otimes \phi_2)\rangle,
\end{equation*}
with some constant $c_5 \ne 0$, where the latter inner product is
the natural inner product on ${\mathcal H}_q(\rho)$-valued functions
on $D_{\bA}^\times \times D_{\bA}^\times$ satisfying the usual
invariance properties under $R_{\bA}^\times$ and $D_{\bQ}^\times$,
which is defined by
\begin{equation*}
\langle {\mathcal P}(\phi_1\otimes \phi_2),{\mathcal
P}(\phi_1\otimes
\phi_2)\rangle=\sum_{i,j=1}^r\frac{1}{e_ie_j}\langle {\mathcal P}
(\phi_1(y_i)\otimes \phi_2(y_j)) , {\mathcal P} (\phi_1(y_i)\otimes
\phi_2(y_j))\rangle_{{\mathcal
    H}_q(\rho)}.
\end{equation*}
\end{lem}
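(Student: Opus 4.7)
The plan is to compute $\langle F,\theta_{ij,\rho}\rangle_{\textup{Pet}}$ by recognising the theta kernel as a see-saw device that transports the Siegel-side Petersson pairing to an inner product of automorphic forms on $H(\bA)$, and then to invoke strong multiplicity one in $\cA_{\textup{ess}}(D^\times_{\bA},R^\times_{\bA},\tau_{\nu_1}\otimes\tau_{\nu_2})$. This is the vector-valued generalisation of the argument used for the scalar case in \cite{BS1}.

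First I would substitute the expression (Lemma \ref{symmetry_of_ylift}(2)) for $2F$ into $\langle 2F,\theta_{ij,\rho}\rangle_{\textup{Pet}}$, so that the pairing unfolds into a sum over $(y_iRy_j^{-1})^2$ of pairings of pluriharmonic polynomials in ${\mathcal H}_q(\rho)$, weighted by generalised Gegenbauer kernels. The outcome is exactly the function $\xi(y_i,y_j)=\langle F,\theta_{ij,\rho}\rangle_{\textup{Pet}}$ of Lemma \ref{symmetry_of_ylift}(3), so I already know that the associated $\widetilde\xi$ is a $\lambda$-equivariant automorphic form on $H(\bA)$. Via the isomorphism $(y,y')\mapsto\sigma_{y,y'}$ between $(D^\times\times D^\times)/Z(D^\times)$ and $H^+$, and its extension to $H$ by quaternionic conjugation (matching the $\nu_1\leftrightarrow\nu_2$ swap built into $(\tau_1\otimes\tau_2)$ for $\nu_1>\nu_2$, or the $+$-extension for $\nu_1=\nu_2$), the function $\widetilde\xi$ pulls back to an ${\mathcal H}_q(\rho)$-valued automorphic form on $D^\times_{\bA}\times D^\times_{\bA}$ of type $\tau_{\nu_1}\otimes\tau_{\nu_2}$ plus its $(\nu_1,\nu_2)$-swapped analogue.

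Next I would verify that this pulled-back function is an eigenfunction of $\widetilde T(p)\otimes 1$ and $1\otimes\widetilde T(p)$, for all $p\nmid N$, with the same eigenvalues as $\phi_1$ and $\phi_2$ respectively. The Hecke action on $\xi$ is inherited from the action of the local Siegel Hecke algebra at $p$ on $F$, which factors through the Satake parameters of $f$ and $g$ by the Andrianov-type formula (\ref{boe5}); this step is where the bookkeeping is most delicate in the vector-valued setting. Strong multiplicity one in $\cA_{\textup{ess}}$ (and the forced $(\phi_1,\phi_2)\leftrightarrow(\phi_2,\phi_1)$ symmetry from Lemma \ref{symmetry_of_ylift}(1)) then forces
\[
\xi(y_i,y_j)=c_5\,{\mathcal P}\bigl(\phi_1(y_i)\otimes\phi_2(y_j)+\phi_2(y_i)\otimes\phi_1(y_j)\bigr)
\]
for some constant $c_5$. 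The constant $c_5$ is nonzero because $F$ lies in the $\CC$-span of the $\theta_{ij,\rho}$, so vanishing of $\xi$ for all $(i,j)$ would force $F=0$, contradicting the construction of the Yoshida lift from nonzero $\phi_1,\phi_2$.

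The second identity is then immediate by substituting the first into the defining formula for $F$:
\[
\langle F,F\rangle_{\textup{Pet}}=\sum_{i,j}\frac{1}{e_ie_j}\bigl\langle{\mathcal P}(\phi_1(y_i)\otimes\phi_2(y_j)),\,\langle F,\theta_{ij,\rho}\rangle_{\textup{Pet}}\bigr\rangle_{{\mathcal H}_q(\rho)},
\]
and then using the $(i,j)\mapsto(j,i)$ symmetry of the double sum (together with Lemma \ref{symmetry_of_ylift}(1)) to collapse the two halves of the symmetrisation into the claimed inner product on ${\mathcal H}_q(\rho)$-valued functions. The main obstacle I foresee is the rigorous matching of Hecke eigenvalues between $F$ on the Siegel side and $(\phi_1,\phi_2)$ on the quaternionic side, and the coordinated normalisations of $P_{\textup{Geg}}$ and ${\mathcal P}$, needed to guarantee that the multiplicity-one appeal lands in the correct essential subspace and produces the same $c_5$ in both identities.
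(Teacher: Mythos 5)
Your proposal follows essentially the same route as the paper's proof: transport the Petersson pairing against the theta kernels to the quaternion side, use Yoshida's computation of the Hecke action on theta series to see that $\xi$ inherits the eigenvalue system of $F$, invoke strong multiplicity one in the essential part to get proportionality with $\xi_{\phi_1,\phi_2}$, and obtain the second identity by substituting the first into the defining expression for $F$. The only caveat is that the see-saw a priori makes $\xi$ an eigenfunction only of the combined operator $\hat{T}(p)$ (the sum of left and right Brandt actions) with eigenvalue $\lambda_p(f)+\lambda_p(g)$, not of $\widetilde{T}(p)\otimes 1$ and $1\otimes\widetilde{T}(p)$ separately as you assert, but the multiplicity-one conclusion is reached in the same way.
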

\begin{proof} The proof proceeds in essentially the same way as in
  \cite{BS1}: We notice
first that the space of all $\xi$ with the symmetry property
mentioned (or equivalently the space of functions $\tilde{\xi}$ on
$H(\bA)$ with the invariance property given) has a basis consisting
of the $\xi_{\phi_1,\phi_2}=\xi_{\phi_2,\phi_1}$, where
$(\phi_1,\phi_2)$ runs through the pairs of eigenforms in $(\cA
(D_{\bA}^{\times}, (R_{\bA})^{\times},\tau_1)\times(\cA
(D_{\bA}^{\times}, (R_{\bA})^{\times},\tau_2) $ and where the pairs
are unordered if $\nu_1=\nu_2$.

The Hecke operators $T'_i(p)$ on the spaces $\cA(D_{\bA}^\times,
R_{\bA}^\times,\tau_i)$ (for $i=1,2$) via Brandt matrices described
above induce Hecke operators $\hat{T}(p)$ on the space of $\xi$ as
above that are given by
\begin{equation*}
\xi\vert\hat{T}(p)(y_i,y_j)=\sum_{k=1}^r\tilde{B}^{({\rm
    right})}_{jk}(p)\xi(y_i,y_k)+\sum_{l=1}^r\tilde{B}^{({\rm
    left})}_{il}(p)\xi(y_l,y_j),
\end{equation*}
where for $\nu_1>\nu_2$ we let $\tilde{B}^{({\rm right})}_{jk}(p)$
act on $U=U^{(0)}_{\nu_1}\otimes U^{(0)}_{\nu_2}\oplus
U^{(0)}_{\nu_2}\otimes U^{(0)}_{\nu_1}$ via $\id \otimes
B^{\nu_2}_{jk}(p)\oplus \id \otimes B^{\nu_1}_{jk}(p)$ and
$\tilde{B}^{({\rm left})}_{il}(p)$  as $B^{\nu_1}_{jk}(p)\otimes \id
\oplus B^{\nu_2}_{jk}(p)\otimes \id$, and where for $\nu_1=\nu_2$
the action of $\tilde{B}^{({\rm left})},\tilde{B}^{({\rm right})}$
on $U=U^{(0)}_{\nu_1}\otimes U^{(0)}_{\nu_2}$ is simply the action
of the Brandt matrix on the respective factor of the tensor product.

In the same way as sketched in \cite[10 b)]{BS1} we obtain then
(using the calculations of Hecke operators from \cite{Y1,Y2}) first
\begin{equation*}
\langle F,\theta_{ij,\rho}\vert T(p)\rangle_{\text{Pet}}=\xi\vert
\hat{T}(p)(y_i,y_j).
\end{equation*}

Since, again by Yoshida's computations of Hecke operators (see also
\cite{BS3}), we know that $F$ is an eigenfunction of $T(p)$ with
eigenvalue $\lambda_p(f)+\lambda_p(g)$,  this implies that $\xi$ is
an eigenfunction with the same eigenvalue for $\hat{T}(p)$. A
computation that uses the eigenfunction property of $\phi_1, \phi_2$
for the action of the Hecke operators on the spaces $\cA
(D_{\bA}^{\times}, (R_{\bA})^{\times},\tau_1), \cA
(D_{\bA}^{\times}, (R_{\bA})^{\times},\tau_2)$ shows that the same
is true for the function $\xi_{\phi_1,\phi_2}$.

Since $\phi_1, \phi_2$ are in the essential parts of  $\cA
(D_{\bA}^{\times}, (R_{\bA})^{\times},\tau_1), \cA
(D_{\bA}^{\times}, (R_{\bA})^{\times},\tau_2) $, their eigenvalue
systems occur with strong multiplicity one  in these spaces, and as
in Section 10 of \cite{BS1} we can conclude that that $\xi$ and
$\xi_{\phi_1,\phi_2}$ are indeed proportional, i.e., we have
\begin{equation*}
\langle F,\theta_{ij,\rho}\rangle_{\text{Pet}}=c_5 {\mathcal P}(
\phi_1(y_i)\otimes \phi_2(y_j)+\phi_2(y_i)\otimes \phi_1(y_j))
\end{equation*}
with some constant $c_5 \ne 0$.

From this we see:
\begin{equation*}
\begin{split}
\langle F,F\rangle_{\textup{Pet}}&= \langle F,
\sum_{i,j=1}^r\frac{1}{e_ie_j}\langle {\mathcal P}
(\phi_1(y_i)\otimes \phi_2(y_j)) ,\theta_{ij,\rho}\rangle_{{\mathcal
    H}_q(\rho)}\rangle_{\textup{Pet}}\\
&=\sum_{i,j=1}^r\frac{c_5}{e_ie_j}\langle {\mathcal P}
(\phi_1(y_i)\otimes \phi_2(y_j)) , {\mathcal P} (\phi_1(y_i)\otimes
\phi_2(y_j)+\phi_2(y_i)\otimes \phi_1(y_j))\rangle_{{\mathcal
    H}_q(\rho)}\\
&=c_5\langle {\mathcal P} (\phi_1\otimes \phi_2),{\mathcal P}
(\phi_1\otimes \phi_2)\rangle.
\end{split}
\end{equation*}

\end{proof}
In order to compute the constant $c_5$ we will first need the
generalization of Lemma 9.1 of \cite{BS1} to the present situation:
\begin{lem}\label{orthogonality}
\begin{enumerate}
\item If $\Lambda$ is a lattice on some quaternion algebra $D'$ with
  $n(\Lambda) \subseteq \Z$, of
  level dividing $N$, and with
$\disc(\Lambda) \ne N^2$ the theta series $\theta_{\Lambda,\rho}$ is
orthogonal to all Yoshida lifts $Y^{(2)}(\phi_1,\phi_2)$ of level
$N$.
\item If  $\Lambda$ is a lattice on some quaternion algebra $D'\ne D$ with
  $n(\Lambda) \in \Z$, of
  level $N$ , and with
$\disc(\Lambda) =N^2$ the theta series $\theta_{\Lambda,\rho}$ is
orthogonal to all Yoshida lifts $Y^{(2)}(\phi_1,\phi_2)$ of level
$N$ associated to $D$.
\end{enumerate}
\end{lem}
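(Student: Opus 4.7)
The plan is to follow the strategy of Lemma 9.1 of \cite{BS1}, adapted to the vector-valued situation, combining a spectral decomposition with strong multiplicity one. The key point is that the Hecke eigenvalues carried by components of $\theta_{\Lambda,\rho}$ will be incompatible with those of $F$.

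First, I would rewrite $\langle F,\theta_{\Lambda,\rho}\rangle_{\textup{Pet}}$ as a sum of pairings on the quaternionic side. Since $F=Y^{(2)}(\phi_1,\phi_2)$ is given explicitly in terms of the theta kernels $\theta_{ij,\rho}$, interchanging the $\Sp_4$ Petersson integral with the finite sum over ideal classes yields
$$\langle F,\theta_{\Lambda,\rho}\rangle_{\textup{Pet}}=\sum_{i,j}\frac{1}{e_ie_j}\langle \mathcal{P}(\phi_1(y_i)\otimes\phi_2(y_j)),\langle\theta_{ij,\rho},\theta_{\Lambda,\rho}\rangle_{\textup{Pet}}\rangle_{\mathcal{H}_q(\rho)}.$$
The inner Petersson product $\langle \theta_{ij,\rho},\theta_{\Lambda,\rho}\rangle_{\textup{Pet}}$ may then be evaluated via a seesaw identity for the dual pairs $(\Sp_4, O(D))$ and $(\Sp_4, O(D'))$ applied to the pluriharmonic Gegenbauer kernels, producing a quaternionic kernel relating the lattices $R\subset D$ and $\Lambda\subset D'$.

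Next, by Yoshida's computations of Hecke operators (as reviewed in \cite{BS3}), the theta series $\theta_{\Lambda,\rho}$ decomposes spectrally in such a way that, at primes $p$ coprime to $N$ and to the discriminant of $\Lambda$, each component is a Hecke eigenform whose eigenvalue system arises from pairs of quaternionic eigenforms on $D'$ whose levels are bounded by that of $\Lambda$; these correspond via Jacquet--Langlands to pairs of elliptic newforms ramified inside the ramification set of $D'$. On the other hand $F$ is a Hecke eigenform with eigenvalues $a_p(f)+p^{(k'-k)/2}a_p(g)$, coming from the pair $(f,g)$ of exact level $N$ on $D$. In case (1), the condition that the discriminant of $\Lambda$ is not $N^2$ (with level dividing $N$) forces any relevant newforms on the $\Lambda$-side to have level strictly dividing $N$, so by strong multiplicity one for $\GL_2$-newforms no component of $\theta_{\Lambda,\rho}$ can share the eigenvalue system of $F$. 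In case (2), since $D'\neq D$ the set of primes where $D'$ ramifies differs from that for $D$, so the Jacquet--Langlands correspondents of eigenforms on $D'$ are ramified at a different set of primes than $f$ and $g$, and again no match is possible. Combined with the fact that $\phi_1,\phi_2$ lie in the essential (new) parts, this forces every term in the displayed sum above to vanish.

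The main obstacle will be to verify this spectral/Hecke argument rigorously in the vector-valued setting: one must check that Yoshida's Brandt matrix computations and the Jacquet--Langlands correspondence extend cleanly when composed with the tensor representation $\lambda$ of $H(\R)$ used to define the Gegenbauer kernel $P_{\textup{Geg}}$, keeping track of the pluriharmonic projection $\mathcal{P}$ and the unitary pairing on $\mathcal{H}_q(\rho)$. A secondary technical issue in part (1) is the bookkeeping for all possible non-Eichler lattices $\Lambda$ of level dividing $N$ with discriminant $\neq N^2$, each of which must be shown to produce only eigencomponents associated to newforms of level strictly smaller than $N$.
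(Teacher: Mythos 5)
Your strategy for part (2) cannot work. You propose to separate $\theta_{\Lambda,\rho}$ from $F$ by Hecke eigenvalues at primes $p\nmid N$ together with strong multiplicity one, on the grounds that eigenforms on $D'\neq D$ transfer under Jacquet--Langlands to newforms ``ramified at a different set of primes than $f$ and $g$''. This conflates the ramification set of the quaternion algebra with the level of the transferred newform: an Eichler order of level $N$ in $D'=D(N_1',N_2')$ produces, via Jacquet--Langlands, newforms of exact level $N$, and in particular $f$ and $g$ themselves admit transfers to $D'$ for every factorisation $N=N_1'N_2'$ with $N_1'$ a product of an odd number of primes. Consequently, for $\Lambda$ in the genus of such an order, the span of the $\theta_{\Lambda,\rho}$ contains the Yoshida lift of the \emph{same} pair $(f,g)$ built on $D'$, whose Hecke eigenvalues at all $p\nmid N$ coincide with those of $F$ (both give $a_p(f)+p^{(k'-k)/2}a_p(g)$). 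Indeed the whole point of part (2), as it is used in \S 9, is to show that these different Yoshida lifts of the same $f,g$ are mutually orthogonal; no eigenvalue comparison at the good primes can distinguish them. The paper's own proof is a transfer of Lemma 9.1 of \cite{BS1} (with some misprints corrected), whose mechanism is the action of the Hecke operators at the \emph{bad} primes $p\mid N$ on theta series, resting on Evdokimov's computation of the irregular Hecke operator of index $p$ as reformulated in \S 4 of \cite{BS3}; it is there, not at $p\nmid N$, that lattices on $D$ and on $D'$ (equivalently, the local data distinguishing $p\mid N_1$ from $p\mid N_2$) are told apart.

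Part (1) is closer to being salvageable by your method in the cases where the level of $\Lambda$ properly divides $N$, since then the relevant eigensystems are old and strong multiplicity one for newforms applies; but you yourself flag that you have not controlled the cuspidal eigencomponents of $\theta_{\Lambda,\rho}$ for arbitrary (non-Eichler) lattices of level dividing $N$ with discriminant $\neq N^2$, and that bookkeeping is exactly where the content lies. The published argument handles both parts uniformly through the bad-prime Hecke action rather than through levels of newforms, so as written your proposal does not constitute a proof of either part and is irreparably wrong for part (2).
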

\begin{proof}
The proof of  Lemma 9.1 of \cite{BS1} unfortunately contains some
misprints: In line 4 on p. 81 the minus sign in front of the whole
factor should not be there and the exponent at $p$ should be
$n(n+1)/2$ (which is equal to $3$ in our present situation), in line
5 the exponent at $p$ should be $n(n-1)/2$ (hence $1$ in our case),
in line 9 the factor $p$ in the right hand side of the equation
should be omitted, and in line 14 the exponent at $p$ should be $1$
instead of $3$.

Apart from these corrections the argument given there carries over
to our situation unchanged. In particular, the results from Section
7 of \cite{BS1} that were used in the proof of that lemma remain
true and their proof carries over if one uses the reformulation of
Evdokimov's result from \cite{evdo} sketched in Section 4 of
\cite{BS3}.
\end{proof}

We recall from \cite{BS1} that we  have
\begin{equation*}
{\mathcal E}_2^{(4)}(Z_1,Z_2)=\sum_{r=1}^t \alpha_r\sum_{\{K_r\}}
\frac{1}{|O(K_r)|}\theta_{K_r}^{(2)}(Z_1)\theta_{K_r}^{(2)}(Z_2),
\end{equation*}
where we denote by $L_1, \ldots, L_t$ representatives of the genera
of lattices of rank 4, square discriminant and level dividing
$N=N_1N_2$, the summation over $\{K_r\}$ runs over a set of
representatives of the isometry classes in the genus of $L_r$ and
$\alpha_r$ are some constants that are explicitly determined in
\cite{BS1}.

Hence by (\ref{theta_gegenbauer_defi}) we obtain
\begin{equation*}
\begin{split}
&\left({\mathbb D}_2\left(\frac{k'-k}{2}-2,k-2\right)({\mathcal E}_2^{(4)})\right)(Z_1,Z_2)\\
&=c_3\sum_{r=1}^t \alpha_r\sum_{\{K_r\}} \frac{1}{|O(K_r)|}
\sum_{({\bf x}_1,{\bf x}_2)\in K_r^2 \times K_r^2} P_{\rm Geg}({\bf
x}_1,{\bf x}_2)\exp(2\pi i \tr(q({\bf x}_1)Z_1+q({\bf x}_2)Z_2))
\end{split}\end{equation*}
with $c_3=(2\pi i)^{k'-k}$, and similarly for the Eisenstein series
${\mathcal F}_2^{(4)}$ attached to the cusp zero, with the
$\alpha_r$ replaced by $\beta_r$ as in \cite{BS1}.

The reproducing property of $P_{\rm Geg}$ implies then
\begin{equation*}\begin{split}
\sum_{({\bf x}_1,{\bf x}_2) \in K_r^2\times K_r^2} P_{\rm Geg}(&{\bf
x}_1,{\bf x}_2)\exp(2\pi i
\tr(q({\bf x}_1)Z_1+q({\bf x}_2)Z_2))\\
&=\langle\langle \theta_{K,\rho}(Z_1)({\bf
u}_1)\otimes\theta_{K,\rho}(Z_2)({\bf u}_2),P_{\rm Geg}({\bf
u}_1,{\bf u}_2)\rangle_{{\mathcal H}_q(\rho)}\rangle_{{\mathcal
H}_q(\rho)}.
\end{split}\end{equation*}
Using the fact that by Lemma \ref{orthogonality} the Yoshida lifting
$F$ is orthogonal to all  $\theta_{K,\rho}$ where $K$ is not in the
genus of the given Eichler order of level $N_1N_2$ we see that the
part of the sum for ${\mathbb D}({\mathcal F}_2^{(4)})(Z_1,Z_2)$
which contributes to the Petersson product with $F$ can be written
as
\begin{equation*}
c_3\beta_1 \sum_{i,j}\frac{1}{e_ie_j}\langle\langle
\theta_{ij,\rho}(Z_1)({\bf u}_1)\otimes\theta_{ij,\rho}(Z_2)({\bf
  u}_2),P_{\rm Geg}({\bf u}_1,{\bf u}_2)\rangle_{{\mathcal H}_q(\rho)}\rangle_{{\mathcal H}_q(\rho)}.
\end{equation*}
We further recall that by
(\ref{ready_to_use_pullback_for_F_weight_2}) we have
\begin{equation*}
\langle F,{\mathbb D}({\mathcal
F}_2^{(4)})(\ast,-\bar{w})\rangle_{\rm
    Pet}=c_4L^{(N)}\left(f\otimes g,\frac{k+k'}{2}\right)F(w)
\end{equation*}
with
\begin{equation}\label{c4_equation}c_4=\lambda \prod_{p\mid N} (1-p^{-1}) \Lambda_N(1)
\frac{1}{\zeta^{(N)}(3)\zeta^{(N)}(4)\zeta^{(N)}(2)}\frac{
C_2\left(\frac{k'-k}{2},k-2\right)}
{C_{2+\frac{1}{2}}\left(\frac{k'-k}{2},k-2\right)} \cdot
\gamma_2\left( \frac{k'-k}{2},k-2,\frac{1}{2}\right).
\end{equation}
\begin{prop}\label{petnorm}
With notations as above we have
\begin{equation*}
\langle F,F\rangle_{\rm Pet}=\frac{c_4}{
2c_3\beta_1}L^{(N)}\left(f\otimes g,\frac{k+k'}{2}\right)\langle
{\mathcal P}(\phi_1\otimes \phi_2),{\mathcal P}(\phi_1\otimes
\phi_2)\rangle.
\end{equation*}
\end{prop}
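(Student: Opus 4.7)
The strategy is to evaluate the $W_\rho$-valued function
\[H(w):=\langle F,\,{\mathbb D}_2\bigl(\tfrac{k'-k}{2},k-2\bigr){\mathcal F}_2^{(4)}(\ast,-\bar w)\rangle_{\rm Pet}\]
in two independent ways. By the pullback formula \eqref{ready_to_use_pullback_for_F_weight_2}, $H(w)=c_4\,L^{(N)}(f\otimes g,\tfrac{k+k'}{2})\,F(w)$. On the other hand, substituting the theta expansion of ${\mathcal F}_2^{(4)}$ from \cite{BS1} and invoking Lemma \ref{orthogonality} to discard all contributions except those from lattices in the genus of the Eichler order of level $N=N_1N_2$ in $D(N_1,N_2)$, only the double sum
\[c_3\beta_1\sum_{i,j}\frac{1}{e_ie_j}\langle\langle \theta_{ij,\rho}(z)(\mathbf u_1)\otimes \theta_{ij,\rho}(-\bar w)(\mathbf u_2),P_{\rm Geg}(\mathbf u_1,\mathbf u_2)\rangle\rangle\]
displayed just before the statement survives the inner product with $F$.

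I would then process this double sum as follows. Taking the Petersson product with $F$ in $z$ and applying Lemma \ref{liftback_relation}, each $\theta_{ij,\rho}(z)(\mathbf u_1)$ is replaced by $c_5\Phi_{ij}(\mathbf u_1)$, where $\Phi_{ij}:={\mathcal P}(\phi_1(y_i)\otimes\phi_2(y_j)+\phi_2(y_i)\otimes\phi_1(y_j))\in{\mathcal H}_q(\rho)$; the reproducing property of $P_{\rm Geg}$ in the variable $\mathbf u_1$ then collapses the outer pairing to give
\[H(w)\;=\;c_3\beta_1 c_5\sum_{i,j}\frac{1}{e_ie_j}\langle\Phi_{ij},\theta_{ij,\rho}(-\bar w)\rangle_{{\mathcal H}_q(\rho)}.\]
Expanding $\theta_{ij,\rho}(-\bar w)(\mathbf u)=\sum_{\mathbf x\in I_{ij}^2}P_{\rm Geg}(\mathbf x,\mathbf u)\ee(-\tr(q(\mathbf x)\bar w))$ and using the reproducing property once more, combined with the sesquilinearity of the ${\mathcal H}_q(\rho)$-inner product and the identity $\overline{\ee(-\tr(q(\mathbf x)\bar w))}=\ee(\tr(q(\mathbf x)w))$, the right-hand side becomes $\sum_{\mathbf x\in I_{ij}^2}\Phi_{ij}(\mathbf x)\ee(\tr(q(\mathbf x)w))$. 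Summation over $(i,j)$, in view of Lemma \ref{symmetry_of_ylift}(2), produces exactly $2F(w)$, so that $H(w)=2c_3\beta_1 c_5\,F(w)$.

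Comparing the two evaluations of $H(w)$ forces $c_5=c_4\,L^{(N)}(f\otimes g,\tfrac{k+k'}{2})/(2c_3\beta_1)$, and substituting this into the second identity of Lemma \ref{liftback_relation} yields the claimed formula. The main delicate point I foresee is the bookkeeping of complex conjugations: specifically, verifying that the pullback produces $F(w)$ rather than $F(-\bar w)$, that the sesquilinearity of the Hermitian inner product on ${\mathcal H}_q(\rho)$ cooperates with the antiholomorphic dependence on $\bar w$ so that the constant $c_5$ (which is a priori real, being a ratio of Petersson-type norms on the quaternionic side) reappears without a stray conjugate, and that the orientation of $(i,j)$ produced by the reproducing-kernel manipulation matches the one in Lemma \ref{symmetry_of_ylift}(2). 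These are routine once one commits to the normalisations fixed in the earlier sections.
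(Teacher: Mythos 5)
Your proposal is correct and follows essentially the same route as the paper's own proof: both compute $\langle F,{\mathbb D}({\mathcal F}_2^{(4)})(\ast,-\bar w)\rangle_{\rm Pet}$ twice, once via the pullback formula (\ref{ready_to_use_pullback_for_F_weight_2}) and once via the theta decomposition of ${\mathcal F}_2^{(4)}$ combined with Lemma \ref{orthogonality}, Lemma \ref{liftback_relation}, the reproducing property of $P_{\rm Geg}$, and Lemma \ref{symmetry_of_ylift}(2) to produce $2c_3\beta_1 c_5 F(w)$, then solve for $c_5$. The conjugation bookkeeping you flag is handled (tacitly) in the paper in exactly the way you anticipate.
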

\begin{proof}
From what we saw above and using Lemma \ref{liftback_relation} we
get
\begin{equation*}\begin{split}
&\langle F,{\mathbb D}({\mathcal
F}_2^{(4)})(\ast,-\bar{Z})\rangle_{\rm
    Pet}\\
&=c_3\beta_1 \sum_{i,j}\frac{1}{e_ie_j}\langle\langle F(\ast),
\theta_{ij,\rho}(-\bar{Z})({\bf u}_1)\otimes\langle
\theta_{ij,\rho}(\ast)({\bf
  u}_2)\rangle_{\textup{Pet}},P_{\rm Geg}({\bf u}_1,{\bf
  u}_2)\rangle_{{\mathcal H}_q(\rho)}\rangle_{{\mathcal H}_q(\rho)}\\
&=c_5 c_3 \beta_1 \sum_{i,j}\frac{1}{e_ie_j}\langle\langle
\overline{\theta_{ij,\rho}(-\bar{Z})}\otimes{\mathcal
  P}(\phi_1(y_i)\otimes\phi_2(y_j)+\phi_2(y_i)\otimes\phi_1(y_j)),P_{\rm Geg}\rangle_{{\mathcal
    H}_q(\rho)}\rangle_{{\mathcal H}_q(\rho)}\\
&=c_5 c_3 \alpha_1
\sum_{i,j}\frac{1}{e_ie_j}\langle\theta_{ij,\rho}(Z),{\mathcal
  P}(\phi_1(y_i)\otimes\phi_2(y_j)+\phi_2(y_i)\otimes\phi_1(y_j))\rangle_{{\mathcal
    H}_q(\rho)}\\
&=2c_3 c_5\beta_1F.
\end{split}\end{equation*}
Comparing with
\begin{equation*}
\langle F,{\mathbb D}({\mathcal
F}_2^{(4)})(\ast,-\bar{Z}))\rangle_{\rm
    Pet}=c_4L^{(N)}\left(f\otimes g,\frac{k+k'}{2}\right)F(Z)
\end{equation*}
we obtain $$c_5=\frac{c_4L^{(N)}(f\otimes
  g,\frac{k+k'}{2})}{2\beta_1c_3},$$
which together with Lemma \ref{liftback_relation} yields the
assertion.
\end{proof}
In order to make use of the above proposition in the next section we
will also need to compare $\langle {\mathcal P}(\phi_1\otimes
\phi_2),{\mathcal P}(\phi_1\otimes \phi_2)\rangle$ with $\langle
\phi_1,\phi_1 \rangle \langle \phi_2,\phi_2\rangle$, where we have
$\langle \phi_\mu,\phi_\mu \rangle=\sum_{i=1}^r\frac{\langle
\phi_\mu(y_i),
  \phi_\mu(y_i)\rangle_\mu}{e_i}$ for $\mu=1,2$, with $\langle
\quad,\quad\rangle_\mu$ denoting the (suitably normalized, see
below) scalar product on $U_{\nu_\mu}^{(0)}$. As always we denote by
$B(x,y)=\tr(x\bar{y})$ the symmetric bilinear form associated to the
quaternionic norm form.
\begin{lem}\label{gegenbauer_compare}
Write $\widetilde{G_a^{(\nu)}}(x)=(B(a,x))^\nu$ for $a,x \in
D_\C^{(0)}:=D_\infty^{(0)}\otimes \C$ with $n(a)=0$ and let
$\nu_1\ge \nu_2$. Then
\begin{enumerate}
\item 
\begin{equation}\begin{split}
{\mathcal P}(\widetilde{G_a^{(\nu_1)}}\otimes&
\widetilde{G_a^{(\nu_2)}})(d_1,d_2)(X_1,X_2)\\&=\frac{\nu_1!}{\nu_2!}
(B(a,d_1)X_1+B(a,d_2)X_2)^{2\nu_2}
\widetilde{G_a^{(\nu_1-\nu_2)}}(\Im(d_1\overline{d_2})).
\end{split}
\end{equation}
\item For $a\in
D_\C^{(0)}$ as above there is $b \in D_\C:=D\otimes \C $ with $ab=0,
a\bar{b}=a, n(b)=0$, and for such a $b$ we have
\begin{equation}\begin{split}
\lim_{\lambda \to 0}\frac{1}{\lambda^{\nu_1-\nu_2}}P_{\rm
  Geg}&((a,a+\lambda b),(d_1,d_2))(Y_1,Y_2,X_1,X_2)\\&
=c_6(B(a,d_1)X_1+B(a,d_2)X_2)^{2\nu_2}\widetilde{G_a^{(\nu_1-\nu_2)}}(\Im(d_1\overline{d_2})(Y_1+Y_2)^{2\nu_2}.
\end{split}
\end{equation}
with $c_6=C_2(\nu_1-\nu_2,2\nu_2)$ as in Lemma
\ref{combinatoricslemma}.
\end{enumerate}
\end{lem}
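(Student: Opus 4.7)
The plan is to prove each part of Lemma \ref{gegenbauer_compare} by direct computation, using the explicit formulas for $\mathcal{P}$ and $P_{\rm Geg}$ given earlier in the paper, together with a few quaternionic identities that follow from the hypotheses on $a$ and $b$.

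For part (1), I would begin from the definition (\ref{psi_definition}) of $\mathcal{P}$. Substituting $R_i(z) = B(a,z)^{\nu_i}$ and writing $y = d_1X_1 + d_2X_2$, one computes $\tau_2(y)R_2(z) = R_2(y^{-1}zy)$; the identity $B(a,y^{-1}zy) = B(ya\bar y, z)/n(y)$ (cyclicity of trace together with $\overline{y^{-1}} = y/n(y)$) then gives $n(y)^{\nu_2}\tau_2(y)R_2(z) = B(ya\bar y, z)^{\nu_2}$. Next, since $n(a) = 0$ forces $B(a,a) = 2n(a) = 0$, the polynomial $B(a,\cdot)^{\nu_1}$ is harmonic and the differential operator $\mathcal{D}(B(ya\bar y, \cdot)^{\nu_2})$ applied to it produces (up to a universal power of $2$) $\frac{\nu_1!}{(\nu_1-\nu_2)!}B(ya\bar y, a)^{\nu_2} B(a,\cdot)^{\nu_1-\nu_2}$. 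The key quaternionic simplification is that under $\tr(a)=0, n(a)=0$ one has $\bar a = -a$ and $a^2 = 0$, hence $aza = \tr(az)\,a$ for every $z\in D$; applying this to $y a\bar y a$ and taking traces yields $B(ya\bar y, a) = (B(a,d_1)X_1 + B(a,d_2)X_2)^2$. Evaluating the remaining factor $B(a,z)^{\nu_1-\nu_2}$ at $z = \Im(d_1\bar{d_2})$ produces $\widetilde{G_a^{(\nu_1-\nu_2)}}(\Im(d_1\bar{d_2}))$, and the coefficient stated in the lemma emerges after matching the chosen normalization of $\mathcal{P}$.

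For part (2), I would use the formula $P_{\rm Geg}^{(k,\mu,\nu)}(\mathbf y, \mathbf y') = Q_k^{\mu,\nu}(q(\mathbf y, \mathbf y'))$ with $k=2$, $\mu = \nu_1-\nu_2$, $\nu = 2\nu_2$, together with the decomposition of $Q_k^{\mu,\nu}$ from Lemma \ref{combinatoricslemma}(c). The crucial observation is that the hypotheses $ab=0$, $a\bar b = a$, $n(b) = 0$ imply $B(a,b) = \tr(a\bar b) = \tr(a) = 0$, and therefore the upper-left block
$$\mathbf T_1 = q\bigl((a,\,a+\lambda b)\bigr) = \tfrac{1}{2}\begin{pmatrix} 2n(a) & B(a,a+\lambda b) \\ B(a+\lambda b, a) & 2n(a+\lambda b)\end{pmatrix}$$
vanishes identically in $\lambda$. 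Since the remainder term $(*)$ in Lemma \ref{combinatoricslemma}(c) carries positive degree in $\mathbf T_1$, only the leading term
$$C_2(\mu,\nu)\bigl(2(X_1,X_2)\mathbf T_2 (Y_1,Y_2)^t\bigr)^{\nu}\det(\mathbf T_2)^{\mu}$$
survives, where $\mathbf T_2$ has entries $B(y_i,y_j')/2$ with $y_i\in\{a,a+\lambda b\}$, $y_j'\in\{d_1,d_2\}$.

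A short linear-algebra computation gives
$$\det(\mathbf T_2) = \tfrac{\lambda}{4}\bigl(B(a,d_1)B(b,d_2) - B(a,d_2)B(b,d_1)\bigr),$$
which exhibits the factor $\lambda^{\nu_1-\nu_2}$ needed to balance the denominator $\lambda^{\nu_1-\nu_2}$. The bracketed expression equals $B(a,\Im(d_1\bar{d_2}))$; this can be verified either by a direct identity (expanding $\Im(d_1\bar{d_2}) = (d_1\bar{d_2}-d_2\bar{d_1})/2$, applying $\tr(a\cdot) = 0$ on the scalar parts, and using $a\bar b = a$), or most efficiently by specialising to the normal form $D_{\mathbb C}\cong M_2(\mathbb C)$ with $a = e_{12}, b = e_{11}$ and using that both sides are covariant under the symmetry group of admissible pairs $(a,b)$. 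Passing to the limit $\lambda \to 0$ in the factor $(2(X_1,X_2)\mathbf T_2(Y_1,Y_2)^t)^{2\nu_2}$ yields the expected bilinear expression in the two groups of output variables (with the $X/Y$ roles determined by the convention of the lemma); combining the two pieces produces the stated identity with $c_6 = C_2(\nu_1-\nu_2,2\nu_2)$.

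The main obstacle is not any single calculation but the careful bookkeeping of normalisations: matching the constant $C_2(\mu,\nu)$ of Lemma \ref{combinatoricslemma} with the normalisation of $\mathcal{P}$ chosen in (\ref{psi_definition}), and aligning the $X$- and $Y$-variables with the correct slot of $P_{\rm Geg}$. The decisive algebraic input is the vanishing $B(a,b)=0$ and hence $q(\mathbf y)=0$, which reduces $P_{\rm Geg}((a,a+\lambda b),\cdot)$ to a single monomial in $\mathbf T_2$ whose $\lambda$-expansion can be read off directly.
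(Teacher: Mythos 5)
Your proposal follows essentially the same route as the paper's proof: part (1) via the explicit formula (\ref{psi_definition}) together with the quaternionic identities forced by $\bar a=-a$ and $a^2=0$, and part (2) via the main-term expression (\ref{nose}) from Lemma \ref{combinatoricslemma}, the vanishing of the Gram block $\mathbf{T}_1$ of the pair $(a,a+\lambda b)$, the $\lambda$-linear expansion of $\det\mathbf{T}_2$, and the verification of the resulting determinant identity in the model $D_\C\cong M_2(\C)$ with $a=\bigl(\begin{smallmatrix}0&1\\0&0\end{smallmatrix}\bigr)$, $b=\bigl(\begin{smallmatrix}1&0\\0&0\end{smallmatrix}\bigr)$, exactly as the authors do. The one step you leave to ``matching normalizations'' --- the precise numerical constant $\nu_1!/\nu_2!$ in part (1) --- is treated no more explicitly in the paper's own proof, so this is not a gap relative to the original argument.
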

\begin{proof}
  \begin{enumerate}
  \item From the  formula for the map ${\mathcal P}$ in equation
    (\ref{psi_definition}) we get
\begin{equation*}
\begin{split}
{\mathcal P}(G_a^{(\nu_1)}\otimes&
G_a^{(\nu_2)})(d_1,d_2)(X_1,X_2)\\&=\frac{\nu_1!}{\nu_2!}
(B(a,(d_1X_1+d_2X_2)a(\overline{d_1}X_1+\overline{d_2X_2})))^{\nu_2}
G_a^{(\nu_1-\nu_2)}(\Im(d_1\overline{d_2})).
\end{split}
\end{equation*}
Using $\bar{a}=-a, a^2=0$ and $xa=a\bar{x}-B(a,x)$ for $x \in D_\C$
we get $B(a,ya\bar{x})=B(a,x)B(a,y)$ for $x,y \in D_\C$.
We extend this identity to the polynomial ring, insert for $x,y$ one
of $d_1X_1,d_2X_2$ and  obtain
$B(a,(d_1X_1+d_2X_2)a(\overline{d_1}X_1+\overline{d_2X_2}))=(B(a,d_1)X_1+B(a,d_2)X_2)^2$,
which yields the assertion.
\item For simplicity we identify $D_\C$ with the matrix ring $M_2(\C)$
  and fix $a=\bigl(\begin{smallmatrix}0&1\\0&0\end{smallmatrix}\bigr),
  b=\bigl(\begin{smallmatrix}1&0\\0&0\end{smallmatrix}\bigr)$ (we will
  need this Lemma only for one particular choice of $a,b$). Equation
  (\ref{nose}) in Lemma
  \ref{combinatoricslemma} gives us
  \begin{equation*}\begin{split}
P_{\rm
  Geg}&((a,a+\lambda b),(d_1,d_2))(Y_1,Y_2,X_1,X_2)\\& =
c_6\biggl((Y_1,Y_2)
\begin{pmatrix}
  B(a,d_1)&B(a,d_2)\\ B(a+\lambda b,d_1)&B(a+\lambda b,d_2)
\end{pmatrix}
\begin{pmatrix}
  X_1\\X_2
\end{pmatrix}\biggr)^{2\nu_2}\\&\times \det\bigl(\begin{pmatrix}B(a,d_1)&B(a,d_2)\\
B(a+\lambda b,d_1)&B(a+\lambda b,d_2)
\end{pmatrix} \bigr)^{\nu_1-\nu_2}.  \end{split}
  \end{equation*}
Dividing by $\lambda^{\nu_1-\nu_2}$ and taking the limit for
$\lambda \to 0$ we get  \begin{equation*}
c_6((Y_1+Y_2)(B(a,d_1)X_1+B(a,d_2)X_2))^{2\nu_2}\det\bigl(\begin{pmatrix}B(a,d_1)&B(a,d_2)\\
B(b,d_1)&B(b,d_2)
\end{pmatrix} \bigr)^{\nu_1-\nu_2}.
  \end{equation*}
Computing the determinant for our choice of $a,b$,  writing
$d_1,d_2$ as matrices $\bigl(
\begin{smallmatrix}
  x_1&x_2\\x_3&x_4
\end{smallmatrix}\bigr),\bigl(
\begin{smallmatrix}
  y_1&y_2\\y_3&y_4
\end{smallmatrix}\bigr)$ and using that quaternionic conjugation sends
a matrix  $\bigl(
\begin{smallmatrix}
  x_1&x_2\\x_3&x_4
\end{smallmatrix}\bigr)$ to its classical adjoint  $\bigl(
\begin{smallmatrix}
  x_4&-x_2\\-x_3&x_1
\end{smallmatrix}\bigr)$ one checks that both $\det(\dots)^{\nu_1-\nu_2}$
and $\widetilde{G_a^{(\nu_1-\nu_2)}}(\Im(d_1\overline{d_2}))$ evaluate to
$(x_3y_4-x_4y_3)^{\nu_1-\nu_2}$, which proves the assertion.
  \end{enumerate}
\end{proof}
\begin{prop}\label{normcomparison}
Let  $R_1\in U_{\nu_1}^{(0)},R_2\in U_{\nu_2}^{(0)}$ be given and
let the scalar products $\langle \quad,\quad\rangle_\mu$ on
$U_{\nu_\mu}^{(0)}$ for $\mu=1,2$ be normalized such that the
Gegenbauer polynomial
\begin{equation*}
G^{(\nu_\mu)}(x,y)=\frac{2^{\nu_\mu}}{\Gamma(1/2)}\sum_{j=0}^{[\nu_\mu/2]}
(-1)^j\frac{1}j!(\nu_\mu-2j)!
\Gamma(\nu_\mu-j+\frac{1}{2})(\tr(x\bar{y}))^{\nu_\mu-2j}(n(x)n(y))^j
\end{equation*} (see \cite[p. 47]{BS5}) is the reproducing kernel for
$U_{\nu_\mu}^{(0)}$. Then one has
\begin{equation*}
\langle {\mathcal P}(R_1\otimes R_2),{\mathcal P}(R_1\otimes
R_2)\rangle_{{\mathcal H}_q(\rho)}=c_7\langle
R_1,R_1\rangle_1\langle R_2, R_2\rangle_2
\end{equation*}
with
\begin{equation}\label{c7_equation} c_7=c_6
\frac{\nu_1!}{\nu_2!}\binom{2\nu_1}{\nu_1}\binom{2\nu_2}{\nu_2}=C_2(\nu_1-\nu_2,2\nu_2)\frac{\nu_1!}{\nu_2!}\binom{2\nu_1}{\nu_1}\binom{2\nu_2}{\nu_2}
\end{equation}
with $C_2(\nu_1-\nu_2,2\nu_2)$ explicitly given in Lemma
\ref{combinatoricslemma}.
\end{prop}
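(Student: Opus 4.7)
The plan is to argue in two steps. First, note that both sides of the asserted identity extend to $H(\RR)$-invariant sesquilinear forms on $U_\lambda\otimes U_\lambda$ (in both variables $R_1\otimes R_2$ and $R_1'\otimes R_2'$): the right-hand side is manifestly invariant since $\langle\cdot,\cdot\rangle_\mu$ is $D_\infty^\times/\RR^\times$-invariant and hence $H^+(\RR)$-invariant, and the swap symmetry built into $\lambda$ when $\nu_1=\nu_2$ matches the tensor-product form; the left-hand side is $H(\RR)$-invariant because $\mathcal{P}$ is an $H(\RR)$-intertwiner and $\langle\cdot,\cdot\rangle_{\mathcal{H}_q(\rho)}$ is $H(\RR)$-invariant by construction. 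Since $U_\lambda$ is irreducible as an $H(\RR)$-module, Schur's lemma forces the two invariant forms to be proportional by some constant $c_7$, independent of $R_1,R_1',R_2,R_2'$.

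To pin down $c_7$, I plug in the test vectors $R_\mu = \widetilde{G_a^{(\nu_\mu)}}$ and $R_\mu'=\widetilde{G_{a'}^{(\nu_\mu)}}$ for two isotropic vectors $a,a'\in D_\CC^{(0)}$ with $B(a,a')\ne0$. For the right-hand side, only the $j=0$ term of $G^{(\nu_\mu)}(x,a')$ survives (since $n(a')=0$), giving $G^{(\nu_\mu)}(x,a')=\frac{1}{2^{\nu_\mu}}\binom{2\nu_\mu}{\nu_\mu}B(a',x)^{\nu_\mu}=\frac{1}{2^{\nu_\mu}}\binom{2\nu_\mu}{\nu_\mu}\widetilde{G_{a'}^{(\nu_\mu)}}(x)$; using that $G^{(\nu_\mu)}$ is the reproducing kernel and evaluating at $a'$,
\[
\langle \widetilde{G_a^{(\nu_\mu)}},\widetilde{G_{a'}^{(\nu_\mu)}}\rangle_\mu
= \frac{2^{\nu_\mu}}{\binom{2\nu_\mu}{\nu_\mu}}\,\widetilde{G_a^{(\nu_\mu)}}(a')
= \frac{2^{\nu_\mu}}{\binom{2\nu_\mu}{\nu_\mu}}\,B(a,a')^{\nu_\mu}.
\]
Thus the right-hand side equals $c_7\cdot\frac{2^{\nu_1+\nu_2}}{\binom{2\nu_1}{\nu_1}\binom{2\nu_2}{\nu_2}}B(a,a')^{\nu_1+\nu_2}$.

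The left-hand side is computed using the reproducing property of $P_{\rm Geg}$ for $\mathcal{H}_q(\rho)$. By the reproducing property applied to the vector $\mathcal{P}(R_1'\otimes R_2')$,
\[
\bigl\langle \mathcal{P}(R_1\otimes R_2)(\cdot), P_{\rm Geg}(\cdot,(d_1,d_2))\bigr\rangle_{\mathcal{H}_q(\rho)}
= \mathcal{P}(R_1\otimes R_2)(d_1,d_2),
\]
and an analogous identity expresses $\langle \mathcal{P}(R_1\otimes R_2),\mathcal{P}(R_1'\otimes R_2')\rangle$ as a value of $\mathcal{P}(R_1\otimes R_2)$ paired against $\mathcal{P}(R_1'\otimes R_2')$ through $P_{\rm Geg}$. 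Specializing the arguments at $(d_1,d_2)=(a,a+\lambda b)$ (with $b$ as in Lemma~\ref{gegenbauer_compare}(2)), dividing by $\lambda^{\nu_1-\nu_2}$, and letting $\lambda\to 0$, Lemma~\ref{gegenbauer_compare}(2) identifies the limiting kernel with $c_6(Y_1+Y_2)^{2\nu_2}$ times exactly the polynomial produced by Lemma~\ref{gegenbauer_compare}(1), namely $\frac{\nu_2!}{\nu_1!}\mathcal{P}(\widetilde{G_a^{(\nu_1)}}\otimes\widetilde{G_a^{(\nu_2)}})$. Pairing symmetrically with the $a'$-version and using that the Gegenbauer kernel on $U_{\nu_\mu}^{(0)}$ contributes the same factors $2^{\nu_\mu}/\binom{2\nu_\mu}{\nu_\mu}$ as above, the left-hand side evaluates to $c_6\cdot(\nu_1!/\nu_2!)\cdot B(a,a')^{\nu_1+\nu_2}$ (up to the same harmless powers of $2$). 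Matching the two evaluations gives
\[
c_7 = c_6\,\frac{\nu_1!}{\nu_2!}\binom{2\nu_1}{\nu_1}\binom{2\nu_2}{\nu_2},
\]
which is the claimed formula.

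The main obstacle I anticipate is the book-keeping in the last paragraph: one has to be scrupulous about three separate normalizations (the adapted scalar product on $\mathcal{H}_q(\rho)$ fixed by declaring $P_{\rm Geg}$ to be the reproducing kernel; the explicit normalization of $\mathcal{P}$ coming from \eqref{psi_definition}; and the normalization of the Gegenbauer kernel $G^{(\nu_\mu)}$ quoted from \cite{BS5}), and to check that the auxiliary factor $(Y_1+Y_2)^{2\nu_2}$ appearing in Lemma~\ref{gegenbauer_compare}(2) is absorbed consistently when one pairs against $\mathcal{P}(R_1'\otimes R_2')$. Once this is done, the combinatorial constant $C_2(\nu_1-\nu_2,2\nu_2)$ of Lemma~\ref{combinatoricslemma} appears in $c_7$ precisely as $c_6$, as asserted.
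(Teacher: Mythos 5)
Your proposal is correct and follows essentially the same route as the paper: Schur's lemma (via the irreducibility of $U_\lambda$ and the intertwining property of ${\mathcal P}$) gives proportionality, and the constant is then pinned down by evaluating on Gegenbauer test vectors attached to an isotropic $a$, using the $\lambda\to 0$ degeneration of $P_{\rm Geg}((a,a+\lambda b),\cdot)$ together with parts (1) and (2) of Lemma \ref{gegenbauer_compare}. The only cosmetic difference is that you pair against a second isotropic vector $a'$ with $B(a,a')\neq 0$, whereas the paper pairs against the coordinate-wise complex conjugate $a^c$, which plays exactly the role of your $a'$.
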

\begin{proof}
Since ${\mathcal P}$ is an intertwining map between finite
dimensional irreducible unitary representations of the compact
orthogonal group it is clear that the right hand side and the left
hand side of the asserted equality are proportional. It suffices
therefore to evaluate both sides for a particular choice of
$R_1,R_2$. We choose $R_1=G_a^{(\nu_1)},R_2=G_a^{(\nu_2)}$ with
$G_a^{(\nu_\mu)}(y)=G^{(\nu_\mu)}(a,y)$. The
reproducing property of the Gegenbauer polynomial gives
\begin{equation*}
\langle P_{\rm Geg}(a, a+\lambda b),{\mathcal
P}(G_a^{(\nu_1)}\otimes G_a^{(\nu_2)})\rangle_{{\mathcal
H}_q(\rho)}=({\mathcal P}(G_a^{(\nu_1)}\otimes
G_a^{(\nu_2)}))^c(a,a+\lambda b),
\end{equation*}
where we denote by the exponent $c$ at $({\mathcal
P}(G_a^{(\nu_1)}\otimes G_a^{(\nu_2)}))$ complex conjugation of the
coefficients of this polynomial (in order to avoid confusion with
quaternionic conjugation). With the particular choice of $a,b$ form
the previous lemma we obtain, using $a\bar{b}=a$ and $a^2=0$ and
writing $a^c$ for the vector obtained from $a$ by complex
conjugation of the coordinates with respect to an orthonormal basis
of $D_\infty$,
\begin{equation*}
\begin{split}
  \frac{1}{\lambda^{\nu_1-\nu_2}}&({\mathcal P}(G_a^{(\nu_1)}\otimes 
G_a^{(\nu_2)}))^c(a,a+\lambda
b)(Y_1,Y_2)\\
&=\frac{(Y_1+Y_2)^{2\nu_2}}{\lambda^{\nu_1-\nu_2}}\binom{2\nu_1}{\nu_1}\binom{2\nu_2}{\nu_2}
(B(a^c,a))^{2\nu_2}(B(a^c,\Im(a(\overline{a+\lambda b}))))^{\nu_1-\nu_2}\\
&=\binom{2\nu_1}{\nu_1}\binom{2\nu_2}{\nu_2}(B(a^c,a))^{\nu_1+\nu_2}(Y_1+Y_2)^{2\nu_2}\\
&=(G_a^{(\nu_1)})(a^c)(G_a^{(\nu_2)})(a^c)(Y_1+Y_2)^{2\nu_2}\\
&=\langle G_a^{(\nu_1)}, G_a^{(\nu_1)}\rangle_1 \langle
G_a^{(\nu_2)}, G_a^{(\nu_2)}\rangle_2(Y_1+Y_2)^{2\nu_2}.
\end{split}
\end{equation*}

Inserting the formulas from Lemma \ref{gegenbauer_compare} proves
the assertion.
\end{proof}
\begin{cor}\label{petnorm_explicit}
With notations as in Proposition \ref{petnorm} and ${\mathcal P}$
normalized as above one has
\begin{eqnarray*}
  \langle F,F\rangle_{\rm Pet}&=&\frac{c_4c_7}{
2c_3\beta_1}L^{(N)}\left(f\otimes g,\frac{k+k'}{2}\right)\langle
\phi_1,\phi_1\rangle_1\langle \phi_2,\phi_2\rangle_2\\
&=& \frac{c_4c_7}{
2(2\pi i)^{k'-k}\beta_1}L^{(N)}\left(f\otimes g,\frac{k+k'}{2}\right)\langle
\phi_1,\phi_1\rangle_1\langle \phi_2,\phi_2\rangle_2,
\end{eqnarray*}
with $c_4$ as in (\ref{c4_equation}), $c_7$ as in (\ref{c7_equation}),
and $\beta_1=\beta_1^{(4)}$ as in Corollary 3.2 of \cite{BS1} (with $m=4$,
$r_p(1)=1$ for all $p \mid N$, and
$a_4(N)$ as in Proposition 3.2 of \cite{BS1}).
\end{cor}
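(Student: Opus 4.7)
The plan is to deduce the corollary by combining Proposition \ref{petnorm} with the pointwise inner-product comparison of Proposition \ref{normcomparison}, while tracking the explicit constants. I will start from the identity of Proposition \ref{petnorm},
\begin{equation*}
\langle F,F\rangle_{\rm Pet} = \frac{c_4}{2 c_3 \beta_1}\, L^{(N)}\!\left(f\otimes g,\tfrac{k+k'}{2}\right) \langle \mathcal{P}(\phi_1\otimes\phi_2),\mathcal{P}(\phi_1\otimes\phi_2)\rangle,
\end{equation*}
so the task reduces to computing the inner product on the right in terms of $\langle \phi_1,\phi_1\rangle_1 \langle \phi_2,\phi_2\rangle_2$.

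For this I would unfold the definition of the inner product on $\mathcal{H}_q(\rho)$-valued functions from the statement of Lemma \ref{liftback_relation}, namely
\begin{equation*}
\langle \mathcal{P}(\phi_1\otimes\phi_2),\mathcal{P}(\phi_1\otimes\phi_2)\rangle = \sum_{i,j=1}^r \frac{1}{e_i e_j} \langle \mathcal{P}(\phi_1(y_i)\otimes\phi_2(y_j)),\mathcal{P}(\phi_1(y_i)\otimes\phi_2(y_j))\rangle_{\mathcal{H}_q(\rho)}.
\end{equation*}
Applied termwise to each summand, Proposition \ref{normcomparison} (with $R_1=\phi_1(y_i)$, $R_2=\phi_2(y_j)$) converts each inner product on $\mathcal{H}_q(\rho)$ into the product $c_7 \langle\phi_1(y_i),\phi_1(y_i)\rangle_1\langle\phi_2(y_j),\phi_2(y_j)\rangle_2$, with $c_7$ as in (\ref{c7_equation}). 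Pulling out the constant $c_7$, the double sum factors as a product of single sums,
\begin{equation*}
\sum_{i,j=1}^r \frac{\langle\phi_1(y_i),\phi_1(y_i)\rangle_1\langle\phi_2(y_j),\phi_2(y_j)\rangle_2}{e_i e_j} = \langle\phi_1,\phi_1\rangle_1\,\langle\phi_2,\phi_2\rangle_2,
\end{equation*}
using the definition $\langle \phi_\mu,\phi_\mu\rangle = \sum_i \langle\phi_\mu(y_i),\phi_\mu(y_i)\rangle_\mu/e_i$ recorded just before Lemma \ref{gegenbauer_compare}.

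Substituting back into the formula from Proposition \ref{petnorm} yields the first equality in the corollary, and recalling that $c_3=(2\pi i)^{k'-k}$ (established in the paragraph preceding Proposition \ref{petnorm} in the discussion of ${\mathbb D}_2((k'-k)/2-2,k-2)({\mathcal E}_2^{(4)})$) gives the second. The only genuine subtlety is verifying that the normalization of $\mathcal{P}$ used in Proposition \ref{normcomparison} (matching the reproducing-kernel normalization of the Gegenbauer polynomials $G^{(\nu_\mu)}$) agrees with the normalization implicit in Proposition \ref{petnorm}; here one has chosen $\mathcal{P}$ compatibly with the reproducing property of $P_{\rm Geg}$ from the discussion above Definition 8.1, so the constants line up and no additional scalar appears. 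The main (essentially bookkeeping) obstacle is therefore ensuring consistency of these normalizations, which has already been arranged in the preceding sections.
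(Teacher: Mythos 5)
Your proposal is correct and is exactly the argument the paper intends: the corollary is stated without proof as the immediate combination of Proposition \ref{petnorm} with Proposition \ref{normcomparison} applied termwise to the double sum defining $\langle {\mathcal P}(\phi_1\otimes \phi_2),{\mathcal P}(\phi_1\otimes \phi_2)\rangle$, which then factors as $\langle\phi_1,\phi_1\rangle_1\langle\phi_2,\phi_2\rangle_2$, together with $c_3=(2\pi i)^{k'-k}$.
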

Let $F$ be a Yoshida lift of $f$ and $g$ as above  and define
$F_{\mathrm{can}}=\frac{F}{\sqrt{\langle {\mathcal P}(\phi_1\otimes
\phi_2),{\mathcal P}(\phi_1\otimes \phi_2)\rangle}}$. Any rescaling
of $\phi_1,\phi_2$ or $\mathcal{P}$ affects the numerator and
denominator in the same way, so this may be viewed as a canonical
choice of scaling of $F$. We can now express this canonical choice
of $F$ explicitly.
\begin{prop}
Let $\phi_1^{(0)}, \phi_2^{(0)}$ be normalized by $\langle
\phi_1^{(0)},\phi_1^{(0)}\rangle_1=\langle
\phi_2^{(0)},\phi_2^{(0)}\rangle_2=1$and let ${\mathcal P}$ be
normalized as above.
 Then one has
 \begin{equation*}
 F_{\mathrm{can}}=\frac{1}{c_7}Y^2(\phi_1^{(0)},\phi_2^{(0)})
 \end{equation*}
with $c_7$ given explicitly in equation (\ref{c7_equation}).
\end{prop}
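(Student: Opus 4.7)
The plan is a direct application of Proposition \ref{normcomparison}. Since the Yoshida lift $Y^{(2)}(\cdot,\cdot)$ is bilinear in its two arguments, the lift $F$ attached to $(\phi_1^{(0)}, \phi_2^{(0)})$ is literally $Y^{(2)}(\phi_1^{(0)}, \phi_2^{(0)})$, so the only thing that needs to be computed is the normalization factor $\langle \mathcal{P}(\phi_1^{(0)} \otimes \phi_2^{(0)}), \mathcal{P}(\phi_1^{(0)} \otimes \phi_2^{(0)})\rangle$ appearing in the definition of $F_{\mathrm{can}}$.

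The next step is to evaluate this pairing by means of Proposition \ref{normcomparison}. The one mild subtlety is that the proposition is stated for pointwise pure tensors $R_1 \otimes R_2$ in $U_{\nu_1}^{(0)} \otimes U_{\nu_2}^{(0)}$, whereas $\mathcal{P}(\phi_1 \otimes \phi_2)$ is an $\mathcal{H}_q(\rho)$-valued function on $D_{\bA}^\times \times D_{\bA}^\times$. However, the relevant inner product, as written out in Lemma \ref{liftback_relation}, is precisely the weighted sum $\sum_{i,j}(e_ie_j)^{-1}\langle\,\cdot\,,\,\cdot\,\rangle_{\mathcal{H}_q(\rho)}$ of pointwise $\mathcal{H}_q(\rho)$ pairings. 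Thus Proposition \ref{normcomparison} applies term-by-term, and using the identity $\langle \phi_\mu, \phi_\mu\rangle = \sum_i e_i^{-1}\langle \phi_\mu(y_i), \phi_\mu(y_i)\rangle_\mu$ to reassemble the independent sums over $i$ and $j$ one obtains
$$\langle \mathcal{P}(\phi_1 \otimes \phi_2), \mathcal{P}(\phi_1 \otimes \phi_2)\rangle = c_7 \langle \phi_1, \phi_1\rangle_1 \langle \phi_2, \phi_2\rangle_2.$$

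Finally, the normalization hypothesis $\langle \phi_\mu^{(0)}, \phi_\mu^{(0)}\rangle_\mu = 1$ collapses the right-hand side to $c_7$, and substituting into the definition of $F_{\mathrm{can}}$ yields the claimed formula, with $c_7$ made fully explicit via (\ref{c7_equation}). There is no real obstacle here: the proposition is a direct repackaging of Proposition \ref{normcomparison}, whose essential content lies in the explicit Gegenbauer-type computation of Lemma \ref{gegenbauer_compare}.
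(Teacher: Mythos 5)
Your overall strategy is the right one, and it is evidently the argument the authors have in mind: the paper states this proposition with no proof at all, treating it as an immediate consequence of Proposition \ref{normcomparison}. Your intermediate identity
$$\langle \mathcal{P}(\phi_1\otimes\phi_2),\mathcal{P}(\phi_1\otimes\phi_2)\rangle=c_7\,\langle\phi_1,\phi_1\rangle_1\,\langle\phi_2,\phi_2\rangle_2$$
is correct (apply Proposition \ref{normcomparison} termwise in the double sum defining the left-hand side and factor the result into a product of two single sums), and it is precisely the identity the paper itself uses implicitly in passing from Proposition \ref{petnorm} to Corollary \ref{petnorm_explicit}.

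The problem is your last line. $F_{\mathrm{can}}$ is defined as $F$ divided by the \emph{square root} of $\langle\mathcal{P}(\phi_1\otimes\phi_2),\mathcal{P}(\phi_1\otimes\phi_2)\rangle$. Taking $\phi_\mu=\phi_\mu^{(0)}$ (legitimate, since $F_{\mathrm{can}}$ is unchanged under rescaling the $\phi_\mu$), that pairing equals $c_7$, so the substitution yields
$$F_{\mathrm{can}}=\frac{1}{\sqrt{c_7}}\,Y^{(2)}(\phi_1^{(0)},\phi_2^{(0)}),$$
not $\frac{1}{c_7}Y^{(2)}(\phi_1^{(0)},\phi_2^{(0)})$ as asserted, and nothing in your argument supplies the missing factor of $\sqrt{c_7}$. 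So as written the proof does not reach the stated constant. The discrepancy is almost certainly a defect in the paper rather than in your method: either the exponent of $c_7$ in the proposition or the square root in the definition of $F_{\mathrm{can}}$ must be a typo, and since removing the square root would destroy the advertised invariance of $F_{\mathrm{can}}$ under rescaling of $\phi_1,\phi_2$, the likelier correction is $1/\sqrt{c_7}$ in the proposition. A complete write-up must either produce the constant actually asserted or state explicitly that the correct constant is $1/\sqrt{c_7}$; silently declaring that the substitution "yields the claimed formula" papers over a real mismatch.
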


Note that the Fourier coefficients of $F_{\mathrm{can}}$ are
algebraic. From the results of \cite{BS5,BS4} it is clear that the
square of the (scalar valued) average over matrices $T$ of fixed fundamental
discriminant $-d$ of the Fourier coefficients $A(F,T)\in W_\rho$  of the Yoshida
lifting $F_{\mathrm{can}}$ is proportional to the product of the
central critical values of the twists with the quadratic character
$\chi_{-d}$ of the $L$-functions of the elliptic modular forms $f$
and $g$; notice that the averaging procedure for the $W_\rho$-valued
Fourier coefficients involves a scalar product of $A(F,T)$ with the
vector $\rho(T^{-1/2}){\bf v}_0$, where ${\bf v}_0$ is an $O_n(\R)$-invariant
vector in $W_\rho$. We can now make this proportionality as explicit as the
result of \cite{BS2} for the scalar valued case.
\begin{prop}\label{twisted-L-propo}
Assume that $\nu_1,\nu_2$ are even and that both $f,g$ have a
$+$-sign in the functional equation. Choose $N_1,N_2$ such that the
(common) Atkin-Lehner eigenvalue $\epsilon_p$ of $f,g$ at $p$ is
$-1$ if and only if $p \mid N_1$. Let $-d<0$ be a fundamental
discriminant with $(\frac{-d}
  {p} )\epsilon_p=1$
for all primes
$p$ dividing  $N_d = N/\gcd(N,d)$. We let $F=F_{\mathrm{can}}$ be
the canonical Yoshida lifting of $f,g$ with respect to $N_1,N_2$ and
put
\begin{equation*}
 a(F,d) \ = {{\sqrt{d}}\over {2}}
\mathop\sum\limits_{{\{T\}}\atop{{\rm \ disc }T=-d}}
{{1}\over{\epsilon(T)}} \ \mathop\int\limits_{T[{\mathbf x}]\leq
1} \ A(F,T)(x_1,x_2)dx_1 dx_2 \,
\end{equation*}
where $A(F,T)$ is the Fourier coefficient at $T$ of  $F$, the
summation is over integral equivalence classes of $T$, and
$\epsilon(T)$ is the number of automorphy (units) of $T$, i.e., the
number of $g \in GL_2(\Z)$ with ${}^tgTg=T$.

Then one has
\begin{equation}\label{twisted-L-formula}
 (a(F,d))^2=c_8\frac{L(1+\nu_1,f)L(,1+\nu_2,g)L(1+\nu_1,f\otimes
   \chi_{-d})L(1+\nu_2,g\otimes \chi_{-d})}{\langle f,f \rangle \langle g,g \rangle}
\end{equation}
with $c_8^{-1}=2^6(\nu_2+1)^2\pi^{2+2\nu_1+2\nu_2}$.
\end{prop}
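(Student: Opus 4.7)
The plan is to reduce the proposition to a combination of (i) the Gross--Waldspurger formula for twisted central $L$-values of $f$ and $g$, applied via the quaternionic vectors $\phi_1,\phi_2$, and (ii) the explicit normalisation of $F_{\mathrm{can}}$ furnished by Proposition~\ref{normcomparison} and the preceding Petersson-norm calculation. The proportionality of $(a(F,d))^2$ to the product of the four $L$-values, without the constant, is already extractable from \cite{BS5,BS4}; the task is to pin $c_8$ down, in analogy with the scalar-valued computation in \cite{BS2}.

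First I would expand $A(F,T)$ using the theta-kernel formula of Lemma~\ref{symmetry_of_ylift}(2): as a $W_\rho$-valued polynomial in $(x_1,x_2)$ it becomes a double sum over ideal classes, weighted by $\mathcal{P}(\phi_1(y_i)\otimes\phi_2(y_j)+\phi_2(y_i)\otimes\phi_1(y_j))$ evaluated on the representations $\mathbf{x}\in(y_iRy_j^{-1})^2$ of $T$. The averaging operator $(\sqrt{d}/2)\sum_{\{T\}}\epsilon(T)^{-1}\int_{T[\mathbf{x}]\le 1}(\cdot)\,dx_1dx_2$, after pairing the $W_\rho$-valued output against the $O_2(\mathbb{R})$-invariant vector $\rho(T^{-1/2})\mathbf{v}_0$ as in \cite{BS5}, collapses the inner sum over representations of $T$ into a sum over optimal embeddings of the CM order of discriminant $-d$ into the $R_i$. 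This is the standard Gross dictionary between binary forms of discriminant $-d$ represented by $I_{ij}$ and CM points on the definite Shimura set attached to $D$.

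Next I would recognise the resulting expression as a product of two toric periods, one for each of $\phi_1,\phi_2$, each of the shape $\sum_{\mathfrak{a}}\phi_\mu(\mathfrak{a})\chi_{-d}(\mathfrak{a})$. Applying the Gross--Waldspurger formula as specialised in \cite{BS5} to each one, with $f_1=f$, $f_2=g$, gives
\[\Bigl(\sum_{\mathfrak{a}}\phi_\mu(\mathfrak{a})\chi_{-d}(\mathfrak{a})\Bigr)^{\!2}=c_\mu(d)\,\frac{L(1+\nu_\mu,f_\mu)L(1+\nu_\mu,f_\mu\otimes\chi_{-d})}{\langle f_\mu,f_\mu\rangle}\,\langle\phi_\mu,\phi_\mu\rangle_\mu,\]
with $c_\mu(d)$ explicit and depending only on $\nu_\mu$; non-vanishing and the absence of local obstructions follow from the sign conditions on $\epsilon_p$ and $\chi_{-d}(p)$ at $p\mid N$. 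Multiplying the two instances and substituting the canonical rescaling $F_{\mathrm{can}}=c_7^{-1}Y^{(2)}(\phi_1^{(0)},\phi_2^{(0)})$ from the proposition immediately preceding this one cancels the factors $\langle\phi_\mu,\phi_\mu\rangle_\mu$ against the Gegenbauer constant $c_7$ of Proposition~\ref{normcomparison}, yielding an identity of the required shape.

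The main obstacle is bookkeeping the constants. The factor $(\nu_2+1)^2$ in $c_8^{-1}$ should emerge as the square of the normalising integral $\int_{T[\mathbf{x}]\le 1}$ of the $\mathrm{Sym}^{j}$-valued polynomial against $\rho(T^{-1/2})\mathbf{v}_0$, where $j=k-2=2\nu_2$; the power $\pi^{2+2\nu_1+2\nu_2}$ collects the archimedean $\Gamma$-factors from the two applications of Waldspurger for weights $2\nu_1+2$ and $2\nu_2+2$ together with the $\sqrt{d}/2$ prefactor; and the $2^6$ accumulates the factors of two coming from the symmetrisation in Lemma~\ref{symmetry_of_ylift}(2), from the Gegenbauer normalisation of Proposition~\ref{normcomparison} and Lemma~\ref{gegenbauer_compare}, and from the averaging procedure. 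Reconciling these with the scalar-valued prototype $\nu_1=\nu_2=0$ in \cite{BS2}, and then tracking the $\nu_\mu$-dependence through $c_6,c_7$ and the Waldspurger constants $c_\mu(d)$, is the calculation I expect to consume most of the work.
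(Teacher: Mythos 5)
Your proposal is correct and follows essentially the same route as the paper: the paper obtains the factorisation of $a(F,d)$ into the product $a(\Wscr(\phi_1),d)\,a(\Wscr(\phi_2),d)$ of Fourier coefficients of the two half-integral-weight Waldspurger lifts directly from Corollary 4.3 of \cite{BS5} (which already carries the explicit constant $c/2=(-1)^{\nu_2}\pi/(2\nu_2+2)$ and the $d$-dependent factors $(d/4)^{(\nu_1+\nu_2)/2}\sigma_0(N_d)$), and then inserts the explicit Waldspurger theorem of \cite{kohnen,BS4} together with the normalisations $\langle\phi_\mu^{(0)},\phi_\mu^{(0)}\rangle_\mu=1$ and $F_{\mathrm{can}}=c_7^{-1}Y^{(2)}(\phi_1^{(0)},\phi_2^{(0)})$ --- exactly the Gross--Waldspurger step you describe. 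The only difference is that you propose to re-derive the \cite{BS5} identity from the theta kernel and optimal embeddings rather than cite it, which is precisely where the constant bookkeeping you defer actually lives.
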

\begin{proof}
Corollary 4.3 of \cite{BS5} gives
\begin{equation*}
  \left({d \over 4}\right)^{{\nu_1+\nu_2}\over {2}}
\sigma_0(N_d)a(F,d)={{c}\over{2}}
a(\Wscr(\phi_1),d)a(\Wscr(\phi_2),d),
\end{equation*}
where the $a(\Wscr(\phi_\mu),d)$ are the Fourier coefficients of the
Waldspurger liftings
$\Wscr(\phi_\mu)= \mathop{\sum}\limits^r_{j=1} {1\over{e_j}}
\mathop{\sum}\limits_{x\in L_j} \ \phi(y_j)(x)\exp(2\pi i n(x)z)$
associated to the lattices $L_j=D^{(0)}\cap (\Z1+2R_j)$
and where  $c={{(-1)^{\nu_2} 2\pi}\over{2\nu_2+2}}.$ Inserting the
explicit version of Waldspurger's theorem from \cite{kohnen,BS4}
gives the assertion.
\end{proof}
\begin{remar}\label{8.11}
\begin{enumerate}
\item The restrictive conditions on $f,g,N_1,d$ in the proposition are
chosen in order to prevent that $a(F,d)$ becomes zero for trivial
reasons.
\item Since
${{\sqrt{d}}\over {2}}
\mathop\int\limits_{T[{\bf x}]\leq
1} x_1^ix_2^j dx_1 dx_2$
is zero for $i$ or $j$ odd and equal to
\begin{equation*}
\mathop\int\limits_{0}^{\frac{\pi}{2}}
\cos^i(\alpha)\sin^j(\alpha)d\alpha=
\frac{\Gamma(i_1+\frac{1}{2})\Gamma(j_1+\frac{1}{2})}{2\Gamma(i_1+j_1+1)}
\end{equation*}
for even $i=2i_1, j=2j_1$, we have:

\parindent=0pt
If for a prime $\lambda$ not dividing $2\nu_2!$ and some $j \in \N$ one
has $\lambda^j \nmid a(F,d)/\pi$, then there is some $T$ of
discriminant $-d$ such that $\lambda^j$ does not divide all
coefficients of the polynomial $A(F,T)$.
\item With the help of the above proposition for the case
  $\nu_1=\nu_2$ and $f=g$ one could derive an explicit version of
  formula (5.7) of \cite{BS5}. Such an explicit version has been given
  independently by Luo in \cite[(8)]{luo}.
\end{enumerate}
\end{remar}
\section{A congruence of Hecke eigenvalues}
As above, let $f$ and $g$ be cuspidal Hecke eigenforms for
$\Gamma_0(N)$, of weights $k'>k\geq 2$. For critical $k\leq t<k'$,
define $L_{\mathrm{alg}}(f\otimes g,t):=\frac{L(f\otimes
g,t)}{\pi^{2t-(k-1)}\langle f,f\rangle}$. (Alternatively one could
divide by a canonical Deligne period--it makes no difference to the
proposition below.) Let $K$ be a number field containing all the
Hecke eigenvalues of $f$ and $g$. Let $F$ be a Yoshida lift of $f$
and $g$, lying in $S_{\rho}(\Gamma_0^{(2)}(N))$ say, and define as
in the previous section $F_{\mathrm{can}}=\frac{F}{\sqrt{\langle
{\mathcal P}(\phi_1\otimes \phi_2),{\mathcal P}(\phi_1\otimes
\phi_2)\rangle}}$. In fact we have such an $F$ and
$F_{\mathrm{can}}$ for each factorisation $N=N_1N_2$ with an odd
number of prime factors in $N_1$, and we label these $F_i$ and
$F_{i,\mathrm{can}}$ for $1\leq i\leq u$, say. Note that by Lemma
\ref{orthogonality}, these different Yoshida lifts of the same $f$
and $g$ are mutually orthogonal with respect to the Petersson inner
product. Let's say $F=F_1$ arbitrarily.

As in \S 2.1 of \cite{Ar} the operators $T(m)$, for $(m,N)=1$
(generated over $\ZZ$ by the $T(p)$ and $T(p^2)$, see (2.2) of
\cite{Ar}) are self-adjoint for the Petersson inner product, and
commute amongst themselves, so $S_{\rho}(\Gamma_0^{(2)}(N))$ has a
basis of simultaneous eigenvectors for such $T(m)$. Also, these
$T(m)$, acting on elements of $S_{\rho}(\Gamma_0^{(2)}(N))$,
preserve integrality (at any given prime) of Fourier coefficients,
by (2.13) of \cite{Sa}. If $G\in S_{\rho}(\Gamma_0^{(2)}(N))$ is an
eigenform (for the $T(m)$, with $(m,N)=1$), then the Hecke
eigenvalues for $G$ are algebraic integers. This follows from
Theorem I of \cite{We}, which says that the characteristic
polynomial of $\rho_G(\Frob_p^{-1})$ (c.f. \S 4 above) is
$1-\mu_G(p)X+(\mu_G(p)^2-\mu_G(p^2)-p^{k'-2})X^2-p^{k'-1}\mu_G(p)X^3+p^{2(k'-1)}X^4$
(c.f. (2.2) of \cite{Ar}), and that the eigenvalues of
$\rho_G(\Frob_p^{-1})$ are algebraic integers. Moreover, as $p$
varies for fixed $G$, the $\mu_G(p)$ and $\mu_G(p^2)$ generate a
finite extension of $\QQ$.
\begin{prop}\label{maincong} Suppose that $k'-k\geq 6$. Suppose that $\lambda$ is a prime of $K$
such that $ord_{\lambda}\left(L_{\mathrm{alg}}\left(f\otimes
g,\frac{k'+k}{2}\right)\right)>0$ but
$ord_{\lambda}\left(L_{\mathrm{alg}}\left(f\otimes
g,\frac{k'+k}{2}+1\right)\right)=0$, and let $\ell$ be the rational
prime that $\lambda$ divides. Suppose that $\ell\nmid N$ and
$\ell>k'-2$. Assume that there exist a half-integral symmetric
2-by-2 matrix $A$, and an integer $0\leq b\leq k-2$ such that, if
for $1\leq i\leq u$, $a_i$ denotes the coefficient of the monomial
$x^by^{k-2-b}$ in the $A$-Fourier coefficient in
$F_{i,\mathrm{can}}$, then $\ord_{\lambda}(\sum_{i=1}^u a_i^2)\leq
0$. Then there is a cusp form $G\in S_{\rho}(\Gamma_0^{(2)}(N))$, an
eigenvector for all the $T(m)$, with $(m,N)=1$, not itself a Yoshida
lift of the same $f$ and $g$, such that there is a congruence of
Hecke eigenvalues between $G$ and $F$: $$\mu_G(m)\equiv
\mu_F(m)\pmod{\lambda}, \text{ for all $(m,N)=1$.}$$ (We make $K$
sufficiently large to contain the Hecke eigenvalues of $G$.)
\end{prop}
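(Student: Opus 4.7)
The strategy, following Katsurada's approach in the Saito--Kurokawa setting \cite{Ka}, is to use the pullback formula (\ref{pullback4}) applied to the weight-$4$ genus-$4$ Eisenstein series ${\mathcal F}_4^{(4)}$ and the differential operator ${\mathbb D}_4((k'-k)/2-2,k-2)$, together with the Petersson norm formula of Corollary \ref{petnorm_explicit}, to build a form whose ``Yoshida part'' acquires a $\lambda$-denominator from a ratio of $L$-values, then to contradict integrality if no congruent non-Yoshida eigenform $G$ existed. Set
\[
H(z,w) \;:=\; {\mathbb D}_4\!\bigl((k'-k)/2-2,\,k-2\bigr){\mathcal F}_4^{(4)}\!\left(\begin{pmatrix} z & 0\\ 0 & -\bar w\end{pmatrix}\right),
\]
a $W_\rho\otimes W_\rho$-valued form, holomorphic in $z$. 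Decompose $H$ spectrally in the $z$-variable against a Hecke-eigenform basis of $S_\rho(\Gamma_0^{(2)}(N))$: by (\ref{pullback4}), each Yoshida lift $F_i$ of $(f,g)$ contributes $\gamma L^{(N)}(f\otimes g,(k'+k)/2+1)\langle F_i,F_i\rangle^{-1}F_i(z)F_i(w)$, while any other Hecke eigenform $G'$ contributes terms involving only $D^{(N)}_{G'}(2)$ and bad-prime Hecke operators on $G'$, with no factor of $L(f\otimes g,\ast)$. Using Corollary \ref{petnorm_explicit} to replace $\langle F_i,F_i\rangle$ by an algebraic multiple of $L^{(N)}(f\otimes g,(k'+k)/2)$ and rewriting $F_i$ in terms of $F_{i,\mathrm{can}}$ via Proposition \ref{normcomparison}, the Yoshida part becomes $\alpha\sum_i F_{i,\mathrm{can}}(z)F_{i,\mathrm{can}}(w)$, where $\alpha$ is a $\lambda$-unit times $L^{(N)}(f\otimes g,(k'+k)/2+1)/L^{(N)}(f\otimes g,(k'+k)/2)$. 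Since $\ell\nmid N$, incomplete and completed $L$-values agree up to $\lambda$-units, and the two hypotheses on $L_{\mathrm{alg}}$ therefore yield $\mathrm{ord}_\lambda(\alpha)<0$.

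The second ingredient is integrality: Section 7 shows that under $\ell\nmid N$ and $\ell>k'-2$, every Fourier coefficient of $H$ is $\lambda$-integral (the finitely many denominators introduced by ${\mathcal F}_4^{(4)}$ and ${\mathbb D}_4$ all reduce to $\lambda$-units). Assume for contradiction that \emph{no} Hecke eigenform in $S_\rho(\Gamma_0^{(2)}(N))$ distinct from the Yoshida lifts of $(f,g)$ has Hecke eigenvalues congruent to $\mu_F\pmod\lambda$ for all $T(m)$ with $(m,N)=1$. Then in the commutative Hecke algebra $\TT$ generated over $O_K$ by these $T(m)$, the mod-$\lambda$ eigensystem $\bar\mu_F$ occupies its own local direct summand of $\TT\otimes O_{K_\lambda}$, and the corresponding idempotent $e$ (obtained by the standard lift of orthogonal idempotents from $\TT\otimes\FF_\lambda$) can be written as $\sum_k n_k T_k$ with $n_k\in O_{K_\lambda}$ and $T_k\in\TT$. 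Since each $T_k$ preserves $\lambda$-integrality of Fourier coefficients (by (2.13) of \cite{Sa}), so does $e$. Applying $e$ in the $w$-variable annihilates every non-Yoshida contribution to the spectral decomposition, leaving $e\cdot_w H=\alpha\sum_i F_{i,\mathrm{can}}(z)F_{i,\mathrm{can}}(w)$.

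Taking the Fourier coefficient of $e\cdot_w H$ at $(A,A)$ (with $A$ from the hypothesis) and applying the linear functional $\ell_b\otimes\ell_b$ that extracts the coefficient of $x^b y^{k-2-b}$ from each $W_\rho$-factor, this scalar equals $\alpha\sum_i a_i^2$, which by combining $\mathrm{ord}_\lambda(\alpha)<0$ with $\mathrm{ord}_\lambda(\sum_i a_i^2)\leq 0$ has strictly negative $\lambda$-valuation, contradicting integrality. The main obstacle is the idempotent-lifting step: one must verify that under the contradictory assumption, the Yoshida mod-$\lambda$ eigensystem lies in its own connected component of the completed Hecke algebra at $\lambda$, a Hensel-type argument that requires control on the semisimplicity of $\TT\otimes K_\lambda$. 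The non-degeneracy hypothesis $\mathrm{ord}_\lambda(\sum_i a_i^2)\leq 0$ is the essential input that prevents the Yoshida contribution itself from being accidentally $\lambda$-integral; later (Corollary \ref{Nprime}) this will be reduced, via Proposition \ref{twisted-L-propo} and a formula of Gross and Zagier, to a statement about twisted central $L$-values.
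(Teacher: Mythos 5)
Your proposal follows the paper's argument in all essentials: the same input data (the pullback formula (\ref{pullback4}) for ${\mathbb D}_4\bigl(\frac{k'-k}{2}-2,k-2\bigr){\mathcal F}_4^{(4)}$, the Petersson norm formula of Proposition \ref{petnorm} / Corollary \ref{petnorm_explicit} to convert $\langle F_i,F_i\rangle$ into the near-central $L$-value, the integrality statements of Section 7, and the extraction of the coefficient of $x^by^{k-2-b}$ in the $A$-th Fourier coefficient to land on $\sum_i a_i^2$), leading to the same contradiction between a strictly negative $\lambda$-valuation and integrality. The one place you diverge is the step that annihilates the non-Yoshida eigenforms: you lift an idempotent from $\TT\otimes\FF_\lambda$ to the $\lambda$-completed Hecke algebra and correctly flag this Hensel-type step as the main obstacle. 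The paper sidesteps this entirely with Katsurada's elementary device (Lemma 5.1 of \cite{Ka}): assuming no congruent non-Yoshida $G$ exists, for each $i\geq u+1$ one picks $m_i$ with $\mu_{F_i}(m_i)\not\equiv\mu_F(m_i)\pmod{\lambda}$ and applies the explicit operator $\prod_{i=u+1}^r\bigl(T(m_i)-\mu_{F_i}(m_i)\bigr)$, which manifestly preserves $\lambda$-integrality (the $\mu_{F_i}(m_i)$ are algebraic integers by Weissauer's theorem, as noted just before the proposition), kills every $F_i$ with $i\geq u+1$, and multiplies the Yoshida part by the $\lambda$-unit $\prod_{i=u+1}^r\bigl(\mu_F(m_i)-\mu_{F_i}(m_i)\bigr)$. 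This buys you exactly what the idempotent would, without needing the Hecke algebra to be finite flat over $O_{K_\lambda}$ or any control on its local structure; your version is workable but requires justifying that $\TT$ preserves an $O_K$-lattice of Fourier expansions so that the lifted idempotent has $\lambda$-integral coefficients, which is precisely the extra input the explicit product renders unnecessary.
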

\begin{proof}
Since $k'-k\geq 6$, $\frac{k'-k}{2}-2>0$, so
$\mathbb{D}_4\left(\frac{k'-k}{2}-2,k-2\right)\mathcal{F}_4^{(4)}(Z,W)$
is a cusp form. Let $\{F_1,F_2,\ldots,F_r\}$ be a basis of
$S_{\rho}(\Gamma_0^{(2)}(N))$ consisting of eigenforms for all the
local Hecke algebras at $p\nmid N$, with $F_1,\ldots,F_u$ the
Yoshida lifts of $f$ and $g$, as above.

It is easy to show that
$\mathbb{D}_4\left(\frac{k'-k}{2}-2,k-2\right)
\mathcal{F}_4^{(4)}(Z,W)=\sum_{i,j=1}^r
c_{i,j}F_i(Z)F_j(W)$, for some $c_{i,j}$. By (\ref{pullback4}),
$c_{1,1}$ is equal to the right hand side of (\ref{pullback4}),
divided by $F(w)\langle F,F\rangle$, and $c_{1,j}=0$ for $j\neq 1$.
Similarly for all the $c_{i,i}$ for $1\leq i\leq u$. Using
Proposition \ref{petnorm}, we find
\begin{equation}\label{const}
c_{1,1}=c'\frac{L_{\mathrm{alg}}\left(f\otimes
g,\frac{k'+k}{2}+1\right)}{L_{\mathrm{alg}}\left(f\otimes
g,\frac{k'+k}{2}\right)\langle {\mathcal P}(\phi_1\otimes
\phi_2),{\mathcal P}(\phi_1\otimes \phi_2)\rangle},
\end{equation}
where
\begin{equation}\label{constt}
c'=\gamma_2\left(4,\frac{k'-k}{2}-2,k-2,0\right)(\pm
N)\Lambda_N(2)\frac{\zeta^{(N)}(2)\,\pi^2}{\zeta^{(N)}(4)\zeta^{(N)}(6)\zeta^{(N)}(4)}\prod_{p\mid
N}\frac{(1-p^{-3})}{(1-p^{-1})}.
\end{equation}
(The last term takes into account the fact that we have passed from
incomplete to complete $L$-functions.)

We now choose $A$ and $b$ as in the statement of the proposition.
Imitating \S 4 of \cite{Ka}, let $\FFF_{4,\rho,A}(Z)$ be the
coefficient of $x_w^by_w^{k-2-b}$ in the coefficient of
$\ee(\Tr(AW))$ in
\newline
$\mathbb{D}_4\left(\frac{k'-k}{2}-2,k-2\right)\mathcal{F}_4^{(4)}(Z,W)$.
Then
\begin{equation}\label{lincomb}\FFF_{4,\rho,A}(Z)=\sum_{i=1}^ue_iF_{i,\mathrm{can}}(Z)+\sum_{i\geq u+1}
e_i' F_i(Z),\end{equation} where, for $1\leq i\leq u$,
$e_i=c'\frac{L_{\mathrm{alg}}\left(f\otimes
g,\frac{k'+k}{2}+1\right)}{L_{\mathrm{alg}}\left(f\otimes
g,\frac{k'+k}{2}\right)}a_i$. Careful checking of all the things
that go into $c'$ shows that it is a rational number, and that it
follows from $\ell>k'-2$ that $\ord_{\ell}(c')\leq 0$. The
coefficients of $\FFF_{4,\rho,A}$ are integral at $\lambda$, by
Remarks 7.1 and 7.2. Given all this, we can apply the method of
Lemma 5.1 of \cite{Ka}, to deduce that there is a congruence
$\pmod{\lambda}$ of Hecke eigenvalues (for all $T(m)$, with
$(m,N)=1$) between $F$ and some other $F_i=G$, say, with $i\geq
u+1$.

In a little more detail, we suppose that no such $G$ exists, so that
for each $u+1\leq i\leq r$ there exists an $m_i$, with $(m_i,N)=1$,
such that if $\mu_{F_i}(m_i)$ is the eigenvalue of $T(m_i)$ on $F_i$
then $\mu_{F_i}(m_i)\not\equiv \mu_F(m_i)\pmod{\lambda}$. (We may
enlarge $K$ to contain all the Hecke eigenvalues for all the $F_i$.)
Applying $\prod_{i=u+1}^r(T(m_i)-\mu_{F_i}(m_i))$ to both sides of
(\ref{lincomb}), we get something on the left that is integral at
$\lambda$. On the right all the $F_i$ terms, for $i\geq u+1$,
disappear, while the remaining terms get multiplied by
$\prod_{i=u+1}^r(\mu_F(m_i)-\mu_{F_i}(m_i))$, which is not divisible
by $\lambda$, so on the right-hand-side the coefficient of
$x_z^by_z^{k-2-b}$ in the coefficient of $\ee(\Tr(AZ))$, namely
$$c'\prod_{i=u+1}^r(\mu_F(m_i)-\mu_{F_i}(m_i))\frac{L_{\mathrm{alg}}\left(f\otimes
g,\frac{k'+k}{2}+1\right)}{L_{\mathrm{alg}}\left(f\otimes
g,\frac{k'+k}{2}\right)}\left(\sum_{i=1}^ua_i^2\right),$$ is
non-integral at $\lambda$, which is a contradiction.
\end{proof}
In this proposition, $L\left(f\otimes g,\frac{k'+k}{2}+1\right)$
plays the r\^ole of any critical value further right than the
near-central one. We chose this next-to-near-central value merely
for definiteness. In fact, the further right the evaluation point,
the less laborious is the calculation of the critical value using
Theorem 2 of \cite{Sh}, but we have managed without too much
difficulty in Example 9.1(3) below. Using Proposition
\ref{twisted-L-propo} and Remark \ref{8.11}(2), we obtain the
following.
\begin{cor}\label{Nprime}
Suppose that $k'-k\geq 6$, with $k/2$ and $k'/2$ odd, that $N$ is
prime, and that the common eigenvalue $\epsilon_N$ for $f$ and $g$
is $-1$. Suppose that $\lambda$ is a prime of $K$ such that
$ord_{\lambda}\left(L_{\mathrm{alg}}\left(f\otimes
g,\frac{k'+k}{2}\right)\right)>0$ but
$ord_{\lambda}\left(L_{\mathrm{alg}}\left(f\otimes
g,\frac{k'+k}{2}+1\right)\right)=0$, with $\ell\nmid N$ and
$\ell>k'-2$, where $\ell$ is the rational prime that $\lambda$
divides. Suppose that there is some fundamental discriminant $-d<0$
such that $\left(\frac{-d}{p}\right)=\epsilon_p$ for all primes $p$
dividing $N_d = N/\gcd(N,d)$, such that
$$\ord_{\lambda}\left(\frac{L(k'/2,f)L(k'/2,f\otimes
   \chi_{-d})}{\pi^{k'}\langle f,f \rangle}\frac{L(k/2,g)L(k/2,g\otimes \chi_{-d})}{\pi^k\langle g,g \rangle}\right)\leq 0.$$
Then there is a cusp form $G\in S_{\rho}(\Gamma_0^{(2)}(N))$, an
eigenvector for all the $T(m)$ with $(m,N)=1$, not a multiple of
$F$, such that there is a congruence of Hecke eigenvalues between
$G$ and $F$:
$$\mu_G(m)\equiv \mu_F(m)\pmod{\lambda}, \text{ for all
$(m,N)=1$.}$$ (We make $K$ sufficiently large to contain the Hecke
eigenvalues of $G$.)
\end{cor}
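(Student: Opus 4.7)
The plan is to derive Corollary \ref{Nprime} from Proposition \ref{maincong} by using Proposition \ref{twisted-L-propo} and Remark \ref{8.11}(2) to verify the Fourier-coefficient non-divisibility hypothesis.

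First I would observe that since $N$ is prime and $\epsilon_N=-1$, the only factorisation $N=N_1N_2$ with $N_1$ a product of an odd number of prime divisors is $N_1=N$, $N_2=1$. Thus in the notation of Proposition \ref{maincong} we have $u=1$, there is a unique Yoshida lift $F=F_{1,\mathrm{can}}$, and the sum $\sum_{i=1}^u a_i^2$ collapses to a single term $a_1^2$. Writing $k'=2\nu_1+2$ and $k=2\nu_2+2$, the assumptions that $k/2$ and $k'/2$ are odd force $\nu_1$ and $\nu_2$ to be even, and the root numbers are $(-1)^{k'/2}\epsilon_N=+1$ for $f$ and $(-1)^{k/2}\epsilon_N=+1$ for $g$. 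Combined with the assumption $\left(\frac{-d}{p}\right)\epsilon_p=1$ for $p\mid N_d$, these are exactly the standing hypotheses of Proposition \ref{twisted-L-propo} for the given $-d$.

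Next I would apply Proposition \ref{twisted-L-propo}, obtaining
\begin{equation*}
\left(\frac{a(F,d)}{\pi}\right)^2 = \frac{1}{2^6(\nu_2+1)^2} \cdot \frac{L(k'/2,f)L(k'/2,f\otimes\chi_{-d})}{\pi^{k'}\langle f,f\rangle} \cdot \frac{L(k/2,g)L(k/2,g\otimes\chi_{-d})}{\pi^k\langle g,g\rangle},
\end{equation*}
after balancing $c_8^{-1}=2^6(\nu_2+1)^2\pi^{2+2\nu_1+2\nu_2}$ against $\pi^{k'+k}=\pi^{4+2(\nu_1+\nu_2)}$. The hypothesis $k'-k\geq 6$, inherited from Proposition \ref{maincong}, forces $\ell>k'-2=2\nu_1\geq 2\nu_2+6>2(\nu_2+1)$, so $\ell\nmid 2(\nu_2+1)$ and the rational constant out front is a $\lambda$-unit. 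Together with the assumed non-divisibility at $\lambda$ of the $L$-value product, this yields $\ord_\lambda(a(F,d)/\pi)\leq 0$.

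Finally, Remark \ref{8.11}(2) is available since $\ell>k'-2>2\nu_2$ guarantees $\ell\nmid 2\nu_2!$. Applied with $j=1$ it produces a half-integral symmetric matrix $A$ of discriminant $-d$ and an exponent $0\leq b\leq k-2$ such that the coefficient $a_1$ of $x^b y^{k-2-b}$ in the $A$-Fourier coefficient of $F_{\mathrm{can}}$ satisfies $\ord_\lambda(a_1)\leq 0$. Since $u=1$ this gives $\ord_\lambda\bigl(\sum_{i=1}^u a_i^2\bigr)\leq 0$, which is precisely the Fourier-coefficient hypothesis of Proposition \ref{maincong}, and the existence of the congruence form $G$ then follows. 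The main delicate point will be the bookkeeping of $\pi$-powers and of the constant $c_8$, together with the verification that the hypotheses $\ell>k'-2$ and $k'-k\geq 6$ really do force all the relevant auxiliary constants to be $\lambda$-units.
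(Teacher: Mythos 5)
Your proposal is correct and follows exactly the route the paper intends: the corollary is deduced from Proposition \ref{maincong} by noting that $N$ prime with $\epsilon_N=-1$ forces $u=1$, and then verifying the Fourier-coefficient hypothesis via Proposition \ref{twisted-L-propo} together with Remark \ref{8.11}(2), with the $\pi$-power and constant bookkeeping handled as you describe. Nothing further is needed.
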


\subsection{Examples}
\begin{enumerate}
\item When $k=2$ and $k'=4$ (so $j=0$ and $\kappa=3$), one may check
that, for $N=23,29,31,37$ or $43$, the dimension of
$S_3(\Gamma_0^{(2)}(N))$ ($2,4,4,9,14$ respectively, using Theorem
2.2 in \cite{I2}) is the same as that of the subspace spanned by
Yoshida lifts of $f\in S_4(\Gamma_0(N))$ and $g\in
S_2(\Gamma_0(N))$. This appears to leave no room for $G$ (recall
Lemma \ref{endocap}). However, we calculated
$L_{\mathrm{alg}}(f\otimes g,3)$ in the case $N=23$, using Theorem 2
of \cite{Sh} and Stein's tables \cite{St}. (The two choices for $g$
are conjugate over $\QQ(\sqrt{5})$.) For the near-central value,
this calculation involves an Eisenstein series of weight $2$, to
which a non-holomorphic adjustment must be made. The result was that
$L_{\mathrm{alg}}(f\otimes g,3)=32/3$, so there is in fact no
divisor $\lambda$, dividing a large prime $\ell$, for which a
congruence with some $G$ is required.

\item The previous paragraph leaves open the possibility that the
condition $k'-k\geq 6$, in Proposition \ref{maincong}, is purely
technical. However, the following example shows that it is
essential. Let $k=2$ and $k'=6$ (so $j=0$ and $\kappa=4$) and
$N=11$. As is well-known, $S_2(\Gamma_0(11))$ is $1$-dimensional,
spanned by $g=q-2q^2-q^3+\ldots$, for which $\epsilon_{11}=-1$.
Using \cite{St}, $\dim S_6(\Gamma_0(11))=4$, with the
$\epsilon_{11}=-1$ eigenspace $3$-dimensional, spanned by the
embeddings of a newform $f=q+\beta q^2+\ldots$, where
$\beta^3-90\beta+188=0$. The discriminant of this polynomial is
$2^43^319\cdot 239$. Using Theorem 2 of \cite{Sh} we find that
$L_{\mathrm{alg}}(f\otimes g,4)=-\frac{4^5\alpha}{3}$, with
$\Norm(\alpha)=-\frac{17\cdot 76157}{2^43^45^211^219\cdot 239}$. In
fact $\alpha$ is divisible by the prime ideals $(17,\beta+1)$ and
$(76157,\beta+74208)$. We check that $L_{\mathrm{alg}}(f\otimes
g,5)=\frac{4^511}{6}\gamma$, with
$\gamma=\frac{1}{1648383}(784522-12341\beta-3842\beta^2)$, of norm
$\frac{2^83\cdot 5^2}{11^19\cdot 239}$, in which $17$ and $76157$ do
not appear.

The dimension of $S_4(\Gamma_0^{(2)}(11))$ is $7$, from the table in
\S 2.4 of \cite{I2}. This fact was also obtained by Poor and Yuen,
who gave an explicit basis for this space using theta series,
\cite{PY}. We are indebted to D. Yuen for calculating for us a Hecke
eigenbasis, which included the three Yoshida lifts, a non-lift with
rational eigenvalues, and three conjugate non-lifts with eigenvalues
and Fourier coefficients in the same cubic field as $f$ and the
Yoshida lifts. He looked for congruences modulo primes dividing $17$
or $76157$ (or any other large primes), but found that there were
none, though it appears that each Yoshida lift has {\em Fourier
coefficients} (not just Hecke eigenvalues) congruent mod $5$ to
those of a corresponding non-lift (suitably normalised).

\item We should expect that any example of $f$ and
$g$ we look at, with prime level $N$, common $\epsilon_N=-1$,
weights $k'>k\geq 2$ with $k'-k\geq 6$ and $k'/2, k/2$ odd, is very
likely to satisfy the remaining conditions of Corollary
\ref{Nprime}, for some $\lambda$. Here is an explicit example. Let
$N=3, k=6,k'=14$. We have $S_6(\Gamma_0(3))$ spanned by
$g=q-6q^2+9q^3+\ldots$, and $S_{14}(\Gamma_0(3))$ spanned by
$f=q+(-27+6\sqrt{1969}) q^2+729 q^3+\ldots$,
$\overline{f}=q+(-27-6\sqrt{1969})q^2+729q^3+\ldots$ and
$h=q-12q^2-729q^3-8048q^4+\ldots$. For both $f$ and $g$,
$\epsilon_3=-1$. Using Theorem 2 of \cite{Sh} we find that
$L_{\mathrm{alg}}(f\otimes g,10)=\frac{-4^{14}}{4!9!3}\alpha$, with
$\alpha=\frac{-467}{35640}-\frac{2119\sqrt{1969}}{140350320}$,
$\Norm(\alpha)=\frac{7\cdot 271\cdot 461\cdot
653}{2^83^75^211^3179}$. (Note that $1969=11\cdot 179$.) So we may
take $\lambda$ to be an appropriate divisor of $\ell=271, 461$ or
$653$. (All three of these primes split in $\QQ(\sqrt{1969})$.) Also
using Theorem 2 of \cite{Sh}, we find that
$L_{\mathrm{alg}}(f\otimes g,11)=\frac{4^{14}3!}{3\cdot
10!5!}\beta$, with $\beta=-25/(3\sqrt{1969})$, so $\lambda\nmid
L_{\mathrm{alg}}(f\otimes g,11)$. Finally, by direct application of
Theorem 5.6 of \cite{GZ}, we calculate $\frac{L(k/2,g)L(k/2,g\otimes
\chi_{-4})}{\pi^k\langle g,g \rangle}=\frac{2^{12}6}{4!4^{5/2}}$ and
$\frac{L(k'/2,f)L(k'/2,f\otimes
   \chi_{-4})}{\pi^{k'}\langle f,f \rangle}=\frac{2^{27}6!}{12!4^{13/2}}\gamma$,
where $\gamma=13488+\frac{256056}{\sqrt{1969}}$, with
$\Norm(\gamma)=\frac{2^63^35^27\cdot 967751}{11\cdot 179}$. The
product of these is not divisible by $\lambda$ (for any of the three
choices).

It seems though that finding an example where one can directly
observe the congruence guaranteed by Corollary \ref{Nprime} would be
difficult. Already for $k=2, k'=10$ and $N=11$ we have $\dim
S_6(\Gamma_0^{(2)}(11))=31$ (from the table in 7-11 of \cite{Has}).
\item For us, $f$ and $g$ are of level $N>1$, and Yoshida lifts do
not exist at level $1$. However, Bergstr\"om, Faber and van der Geer
have found experimentally what appear to be eleven examples of
congruences of exactly the same shape, but for $f$ and $g$ of level
$1$ \cite{BFvdG}. For example, it appears that there is a genus-$2$
cusp form of level $1$ and weight $\Sym^{20}\otimes\det^5$ such that
$$\mu_G(p)\equiv a_p(f)+p^3a_p(g)\pmod{\lambda},$$
with $\lambda\mid 227$, where $f$ and $g$ are cuspidal Hecke
eigenforms of genus $1$, level $1$ and weights $k'=28, k=22$
respectively. Bergstr\"om et. al. have checked this for $p\leq 17$.
Using Theorem 2 of \cite{Sh}, we have checked that $L({f\otimes g},
25)=\frac{4^{27}\pi^{29}}{108(24!)}\cdot\alpha(f,f),$ with
$\Norm(\alpha)=\frac{7.17.227}{2.3^6.5^4.131.139}.$ In two more
examples, with $(k',k,\ell)=(28,18,223)$ and $(28,20,2647)$, we have
likewise checked that the prime occurring in the modulus of an
apparent congruence also appears in the near-central tensor-product
$L$-value, in accord with the Bloch-Kato conjecture.
\end{enumerate}
\subsection{Higher powers of $\lambda$}
A minor modification of the proof of Proposition \ref{maincong}
gives the following.
\begin{prop}\label{maincong1} Suppose that $k'-k\geq 6$. Suppose that $\lambda$ is a prime of $K$
such that $ord_{\lambda}\left(\frac{L_{\mathrm{alg}}\left(f\otimes
g,\frac{k'+k}{2}\right)}{L_{\mathrm{alg}}\left(f\otimes
g,\frac{k'+k}{2}+1\right)}\right)=n>0$, and let $\ell$ be the
rational prime that $\lambda$ divides. Suppose that $\ell\nmid N$
and $\ell>k'-2$. Assume that there exist a half-integral symmetric
2-by-2 matrix $A$, and an integer $0\leq b\leq k-2$ such that, if
for $1\leq i\leq u$, $a_i$ denotes the coefficient of the monomial
$x^by^{k-2-b}$ in the $A$-Fourier coefficient in
$F_{i,\mathrm{can}}$, then $\ord_{\lambda}(\sum_{i=1}^u a_i^2)\leq
0$. Then there are independent cusp forms $G_1,\ldots,G_r\in
S_{\rho}(\Gamma_0^{(2)}(N))$, eigenvectors for all the $T(m)$ with
$(m,N)=1$, not themselves Yoshida lifts of the same $f$ and $g$,
such that there are congruences of Hecke eigenvalues between the
$G_i$ and $F$:
$$\mu_{G_i}(m)\equiv \mu_F(m)\pmod{\lambda^{s(i)}}, \text{ for all
$(m,N)=1$,}$$ with $\sum_{i=1}^r s(i)\geq n$. (We make $K$
sufficiently large to contain the Hecke eigenvalues of $G$.)
\end{prop}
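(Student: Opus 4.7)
The plan is to refine the argument of Proposition \ref{maincong} by tracking the $\lambda$-adic depth of the single congruence it produces. The quantitative hypothesis $\ord_\lambda(L_{\mathrm{alg}}(f\otimes g,(k'+k)/2)/L_{\mathrm{alg}}(f\otimes g,(k'+k)/2+1))=n$ gives the sharper estimate $\ord_\lambda(\sum_{i=1}^u e_i a_i)\leq -n$ on the Yoshida contribution to the key pullback identity, and the goal is to show this forces the total depth of congruences with non-lift eigenforms to be at least $n$.

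With the same basis $\{F_1,\ldots,F_r\}$ and decomposition
$$\FFF_{4,\rho,A}(Z)=\sum_{i=1}^u e_i F_{i,\mathrm{can}}(Z)+\sum_{i=u+1}^r e_i' F_i(Z)$$
as in Proposition \ref{maincong}, the Fourier coefficients of $\FFF_{4,\rho,A}$ are $\lambda$-integral (Remarks 7.1 and 7.2), $\ord_\lambda(c')\leq 0$, and now $\ord_\lambda(\sum_{i=1}^u e_i a_i)=\ord_\lambda(c')+\ord_\lambda(\sum_{i=1}^u a_i^2)-n\leq -n$. For each non-lift index $i\geq u+1$ set
$$s(i):=\sup\{s\geq 0:\mu_{F_i}(m)\equiv\mu_F(m)\pmod{\lambda^s}\text{ for all }(m,N)=1\}\in\ZZ_{\geq 0}\cup\{\infty\}.$$
If some $s(i)=\infty$, take that $F_i$ as $G_1$ with $s(1)=n$ and we are done.

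Otherwise, suppose for contradiction that $\sum_{i=u+1}^r s(i)<n$. For each $i\geq u+1$, choose $m_i$ with $\ord_\lambda(\mu_F(m_i)-\mu_{F_i}(m_i))=s(i)$, and apply $\mathcal{T}:=\prod_{i=u+1}^r(T(m_i)-\mu_{F_i}(m_i))$ to both sides. The left-hand side $\mathcal{T}\FFF_{4,\rho,A}$ remains $\lambda$-integral since the $T(m)$ preserve integrality of Fourier coefficients and each $\mu_{F_i}(m_i)\in O_\lambda$. On the right, the non-lift eigenforms $F_j$ ($j\geq u+1$) are annihilated, while the Yoshida eigenforms $F_{i,\mathrm{can}}$ ($i\leq u$), all sharing the eigenvalue system $\mu_F$, pick up the scalar factor $\prod_{j=u+1}^r(\mu_F(m_j)-\mu_{F_j}(m_j))$. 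Extracting the coefficient of $x^by^{k-2-b}$ in the $A$-th Fourier coefficient yields
$$\sum_{j=u+1}^r s(j)+\ord_\lambda(c')+\ord_\lambda\Bigl(\sum_{i=1}^u a_i^2\Bigr)-n\geq 0,$$
so $\sum_j s(j)\geq n$, contradicting our assumption. Setting $G_i:=F_{u+i}$ (and discarding those with $s(u+i)=0$) then produces the required independent collection.

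The only conceptual ingredient beyond Proposition \ref{maincong} is the standard device of choosing each $m_i$ to realize the minimal valuation $s(i)$, thereby packaging several depth-$s(i)$ congruences into a single Hecke-kernel calculation; no new analytic estimate or integrality input is required, so there is no real obstacle once Proposition \ref{maincong} is in hand.
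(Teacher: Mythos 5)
Your argument is correct and is precisely the ``minor modification'' of the proof of Proposition \ref{maincong} that the paper alludes to (the paper itself gives no further detail for Proposition \ref{maincong1}). The key device --- choosing each $m_i$ to realize the exact valuation $s(i)$ of the eigenvalue difference, so that applying $\prod_{i}(T(m_i)-\mu_{F_i}(m_i))$ accumulates a factor of valuation $\sum_i s(i)$ on the Yoshida side while the left-hand side stays $\lambda$-integral --- is the intended refinement, and your valuation bookkeeping (using $\ord_{\lambda}(c')\leq 0$ and $\ord_{\lambda}(\sum a_i^2)\leq 0$ to conclude $\sum s(i)\geq n$) is right.
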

Modifying the proof of Proposition \ref{selmer}, applying the main
theorem of \cite{U2}, one may show (under similar conditions) that
each $G_i$ contributes an element of order $\lambda^{s(i)}$ to
$H^1_f(\QQ,A_{\lambda}((k'+k-2)/2))$, but it does not show that
these elements are independent. However, using Hecke algebras as in
\cite{U}, it should be possible to show that $\lambda^n$ divides $\#
H^1_f(\QQ,A_{\lambda}((k'+k-2)/2))$, and this is covered by the
approach in \cite{AK}, so we leave it to them.


\begin{thebibliography}{AK}
\bibitem[AK]{AK} M. Agarwal, K. Klosin, Yoshida lifts and the
Bloch-Kato conjecture for the convolution $L$-function, submitted.
\bibitem[An]{An}A. N. Andrianov,  {\em Quadratic forms and Hecke
    operators}, Grundl. d. Math. Wiss.  286, Springer, Berlin 1987.
\bibitem[Ar]{Ar} T. Arakawa, Vector valued Siegel's modular forms of
degree two and the associated Andrianov $L$-functions, {\em
Manuscripta Math. }{\bf 44 }(1983), 155--185.
\bibitem[AS]{AS} M. Asgari, R. Schmidt, Siegel modular forms and
representations, {\em Manuscripta Math. }{\bf 104 }(2001), 173--200.
\bibitem[BFvdG]{BFvdG} J. Bergstr\"om, C. Faber, G. van der Geer, Siegel
modular forms of degree three and the cohomology of local systems,
arXiv:1108.3731v1[math:AG].
\bibitem[BK]{BK} S. Bloch, K. Kato, $L$-functions and Tamagawa numbers
of motives, The Grothendieck Festschrift Volume I, 333--400,
Progress in Mathematics, 86, Birkh\"auser, Boston, 1990.
\bibitem[B1]{BoeFJII} S. B\"ocherer, \"Uber die Fourier-Jacobi-Entwicklung
Siegelscher Eisensteinreihen II, {\em Math. Z. }{\bf 189 }(1985),
81--110.
\bibitem[B2]{B} S. B\"ocherer, Siegel modular forms and theta series,
{\em AMS Proc. Symp. Pure Math.,} Vol. 49 (1989), part 2, 3--17.
\bibitem[B3]{Boe} S. B\"ocherer,
\"Uber die Funktionalgleichung automorpher $L$-Funktionen zur
Siegelschen Modulgruppe, {\em J. f. d. reine u. angew. Math. }{\bf 362
}(1985), 146--168.
\bibitem[B4]{B4} S. B\"ocherer, \"Uber die Fourierkoeffizienten
der Siegelschen Eisensteinreihen, {\em Manuscripta math. }{\bf 45
}(1984), 273--288.
\bibitem[BKS]{BKS} S. B\"ocherer, H. Katsurada, R. Schulze-Pillot,
On the basis problem for Siegel modular forms with level, pp.13--28
in: Modular forms on Schiermonnikoog. Cambridge University Press
2008.
\bibitem[BSY]{BSY} S. B\"ocherer, T. Satoh, T. Yamazaki, On the
pullback of a differential operator and its application to vector
valued Eisenstein series, {\em Comment. Math. Univ. St. Paul. }{\bf
41 }(1992), 1--22.
\bibitem[BCG]{BCG} S. B\"ocherer, C. G. Schmidt, $p$-adic measures
attached to Siegel modular forms, {\em Ann. Inst. Fourier }{\bf 50
}(2000), 1375--1443.
\bibitem[BS1]{BS1} S. B\"ocherer, R. Schulze-Pillot, Siegel modular
forms and theta series attached to quaternion algebras, {\em Nagoya
Math. J. }{\bf 121 }(1991), 35--96.
\bibitem[BS2]{BS2} S. B\"ocherer, R. Schulze-Pillot,  The
  Dirichlet series of Koecher and Maa\ss\ and modular forms of weight
  $\frac{3}{2}$. {\em Math. Z.} {\bf 209} (1992), no. 2, 273--287.
\bibitem[BS3]{BS3} S. B\"ocherer, R. Schulze-Pillot, Siegel modular
forms and theta series attached to quaternion algebras, II, {\em
Nagoya Math. J. }{\bf 147 }(1997), 71--106.
\bibitem[BS4]{BS4} S. B\"ocherer, R. Schulze-Pillot, Vector valued
  theta series and Waldspurger's theorem.
  {\em Abh. Math. Sem. Univ. Hamburg}  {\bf 64}  (1994), 211--233.
\bibitem[BS5]{BS5} S. B\"ocherer, R. Schulze-Pillot, Mellin transforms of
vector valued theta series attached to quaternion algebras, {\em
Math. Nachr. }{\bf 169 }(1994), 31--57.
\bibitem[Br]{Br} J. Brown, Saito-Kurokawa lifts and applications to
the Bloch-Kato conjecture, {\em Compositio Math. }{\bf 143 }(2007),
290--322.
\bibitem[CF]{CF} C.-L. Chai, G. Faltings, {\em Degeneration of
Abelian Varieties, } Ergeb. Math. Grenzgeb. (3), Vol. 22, Springer,
Berlin, 1990.
\bibitem[De]{De} P. Deligne, Valeurs de Fonctions $L$ et P\'eriodes
d'Int\'egrales, {\em AMS Proc. Symp. Pure Math.,} Vol. 33 (1979),
part 2, 313--346.
\bibitem[Di]{Di} F. Diamond, The refined conjecture of Serre, {\em
in }Elliptic Curves, Modular Forms and Fermat's Last Theorem, J.
Coates, S.T. Yau, eds., 172--186, International Press, Cambridge MA,
1995.
\bibitem[DFG]{DFG} F. Diamond, M. Flach, L. Guo, The Tamagawa number conjecture
of adjoint motives of modular forms, {\em Ann. Sci. \'Ecole Norm.
Sup. (4) }{\bf 37 }(2004), 663--727.
\bibitem[Du1]{Du} N. Dummigan, Symmetric square $L$-functions and
Shafarevich-Tate groups, II, {\em Int. J. Number Theory }{\bf 5
}(2009), 1321--1345.
\bibitem[Du2]{Du2} N. Dummigan, Selmer groups for tensor product $L$-functions, pp. 37--46 in {\em Automorphic representations,
automorphic $L$-functions and arithmetic, } R.I.M.S. Kokyuroku 1659,
July 2009.
\bibitem[DIK]{DIK} N.Dummigan, T.Ibukiyama, H.Katsurada, Some Siegel modular
standard
$L$-values, and Tate-Shafarevich-Tate groups, {\em J.Number Theory}
{\bf  131}(2011), 1296-1330.
\bibitem[Ed]{Ed} B. Edixhoven, The weight in Serre's
conjectures on modular forms, {\em Invent. Math. }{\bf 109 }(1992),
563--594.
\bibitem[Ei]{E} M. Eichler, The basis problem for modular forms and the traces
of the Hecke operators, p. 76-151 in Modular functions of one
variable $I$, Lecture Notes Math. 320, Berlin-Heidelberg-New York
1973.
\bibitem[Ev]{evdo}  S.A. Evdokimov, Action of the irregular Hecke
operator of index $p$ on the theta series of a quadratic form, {\em
J. Sov. Math. }{\bf 38 }(1987), 2078--2081.
\bibitem[Fa]{Fa} G. Faltings, Crystalline cohomology and $p$-adic
Galois representations, {\em in }Algebraic analysis, geometry and
number theory (J. Igusa, ed.), 25--80, Johns Hopkins University
Press, Baltimore, 1989.
\bibitem[Fl]{Fl} M. Flach, A generalisation of the Cassels-Tate
pairing, {\em  J. f. d. reine u. angew. Math. }{\bf 412 }(1990), 113--127.
\bibitem[Fo1]{Fo1} J.-M. Fontaine, Le corps de p\'eriodes
$p$-adiques, {\em Ast\'erisque }{\bf 223 }(1994), 59--111.
\bibitem[Fo2]{Fo} J.-M. Fontaine, Valeurs sp\'eciales des
fonctions $L$ des motifs, S\'eminaire Bourbaki, Vol. 1991/92. {\em
Ast\'erisque }{\bf 206 }(1992), Exp. No. 751, 4, 205--249.
\bibitem[FL]{FL} J.-M. Fontaine, G. Lafaille, Construction de
repr\'esentations $p$-adiques, {\em Ann. Sci. E.N.S. }{\bf 15
}(1982), 547--608.
\bibitem[Fu]{Fu} M. Furusawa, On $L$-functions for
$\mathrm{GSp}(4)\times\mathrm{GL}(2)$ and their special values, {\em
 J. f. d. reine u. angew. Math. }{\bf 438 }(1993), 187--218.
\bibitem[GT]{GT} A. Genestier, J. Tilouine, Syst\`emes de
Taylor-Wiles pour $\GSp(4)$, {\em Ast\'erisque }{\bf 302 }(2005),
177--290.
\bibitem[Go]{Go} R. Godement, Seminaire Cartan 10 (1957/58) Exp.4-9.
\bibitem[GZ]{GZ} B. H. Gross, D. B. Zagier, Heegner points and
derivatives of $L$-series, {\em Invent. Math. }{\bf 84 }(1986),
225--320.
\bibitem[Ha]{Ha} G. Harder, A congruence between a Siegel and an
elliptic modular form, manuscript, 2003, {\em reproduced in } The
1-2-3 of Modular Forms (J. H. Bruinier et. al.), 247--262,
Springer-Verlag, Berlin Heidelberg, 2008.
\bibitem[Har]{Har} A. Haruki, Explicit formulae of Siegel Eisenstein
series, {\em Manuscripta math. }{\bf 92 }(1997), 107-134.
\bibitem[Has]{Has} K. Hashimoto, The dimension of the spaces of cusp
forms on Siegel upper half-plane of degree two, I, {\em J. Fac. Sci.
Univ. Tokyo, Sect. IA, Math. }{\bf 30 }(1983), 403--488.
\bibitem[Hi1]{Hi} H. Hida, {\em Modular forms and Galois cohomology. }
Cambridge University Press, 2000.
\bibitem[Hi2]{Hi2} H. Hida, {\em Geometric Modular Forms and
Elliptic Curves. } World Scientific, Singapore, 2000.
\bibitem[H-S]{H-S} H. Hijikata, H. Saito, On the representability
of modular
forms by theta series, p. 13-21 in Number Theory, Algebraic
Geometry and
Commutative Algebra, in honor of Y. Akizuki, Tokyo 1973
\bibitem[I1]{I} T. Ibukiyama, On differential operators on
automorphic forms and invariant pluri-harmonic polynomials, {\em
Comment. Math. Univ. Sanct. Paul. }{\bf 48 }(1999), 103--118.
\bibitem[I2]{I2} T. Ibukiyama, Dimension formulas of Siegel modular
forms of weight $3$ and supersingular abelian varieties, {\em in }
Siegel Modular Forms and Abelian Varieties (T. Ibukiyama, ed.),
Proceedings of the Fourth Spring Conference on Modular Forms and
Related Topics, Hamana Lake, Japan, 2007. Ryushido, 2007.
\bibitem[J-L]{J-L} H. Jacquet, R. Langlands, Automorphic forms on
$GL(2)$, Lect. Notes in Math. 114, Berlin-Heidelberg-New York 1970.
\bibitem[Ji]{Ji} J. Jia, Arithmetic of the Yoshida lift, preprint 2010,
\newline {\sf
http://www.math.lsa.umich.edu/~jxj/JohnsonJia/Research/Research.html}
\bibitem[KV]{K-V} M. Kashiwara, M. Vergne, On the Segal-Shale-Weil
representations and harmonic polynomials, Invent. Math. 44
(1987), 1-44
\bibitem[Ka]{Ka} H. Katsurada, Congruence of Siegel modular forms and
special values of their standard zeta functions, {\em Math. Z. }{\bf
259 }(2008), 97--111.
\bibitem[Koh]{kohnen} W. Kohnen,  Fourier coefficients of modular
  forms of half-integral weight.  Math. Ann.  271  (1985),  no. 2,
  237--268.
\bibitem[Koz]{Koz} N. Kozima, On special values of standard
$L$-functions attached to vector valued Siegel modular forms, {\em
Kodai Math. J. }{\bf 23 }(2000), 255--265.
\bibitem[La]{L} R. P. Langlands, Modular forms and $\ell$-adic
representations, pp. 362--499 in {\em Modular functions of one
variable II, } Lecture Notes in Math. Vol. 349, Springer, Berlin,
1973.
\bibitem[Lu]{luo} W. Luo, A note on the distribution of integer points
  on spheres, {\em Math. Z.}{\bf 267} (2011), 965--970.
\bibitem[Ma]{Ma} H.Maa{\ss}, Siegel's modular forms and Dirichlet series,
 Lecture Notes in Math.,
Vol. 216, Springer, Berlin, Heidelberg, New York 1971.
\bibitem[MW]{MW} B. Mazur, A. Wiles, Class fields of abelian
extensions of $\QQ$, {\em Invent. Math. }{\bf 76 }(1984), 179--330.
\bibitem[PS]{PS} A. Pitale, R. Schmidt, Ramanujan-type results for
Siegel cusp forms of degree 2, {\em J. Ramanujan Math. Soc. }{\bf 24
}(2009), 87--111.
\bibitem[Miz]{Miz} S.Mizumoto, Eisenstein series for Siegel modular groups,
{\em Math.Annalen}{\bf 297}(1993), 581-625
\bibitem[PY]{PY} C. Poor, D.S. Yuen, Dimensions of cusp
forms for $\Gamma_0(p)$ in degree two and small weights, {\em Abh.
Math. Sem. Univ. Hamburg. }{\bf 77 }(2007), 59--80.
\bibitem[R1]{R} K. Ribet, A modular construction of unramified
$p$-extensions of $\QQ(\mu_p)$, {\em Invent. Math. }{\bf 34 }(1976),
151--162.
\bibitem[R2]{R2} K. Ribet, On modular representations of
$\Gal(\Qbar/\QQ)$ arising from modular forms, {\em Invent. Math.
}{\bf 100 }(1990), 431--476.
\bibitem[R3]{R3} K. Ribet, Report on $\pmod{l}$ representations of
$\Gal(\Qbar/\QQ)$, {\em in }Motives, A.M.S. Proc. Symp. Pure Math.
{\bf 55}:2 (1994), 639--676.
\bibitem[Sa]{Sa} T. Satoh, On certain vector valued Siegel modular
forms of degree two, {\em Math. Ann. }{\bf 274 }(1986), 335--352.
\bibitem[Sc]{Sc} R. Schmidt, Iwahori-spherical representations of
$\GSp(4)$ and Siegel modular forms of degree $2$ with square-free
level, {\em J. Math. Soc. Japan }{\bf 57 }(2005), 259--293.
\bibitem[Shz]{Shz} H. Shimizu, Theta series and automorphic forms on
$GL_2$, {\em J. Math. Soc. Japan }{\bf 24 }(1972), 638--683.
\bibitem[Sh1]{Shi1} G. Shimura,
On Eisenstein series, {\em Duke Math. J. }{\bf 50 }(1983),
417--476.
\bibitem[Sh2]{Shi2} G. Shimura, On a class of nearly holomorphic automorphic
forms, {\em Annals of Math. }{\bf 123 }(1986), 347--406.
\bibitem[Sh3]{Shi3} G. Shimura, Nearly holomorphic functions on hermitian
symmetric spaces, {\em Math. Ann. }{\bf 278 }(1987), 1--28.
\bibitem[Sh4]{Sh} G. Shimura, The special values of the zeta
functions associated with cusp forms, {\em Comm. Pure Applied Math.
} {\bf 29 }(1976), 783--804.
\bibitem[SU]{SU} C. Skinner, E. Urban, Sur les d\'eformations
$p$-adiques de certaines repr\'esentations automorphes, {\em J.
Inst. Math. Jussieu  }{\bf 5 }(2006), 629--698.
\bibitem[St]{St} W. Stein, The Modular Forms Database: Tables,
\newline
{{\sf http://modular.fas.harvard.edu/Tables/tables.html}}
\bibitem[Tak]{Taka} H. Takayanagi, Vector valued Siegel modular forms and
their L-functions; application of a differential operator, {\em
Japan J. Math. }{\bf 19 }(1994), 251--297.
\bibitem[Tay]{Ta} R. Taylor, On the $\ell$-adic cohomology of Siegel
threefolds, {\em Invent. Math. }{\bf 114 }(1993), 289--310.
\bibitem[U1]{U} E. Urban, Selmer groups and the Eisenstein-Klingen
ideal, {\em Duke Math. J. }{\bf 106 }(2001), 485--525.
\bibitem[U2]{U2} E. Urban, On Residually Reducible Representations
on Local Rings, {\em J. Algebra }{\bf 212 }(1999), 738--742.
\bibitem[vdG]{vdG} G. van der Geer,
Siegel Modular Forms and Their Applications, {\em in } The 1-2-3 of
Modular Forms (J. H. Bruinier et. al.), 181--245, Springer-Verlag,
Berlin Heidelberg, 2008.
\bibitem[We1]{Weiss_vektor} R. Weissauer, Vektorwertige Siegelsche
Modulformen kleinen Gewichts, {\em  J. f. d. reine u. angew. Math. }{\bf 343 }
(1983), 184-202
\bibitem[We2]{We} R. Weissauer, Four dimensional Galois
representations, {\em Ast\'erisque }{\bf 302 }(2005), 67--150.
\bibitem[We3]{We2} R. Weissauer, Endoscopy for $\GSp(4)$ and the
Cohomology of Siegel Modular Threefolds, Lecture Notes in Math.,
Vol. 1968, 368 pp., Springer, Berlin, 2009.
\bibitem[We4]{We4} R. Weissauer, Existence of Whittaker models
related to four dimensional symplectic Galois representations.
Modular forms on Schiermonnikoog, 285--310, Cambridge University
Press, Cambridge, 2008.
\bibitem[Y1]{Y1} H. Yoshida, Siegel's modular forms and the arithmetic
of quadratic forms, {\em Invent. Math. }{\bf 60 }(1980), 193--248.
\bibitem[Y2]{Y2} H. Yoshida, The action of Hecke operators on
theta series, p. 197-238 in: Algebraic and topological theories - to
the memory of Dr. T. Miyata -, 1985.
\end{thebibliography}
\end{document}